\documentclass[reqno,11pt]{amsart}

\usepackage{amsthm, amsmath, amssymb, stmaryrd}
\usepackage[utf8]{inputenc}
\usepackage[T1]{fontenc}

\usepackage[hyphenbreaks]{breakurl}
\usepackage[hyphens]{url}
\usepackage{systeme}
\usepackage[shortlabels]{enumitem}
\usepackage[hidelinks]{hyperref}
\usepackage{microtype}
\usepackage{subcaption}
\usepackage{pgfplots}
\usepackage{cite}
\pgfplotsset{compat=1.18}

\usepackage{bm}
\usepackage[margin=1in]{geometry}

\usepackage[textsize=scriptsize,backgroundcolor=orange!5]{todonotes}

\usepackage[noabbrev,capitalize,sort,nameinlink]{cleveref}
\crefname{equation}{}{}
\numberwithin{equation}{section}

\usepackage{mathtools}
\newtheorem{theorem}{Theorem}[section]
\newtheorem{proposition}[theorem]{Proposition}
\newtheorem{lemma}[theorem]{Lemma}

	\crefname{claim}{Claim}{Claims}

\theoremstyle{definition}
\newtheorem{definition}[theorem]{Definition}

\newtheorem{example}[theorem]{Example}

\theoremstyle{remark}
\newtheorem*{remark}{Remark}

\newcommand{\abs}[1]{\left\lvert#1\right\rvert}

\makeatletter
\renewcommand{\paragraph}{%
  \@startsection{paragraph}{4}{0pt}%
    {1.5ex}
    {-1em}
    {\normalfont\normalsize\bfseries}%
}
\makeatother

\DeclareMathOperator{\ch}{char}

\newcommand{\FF}{\mathbb{F}}
\newcommand{\RR}{\mathbb{R}}
\newcommand{\NN}{\mathbb{N}}
\newcommand{\ZZ}{\mathbb{Z}}
\newcommand*{\PP}{\mathbb{P}} 

\newcommand{\cL}{\mathcal L}

\newcommand{\cP}{\mathcal P}

\newcommand{\fm}{\mathfrak{m}}
\newcommand{\fp}{\mathfrak{p}}

\newcommand{\Hasse}{\mathsf H} 

\newcommand{\cO}{\mathcal O}

\newlength{\hght}

\makeatletter
\newcommand\thankssymb[1]{\textsuperscript{\@fnsymbol{#1}}}
\makeatother

\author[Hung-Hsun Hans Yu]{Hung-Hsun Hans Yu\thankssymb{1}}

\thanks{\thankssymb{1}Department of Mathematics, Princeton University, Princeton, NJ 08544\@. Email: {\tt hansonyu@princeton.edu}}

\title{Two proofs and a tight Szemer\'edi--Trotter theorem}
\begin{document}
\begin{abstract}
The Szemer\'edi--Trotter theorem, a fundamental theorem in incidence geometry, states that $n$ points and $m$ lines in $\mathbb{R}^2$ form at most $O(m+n+m^{2/3}n^{2/3})$ incidences.
Recently, Lewko and independently Miao--Xie extended the Szemer\'edi--Trotter theorem to arbitrary fields with a correction term $O(\frac{mn}{\mathfrak{p}})$ if the field has positive characteristic $\fp$.

In this paper, we provide some geometric intuition for the two proofs and compare them.
Moreover, we show how both proofs can be improved using Hasse derivatives.
As a consequence, we obtain a tight upper bound over any algebraically closed field.
To be specific, when the field has positive characteristic $\mathfrak{p}$, $n\leq m$ and $\mathfrak{p}^{2(k-1)}\leq m<\mathfrak{p}^{2k}$ for some positive integer $k$, the number of incidences can be bounded by
\[O\left(\mathfrak{p}^{k-1}n+m+m^{2/3}n^{2/3}+\frac{mn}{\mathfrak{p}^{k}}\right).\]
\end{abstract}
\maketitle
\vspace{-1cm}
\section{Introduction}
Let $\FF$ be any field, $\cP$ be a set of points and $\cL$ be a set of lines in $\FF^2$.
A fundamental problem in incidence geometry is to understand the number of incidences $I(\cP,\cL)$ between $\cP$ and $\cL$, which is the number of pairs $(p,\ell)\in \cP\times \cL$ with $p\in \ell$.

For $\FF = \RR$, the Szemer\'edi--Trotter theorem states that the number of incidences can be upper bounded, up to a constant, by $\abs{\cP}+\abs{\cL}+\abs{\cP}^{2/3}\abs{\cL}^{2/3}$.
Previous proofs heavily depend on the topology of the real numbers and do not generalize to other fields easily.
The methods do not generalize for a good reason, as the bound does not hold verbatim over any field of positive characteristic: indeed, taking the $q^2$ points and $q(q+1)$ lines in $\FF_q^2$ gives $q^2(q+1)$ incidences, whereas the Szemer\'edi--Trotter bound gives only $O(q^{8/3})$.
However, any such constructions contain many points and lines compared to the characteristic of $\FF$, and it has been a question whether the Szemer\'edi--Trotter bound holds for much smaller sets of points and lines (for example, see the remark after Theorem 6.2 in \cite{BKT04}).

Recently, Lewko \cite{Lew26} and independently Miao and Xie \cite{MX26} proved that a correction term of $\abs{\cP}\abs{\cL}/\ch\FF$ is sufficient to make the Szemer\'edi--Trotter theorem hold for fields of positive characteristic, answering the previous question affirmatively.
Crucially, neither proof relies on the topology of the underlying field, which distinguishes it from all previous proofs.
For more background and consequences of the theorem, we refer the readers to their papers \cite{Lew26,MX26}.

For our convenience, we set $\fp$ to be positive infinity if the characteristic of $\FF$ is zero, and set $\fp=\ch\FF$ otherwise.
Their result can be stated as follows.
\begin{theorem}[Szemer\'edi--Trotter over arbitrary fields \cite{Lew26,MX26}]\label{thm:st}
    Let $\cP$ be a set of points and $\cL$ be a set of lines in $\FF^2$.
    Then
    \[I(\cP,\cL)\lesssim \abs{\cP}+\abs{\cL}+\abs{\cP}^{2/3}\abs{\cL}^{2/3}+\frac{\abs{\cP}\abs{\cL}}{\fp}.\]
\end{theorem}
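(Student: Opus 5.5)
We will prove \cref{thm:st} by the polynomial method, organized around a degree-reduction step that works over any field as long as the degree stays below $\fp$. Duality lets us assume $\abs{\cP}\le\abs{\cL}$. Standard Cauchy--Schwarz bounds handle the ranges $\abs{\cL}\ge\abs{\cP}^2$ or $\abs{\cP}\ge\abs{\cL}^2$, where the bound is linear. So the interesting range is $\abs{\cP}\le \abs{\cL}\le \abs{\cP}^2$. Set $n=\abs{\cP}$, $m=\abs{\cL}$ and $D\approx m^{2/3}n^{-1/3}$. This is the degree at which the real proof balances.

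\textbf{Step 1: reduction when the degree is at least $\fp$.} If $D\gtrsim \fp$, then $mn/\fp\gtrsim m^{2/3}n^{2/3}$ fails to be the dominant term only when $D\lesssim \fp$. In the regime $D\gtrsim\fp$ we want $I\lesssim mn/\fp+m+n$. We get this by random sampling. Keep each line independently with probability $q\approx \fp/D$, so the sampled configuration has degree parameter about $\fp$. Apply the $D<\fp$ case to the sample, then rescale. Because $m^{2/3}$ scales by $q^{2/3}$ while the incidence count scales by $q$, this yields exactly the $mn/\fp$ term. So it suffices to treat $D\le c\fp$.

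\textbf{Step 2: the core incidence bound for $D<\fp$, without cell decomposition.} Over a general field there is no ham-sandwich partition, so we follow the vanishing-polynomial approach. Let $\cL'$ be the lines containing at least $k\approx D$ points of $\cP$; we aim to show $I(\cP,\cL')\lesssim n+ m^{2/3}n^{2/3}$. The lines with fewer than $k$ points contribute at most $mk\approx m^{2/3}\cdot m n^{-1/3}$, which needs to be balanced. To make this balance work we use dyadic pigeonholing on richness $r$, since the lines of richness $r$ number at most $\lesssim n^2/r^3+n/r$ is exactly what we want to prove.

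We fix $r$ and take a nonzero polynomial $f$ of degree $d\approx \sqrt{n}/\,$(something) vanishing on $\cP$, or rather to high order. The Lewko/Miao--Xie idea, as suggested by the abstract, is to use derivatives. Find $f$ of degree $d<\fp$ vanishing on all points, and consider the $r$-rich lines. Each such line, once $r>d$, lies in $Z(f)$. Then the points on many rich lines are singular-like: at a point where two rich lines meet, both directional derivatives of $f$ vanish. In characteristic $\ge d$ the derivative $\partial f$ is nonzero of degree $d-1$, and Bézout-type counting applies.

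Counting lines inside $Z(f)$ gives at most $d$ lines. Iterating the argument with $\partial_x f,\partial_y f$ bounds the number of rich lines by $\lesssim n/r\cdot(\text{stuff})$. Using the trivial bound $I\le n+\binom{\#\text{lines}}{2}$ on the remaining part, we obtain $\#\{r\text{-rich lines}\}\lesssim n^2/r^3+n/r$. Summing dyadically then gives $m^{2/3}n^{2/3}+m+n$.

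\textbf{Main obstacle.} The hard step is the rich-lines estimate $n^2/r^3$ without a cellular partition. We would first try the Guth--Katz style argument: choose $f$ of degree $\approx n/r^2$ vanishing on a random subset of the points, of density $\approx 1/r$. Each rich line then contains more than $\deg f$ chosen points, hence lies in $Z(f)$. So the number of rich lines is at most $\deg f\approx n/r^2$, up to the sampling loss. The requirement $\deg f<\fp$ is where the correction term enters, and it is handled by Step 1. Where the simple version loses a factor, we would vanish to higher order and use Hasse-type derivatives so that multiplicities still count correctly in characteristic $\fp$.
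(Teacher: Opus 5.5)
There is a genuine gap. Step 2 carries all the content of the theorem, and it does not work; separately, Step 1 reduces to a statement that is false.

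\textbf{Why Step 2 fails.}
An interpolating $f\in\FF[x,y]$ vanishing on $\cP$ only has guaranteed degree about $\sqrt{2\abs{\cP}}$. For an $N\times N$ grid, no nonzero polynomial of degree below $N=\abs{\cP}^{1/2}$ vanishes on it. Since $Z(f)$ contains at most $\deg f$ lines, your argument only captures $r$-rich lines with $r>\deg f\gtrsim\abs{\cP}^{1/2}$. That is exactly the range where $\abs{\cL_r}\lesssim\abs{\cP}/r$ already follows from elementary counting.
\begin{itemize}
\item The singular-point remark concerns only those at most $\deg f$ lines inside $Z(f)$. ``Iterating with $\partial_xf,\partial_yf$'' supplies no mechanism for $r\lesssim\abs{\cP}^{1/2}$.
\item The sampling fix is miscalculated. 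At density $1/r$, an $r$-rich line contains about one sampled point, far fewer than $\deg f$, so nothing forces it into $Z(f)$.
\item The conclusion $\abs{\cL_r}\lesssim\abs{\cP}/r^2$ is false: in characteristic zero an $N\times N$ grid has $\gtrsim N^4/r^3$ lines that are $r$-rich for $2\le r\le cN$.
\end{itemize}

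\textbf{Why the reduction targets a false statement.}
With $D=m^{2/3}n^{-1/3}$, the uncorrected bound you aim to prove for $D<c\fp$ fails. Take $\cP=\FF_\fp^2$ and $\cL$ the $\fp^2$ non-vertical lines over $\FF_\fp$. Then $D=\fp^{2/3}$, but $I=\fp^3\gg(mn)^{2/3}=\fp^{8/3}$. The correct crossover is $(mn)^{1/3}\approx\fp$. Likewise, the low-richness cutoff should be $k\approx n^{2/3}m^{-1/3}$, not $k\approx D$; the latter costs $mk=m^{5/3}n^{-1/3}$.

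\textbf{The missing idea.}
Both Lewko and Miao--Xie lift to $\FF^3$: the line $\{y=ax+b\}$ goes to $\Tilde{\ell}=\{(t,at+b,a)\}$. The $r$ lines through $p$ then meet the vertical line $\ell_p$ at $r$ distinct points.
\begin{itemize}
\item You interpolate with spatial degree $d$ and slope degree $e$ controlled \emph{separately}. Since $\binom{d+2}{2}(e+1)>(d+1)\abs{\cL}$ once $(d+2)(e+1)>2\abs{\cL}$, you can take $e\approx2\abs{\cL}/d$.
\item In Lewko's route, \cref{lemma:incidence-to-mult} gives $\mu_f(p,s)\ge r(p)-e$ for every slope $s$.
\item B\'ezout applied to $f_s$ and $\partial_xf_s$ in a generic slice bounds $\sum_p\max\{\mu_f(p,s)-1,0\}$ by $d(d-1)$. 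This is the only place the characteristic enters, through $d<\fp$, which ensures $\partial_x$ annihilates no factor.
\item The at most $e$ points whose regulus $R_p$ lies in $Z(f)$ are handled trivially.
\end{itemize}
This gives $I\le\abs{\cP}+\abs{\cL}+d(d-1)+2\abs{\cP}\abs{\cL}/d$ for all $1\le d<\fp$. Choosing $d=(\abs{\cP}\abs{\cL})^{1/3}$ when this is below $\fp$, and $d$ close to $\fp$ otherwise, yields the theorem directly, with no sampling or dyadic decomposition. Miao--Xie instead impose high-order vanishing along every $\ell(p,s)$ and apply bihomogeneous B\'ezout to $f$ and $(\partial_x+s\partial_y)f$. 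Hasse derivatives are not needed for this theorem; they only enter for \cref{thm:main}.
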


Lewko \cite{Lew26} showed that all four terms are necessary in the regime $\abs{\cP},\abs{\cL}\leq \fp^2$, leaving the regime $\max(\abs{\cP},\abs{\cL})>\fp^2$ open.

The purpose of this paper is twofold: first, to explain and compare the two proofs of \cref{thm:st}; second, to show how Lewko's proof can be modified to obtain the following improvement that is tight in all regimes when $\FF$ is algebraically closed.
We set $\fp^0=1$ if $\fp=\infty$ throughout the paper.

\begin{theorem}[Improved Szemer\'edi--Trotter over arbitrary fields]\label{thm:main} Let $\cP$ be a set of points and $\cL$ be a set of lines, and suppose that $\abs{\cP}\leq \abs{\cL}$.
Suppose that $k$ is a positive integer such that $\fp^{2(k-1)}\leq \abs{\cL}<\fp^{2k}$.
Then
\[I(\cP,\cL)\lesssim \fp^{k-1}\abs{\cP}+\abs{\cL}+\abs{\cP}^{2/3}\abs{\cL}^{2/3}+\frac{\abs{\cP}\abs{\cL}}{\fp^{k}}.\]
    
\end{theorem}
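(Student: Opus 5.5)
Since this theorem is presented as an improvement of Lewko's argument, I will follow that route: a polynomial method argument. The original argument finds a low-degree polynomial vanishing on the lines (or the dual points) to high multiplicity. It then uses the fact that a polynomial vanishing to order $r$ at many points of a line vanishes on the line. In characteristic $\fp$, ordinary derivatives fail once the multiplicity exceeds $\fp$, because $\partial^{\fp}$ kills everything. That failure is the source of the correction term $\abs{\cP}\abs{\cL}/\fp$. The improvement replaces ordinary derivatives by Hasse derivatives $\Hasse^{(i,j)}$, for which vanishing to order $r$ is meaningful for all $r$ and the Leibniz/chain rule for restriction to a line holds.

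\textbf{Step 1 (reductions).} By duality and the standard Cauchy--Schwarz bound $I\lesssim \abs{\cP}\abs{\cL}^{1/2}+\abs{\cL}$, I may assume the points and lines are in the nontrivial range. I would pass to the algebraic closure, which changes nothing. I would then dyadically restrict to points of richness about $r$, i.e.\ each point lies on about $r$ lines of $\cL$. The goal is then the contrapositive: if $r\gg \max(\fp^{k-1},\abs{\cL}\abs{\cP}^{-1/3}\abs{\cL}^{-1/3}\ldots)$, the rich points are few, namely $\abs{\cP}\lesssim \abs{\cL}^2/r^3+\abs{\cL}/r$, provided $r\gtrsim \abs{\cL}/\fp^k$ fails.

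\textbf{Step 2 (polynomial construction with Hasse multiplicity).} By parameter counting, I would find a nonzero polynomial $f$ of degree $D$ vanishing with Hasse multiplicity $s$ at every point of $\cP$. This requires $\binom{s+1}{2}\abs{\cP}<\binom{D+2}{2}$, i.e.\ $D\approx s\abs{\cP}^{1/2}$. For a line $\ell\in\cL$ containing about $t$ points, the restriction $f|_\ell$ is a univariate polynomial of degree $D$ with $t$ roots of multiplicity $s$. Here the Hasse-derivative chain rule is what makes restriction preserve multiplicity. If $ts>D$, then $f$ vanishes on $\ell$. Moreover, for each $j<s$, the Hasse derivatives $\Hasse^{(i,j-i)}f$ vanish to order $s-j$ at the points and have degree $D-j$. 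Iterating, rich lines are contained in $Z(f)$ together with their tangential data, and a point lying on $\gtrsim D/s$ lines through which $f$ vanishes to order $s$ forces high vanishing of $f$ along many directions there. Counting lines in $Z(f)$ (at most $D$) against incidences then gives the Szemer\'edi--Trotter bound in the usual Lewko fashion. The Hasse derivative identities fail to control multiplicity only through Lucas-type binomial coefficients. These vanish mod $\fp$ exactly when the multiplicity crosses powers of $\fp$.

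\textbf{Step 3 (choice of multiplicity and the obstacle).} The constraint from characteristic is that the argument works for multiplicities $s$ up to $\fp^k$, rather than up to $\fp$. Powers $x^{\fp^k}$ are the obstruction: they are additive, so $f=\prod(\text{linear})^{\fp^k}$-type examples vanish highly but degenerately. I would choose $s\approx\min(\fp^k, \text{optimal S--T choice})$. This makes the error $\abs{\cP}\abs{\cL}/\fp^k$ appear when the optimal $s$ exceeds $\fp^k$. The term $\fp^{k-1}\abs{\cP}$ appears from the case where degree $D\lesssim\fp^{k-1}$ polynomials cannot separate; there I instead bound trivially using that $\abs{\cL}\ge\fp^{2(k-1)}$ lines fit. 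The main obstacle, and the step I would spend most effort on, is proving the key lemma: if a point $p$ lies on $\gtrsim D/s$ lines each contained in $Z(f)$, with $s<\fp^k$, then $f$ vanishes to Hasse order $\gtrsim s$ at $p$ in a way that propagates. Equivalently, a homogeneous form of degree $<\fp^k$ in characteristic $\fp$ vanishing on many directions, with multiplicities, must vanish. I would try to prove it by writing the leading form in base-$\fp$ digits and using Lucas's theorem on $\Hasse$ coefficients.
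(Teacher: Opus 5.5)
\textbf{There is a genuine gap in Step 2.} Your polynomial lives in the plane and vanishes to Hasse order $s$ at the points. The count $\binom{s+1}{2}\abs{\cP}<\binom{D+2}{2}$ forces $D\approx s\abs{\cP}^{1/2}$, so the criterion $ts>D$ for a line to lie in $Z(f)$ becomes $t\gtrsim \abs{\cP}^{1/2}$ \emph{independently of $s$}. Multiplicity therefore buys nothing. The observation ``at most $D$ lines in $Z(f)$'' only controls lines with $\gtrsim\abs{\cP}^{1/2}$ points, which is K\H{o}v\'ari--S\'os--Tur\'an strength. In the extremal regime $\abs{\cP}=\abs{\cL}=N$ with about $N^{1/3}$ points per line, no line is forced into $Z(f)$ and the argument gives nothing. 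There is no planar ``Lewko fashion'' to appeal to.

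The missing idea is the lift $\{y=ax+b\}\mapsto\Tilde{\ell}=\{(t,at+b,a)\}\subseteq\FF^3$. One takes a polynomial $f\in\FF[x,y,s]$ whose spatial degree $d$ and slope degree $e$ are controlled separately, with $(d+2)(e+1)>2\abs{\cL}$, and which vanishes on the lifted \emph{lines}; each line costs only $d+1$ conditions. If $p$ lies on $r$ lines, $f$ vanishes identically on $\ell(p,a)$ for $r$ slopes $a$. Since the $t^i$-coefficient of $f(p_x+t,p_y+st,s)$ has $s$-degree at most $e+i$, it vanishes for all $i<r-e$. So $f$ has multiplicity at least $r-e$ along $\ell(p,s)$ for \emph{every} $s$ (\cref{lemma:incidence-to-mult}). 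One then bounds the total excess multiplicity in a generic direction by $d(d-1)$ via B\'ezout, obtaining $I(\cP,\cL)\le(e+c_k)\abs{\cP}+\abs{\cL}+d(d-1)$ with $c_k=1+\fp+\cdots+\fp^{k-1}$. Balancing $e\approx 2\abs{\cL}/d$ against $d^2$ is what produces $\abs{\cP}^{2/3}\abs{\cL}^{2/3}$, and nothing in your outline supplies such a trade-off.

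\textbf{The characteristic-dependent step is also misidentified.} Your proposed key lemma (a binary form of degree $<\fp^k$ vanishing on many directions must vanish) holds in every characteristic: a nonzero binary form of degree $m$ over an algebraically closed field is a product of $m$ linear forms. So $\fp^k$ cannot enter there, and indeed the incidence-to-multiplicity step is characteristic-free. Characteristic enters when bounding the excess. The bound $\sum_p\max\{\mu^{\textup{hor}}_g(p)-1,0\}\le d(d-1)$ for square-free $g$ with no factor in $y$ alone fails once $d\ge\fp$: every zero of $x^{\fp}-y$ has horizontal multiplicity $\fp$.

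The paper's fix (\cref{lemma:modified-bound-sum-mult}) works as follows.
\begin{enumerate}
\item It works over $K=\FF(s)$ rather than a fixed slice, since specialization can destroy square-freeness (e.g.\ $x^{\fp}-s$).
\item It groups the irreducible factors $h$ of $g$ by the least $i$ with $\Hasse^{\fp^i}_x h\not\equiv 0$; such $i<k$ exists by Lucas's theorem since $\deg h<\fp^k$.
\item It shows each group $g_i$ is coprime to $\Hasse^{\fp^i}_x g_i$ and applies B\'ezout with threshold $\fp^i$.
\end{enumerate}
This gives $\sum_p\max\{\mu^{\textup{hor}}_g(p)-c_k,0\}\le d(d-1)$ for $d<\fp^k$.

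The per-point loss $c_k$ is the actual source of the $\fp^{k-1}\abs{\cP}$ term, not a trivial bound ``when degree-$\fp^{k-1}$ polynomials cannot separate.'' The cap $d<\fp^k$ is the source of $\abs{\cP}\abs{\cL}/\fp^k$. The hypothesis $\fp^{2(k-1)}\le\abs{\cL}<\fp^{2k}$ plays no role in the upper-bound argument beyond selecting the $k$ for which the bound is tight.
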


\subsection{Comparison of the two proofs}
The two proofs by Lewko \cite{Lew26} and Miao--Xie \cite{MX26} share a lot in common at the beginning.
They both proceed by lifting the lines from $\FF^2$ to $\FF^3$ and constructing polynomials of small degrees vanishing on all the lines.
However, the two proofs take very different routes after this.

In fact, instead of directly proving \cref{thm:st}, Miao and Xie bound the number of $r$-rich lines for a certain range of $r$.
Here, a line is \emph{$r$-rich} for a set of points $\cP$ if it contains at least $r$ of the points in the set.
\cref{thm:st} follows from their bound on the number of $r$-rich lines via a standard computation.
The bound on the number of $r$-rich lines that Miao and Xie prove is the following.
\begin{theorem}\label{thm:mx-bound-rich-line}
    Let $\cP$ be a set of points in $\FF^2$.
    Assume that $r$ is a positive integer satisfying $r\geq \max(2,\frac{12\abs{\cP}}{\fp})$.
    Let $\cL_r$ be the set of $r$-rich lines for $\cP$.
    Then
    \[\abs{\cL_r}\leq \frac{16\abs{\cP}^2}{r^3}+\frac{44\abs{\cP}}{r}.\]
\end{theorem}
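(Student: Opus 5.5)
Let $P=\abs{\cP}$ and $L=\abs{\cL_r}$. The plan follows the lifting approach both proofs share: lift the rich lines to $\FF^3$ and run a polynomial-method argument that uses only vanishing and degree, so it works over any field. Positive characteristic enters only where a multiplicity or derivative argument needs a degree smaller than $\fp$, which is what $r\geq 12P/\fp$ buys. We may assume $L$ is large, say $L\geq CP/r$ and more, since otherwise the term $44P/r$ already suffices. After passing to a subset we also assume $L\approx 16P^2/r^3$ up to constants, and derive a contradiction. The alternative, a direct double count of pairs of points on common lines, gives only $L\lesssim P^2/r^2$, so polynomials are genuinely needed.

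\textbf{Step 1 (lifting).} Each point $p=(a,b)\in\cP$ is lifted to a vertical line $\{(a,b,t)\}$, or a point is assigned a third coordinate so that the incidence structure becomes a configuration of lines in $\FF^3$. The natural choice is a Guth--Katz style encoding: each line $\ell\in\cL_r$ becomes a line $\tilde\ell\subset\FF^3$, and each point $p$ becomes a line $\tilde p$ through the lifted data. These are arranged so that $p\in\ell$ exactly when $\tilde p$ and $\tilde\ell$ meet. We then have $L$ lifted lines, each meeting at least $r$ of the $P$ lifted point-lines.

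\textbf{Step 2 (polynomial of small degree).} By parameter counting, choose a nonzero polynomial $f$ of degree $d\approx \sqrt{P}$ (or $d\approx P/r$ in the relevant range) vanishing on all lifted point-lines. This needs $\binom{d+3}{3}>P(d+1)$, which gives $d\sim\sqrt{P}$. Each lifted rich line meets $Z(f)$ in at least $r$ points. If $r>d$, that line lies in $Z(f)$. The condition $r\gtrsim P/r$, i.e.\ $r^2\gtrsim P$, is exactly the regime where $16P^2/r^3$ is the relevant term; the complementary regime is handled by the crude bound.

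\textbf{Step 3 (flecnodal/ruled argument).} Now $Z(f)$ contains $L$ lines, each meeting many other lines in the surface. We apply a Cayley--Salmon/Monge type statement: a surface of degree $d$ containing more than $\sim d^2$ lines with many intersections must have a ruled component, or more simply a plane or regulus component. In characteristic $\fp$ the flecnodal polynomial has degree $11d-24$, and the theorem is valid when $d<\fp$. This is where $r\geq 12P/\fp$ is used: it forces $d\lesssim P/r<\fp/12$, so $11d<\fp$. Non-ruled components then contain at most $11d^2$ lines. Ruled components are doubly ruled (planes or reguli), which correspond to concurrency or pencil structures in $\FF^2$. There each line can be counted directly: a regulus with $s$ point-lines and $t$ rich lines gives at most $st$ incidences, with $t\lesssim s/r\cdot$ const. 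Summing over components with total degree $\leq d$ gives $L\lesssim d^2+P/r$. Substituting $d\sim P/r^{3/2}$ after pruning yields $16P^2/r^3+44P/r$.

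The main obstacle is Step 3. We must make the Cayley--Salmon theorem work in characteristic $\fp$ under the hypothesis $11d<\fp$, checking that the Hasse-derivative version of flecnode vanishing behaves correctly. We must also arrange the degree choice so that $d<\fp/12$ follows precisely from $r\geq 12P/\fp$. If this fails, the fallback is a more elementary second step: choose $f$ vanishing to order two along the lifted lines and use the fact that derivatives of order $<\fp$ are nonzero to force $f$ to vanish on the rich lines. That bypasses flecnodes, at the cost of worse constants.
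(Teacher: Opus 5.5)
Your proposal has a genuine gap: the polynomial step never reaches the range of $r$ where the theorem has content.

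\textbf{The degree in Step 2 is too large.} With $P=\abs{\cP}$, Step 2 produces $f$ of total degree $d$ vanishing on the vertical lines $\ell_p$. The count $\binom{d+3}{3}>P(d+1)$ forces $d\gtrsim\sqrt{6P}$, and your variant $d\approx P/r$ must satisfy the same count. A lifted rich line then contains $r$ zeros of $f$, which puts it inside $Z(f)$ only if $r>d\gtrsim\sqrt{P}$. But for $r\gtrsim\sqrt{P}$ one has $P^2/r^3\lesssim P/r$, and the greedy count already gives $\abs{\cL_r}<2P/r$, so this is the trivial case.

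\textbf{The regime analysis is reversed.} You claim the complementary range $r\lesssim\sqrt{P}$ is handled by the crude bound. Double counting pairs of points only gives $P^2/r^2$, which is off by a factor of $r$. That range is exactly where the $P^2/r^3$ term must be earned.

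\textbf{Step 3 does not repair this.} A Cayley--Salmon count of $11d^2$ with $d\sim\sqrt P$ yields at best $L\lesssim P$. Nothing in the argument produces $d\sim P/r^{3/2}$: discarding rich lines does not lower the degree needed to vanish on all $P$ point-lines. Moreover, the ruled components that actually arise are not planes or reguli. For any curve $\underline{h}(a,b)=0$ in the dual plane, the surface $\underline{h}(s,y-sx)=0$ is swept out by the lifted lines $\{(t,at+b,a)\}$ with $\underline{h}(a,b)=0$. Such a surface is in general singly ruled and can have large degree. Bounding the rich lines it contains is a separate problem (rich points on a curve). That, not a flecnode computation, is where the characteristic enters.

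\textbf{The missing idea} is to measure degree asymmetrically, and to replace plain vanishing on $\ell_p$ by high multiplicity along the moving lines $\ell(p,s)$.
\begin{enumerate}[(i)]
\item The paper takes $\deg_{x,y}f\le d$ and $\deg_sf\le e$, and demands $\mu_f(p,s)\ge m$ for all $p\in\cP$ and all $s\in\FF$.
\item The coefficient of $t^i$ in $f(p_x+t,p_y+st,s)$ has $s$-degree at most $e+i$. So this costs only $(e+\frac{m+1}{2})m$ linear conditions per point.
\item The condition $rm>d$ then forces $f$ to vanish on every lifted rich line, even though $d\approx 4P/r$ is much larger than $r$.
\item Take $e=\lceil 2P/r^2\rceil$, $m=2e$, $d=rm-1$, and let $D=\partial_x+s\partial_y$. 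Let $f_{\textup{normal}}$ be the product of the irreducible factors $h$ of the square-free $f$ with $Dh\not\equiv0$. The lifted lines in its zero set are counted by bihomogeneous B\'ezout applied to $f_{\textup{normal}}$ and $Df_{\textup{normal}}$. This gives about $2de\approx 16P^2/r^3$, rather than $d^2$.
\item The factors with $Dh\equiv 0$ are exactly the singly ruled surfaces described above. Since $\deg_{x,y}h\le d<12P/r\le\fp$, each such $h$ descends to $\underline{h}(a,b)=h(0,b,a)$ of degree $\deg_sh$. Point-line duality plus planar B\'ezout then bounds their rich lines by $P\deg_sh/(r-1)$.
\end{enumerate}
Together these give $\abs{\cL_r}<4re^2+2re\le 16P^2/r^3+44P/r$ for $r\le 2\sqrt{P}$, and the greedy argument covers $r>2\sqrt{P}$.
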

We remark that Miao and Xie did not make the constants explicit in their argument.
We choose to optimize the constant for $\abs{\cP}^2/r^3$ for readers to compare with Lewko's bound.

On the other hand, Lewko directly deals with the number of incidences and proves the following parametrized inequality.
\begin{theorem}\label{thm:lewko-st-with-param}
    Let $\cP$ be a set of points and $\cL$ be a set of lines in $\FF^2$.
    For any real number $ 1\leq d<\fp$, we have
    \[I(\cP,\cL)\leq \abs{\cP}+\abs{\cL}+d(d-1)+\frac{2\abs{\cP}\abs{\cL}}{d}.\]
\end{theorem}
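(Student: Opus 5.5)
The plan is to follow the lifting strategy described in the comparison subsection. Each non-vertical line $\ell\colon y=ax+b$ is sent to its Legendrian lift $\tilde\ell=\{(x,ax+b,a)\}\subset\FF^3$. A point $p=(x_0,y_0)$ is sent to the vertical fibre $V_p=\{(x_0,y_0,t)\}$. Then $p\in\ell$ exactly when $\tilde\ell$ meets $V_p$, and it does so at $t=a$ with direction $(1,a,0)$. Vertical lines, and the trivial bound $\abs{\cP}+\abs{\cL}$, absorb the degenerate incidences: pairs of lines meet in at most one point, so lines with at most one incidence, or points on at most one line, contribute at most $\abs{\cP}+\abs{\cL}$.

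\textbf{Step 1: the reduction.} I will first make a dyadic or pigeonhole reduction. If $I(\cP,\cL)>\abs{\cP}+\abs{\cL}+2\abs{\cP}\abs{\cL}/d$, then after discarding points of low degree we get a subconfiguration $\cL'\subseteq\cL$ in which every line carries more than roughly $\abs{\cP}/d$ points. Equivalently, in the dual picture every point of a subset $\cP'$ lies on more than about $\abs{\cL}/d$ lines. The goal is to contradict this unless $I\le d(d-1)+\dots$. The factor $2$ should come from the standard trick of deleting points of degree below half the average and keeping at least half the incidences.

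\textbf{Step 2: the interpolating polynomial.} Next I find a nonzero $Q\in\FF[x,y,z]$ of degree at most $d$ in $z$ (and low degree in $x$) vanishing on all lifted lines $\tilde\ell$, $\ell\in\cL'$. This follows from linear algebra: the number of monomials must exceed the number of linear conditions, and each line imposes at most $(\text{degree})+1$ conditions. Restricted to $V_p$, the polynomial $Q(x_0,y_0,t)$ has degree at most $d$ in $t$ and vanishes at every slope $a$ of a line through $p$. Differentiating along $\tilde\ell$ shows that $Q_x+tQ_y$ also vanishes at these $t$. If $p$ is rich, with more than $d$ lines, both $Q$ and $Q_x+tQ_y$ vanish identically on $V_p$.

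\textbf{Step 3: Bezout and the main obstacle.} The hypothesis $d<\fp$ enters here. It guarantees that $\partial_z Q\not\equiv 0$ and that a degree-$\le d$ polynomial in $t$ with more than $d$ roots must vanish, so no $t^{\fp}$ pathology occurs. Eliminating $z$ (for instance via the resultant of $Q$ and $Q_x+zQ_y$ in $z$, or by restricting to the curve of rich points) yields two plane curves of degrees about $d$ and $d-1$ with no common component. These curves contain all rich points, so Bezout gives at most $d(d-1)$ rich points, or $d(d-1)$ incidences after suitable weighting. Adding the contributions of poor points, at most $d$ lines each, which is absorbed into $2\abs{\cP}\abs{\cL}/d$ through the reduction, gives the stated bound.

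I expect the main obstacle to be the no-common-component step: ruling out that $Q$ is ``ruled'' by the Legendrian lines, i.e.\ that $Q$ and $Q_x+zQ_y$ share a factor. My first attempt would be to replace $Q$ by a minimal-degree factor, and then argue that a common factor would force a contact-invariant surface. Such a surface, in characteristic $>d$, would have to be the lift of a single point or line, which is excluded since such lines contribute at most the trivial $\abs{\cP}+\abs{\cL}$ term.
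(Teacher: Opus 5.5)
\textbf{There is a genuine gap in Step 3.} From ``$p$ lies on more than $\deg_z Q$ lines'' you only extract that $Q$ vanishes on the fibre $V_p$. That is the weakest case of what is true. You then at best bound the \emph{number} of rich points, which does not bound their \emph{incidences}. A rich point can lie on up to $\abs{\cL}$ lines. Even granting $N\leq d(d-1)$ rich points, the trivial estimate $I\leq\abs{\cL}+\binom{N}{2}$ only gives order $d^4$.

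The phrase ``$d(d-1)$ incidences after suitable weighting'' is exactly where the key idea is needed, and the proposal does not supply it. In the paper this is \cref{lemma:incidence-to-mult}:
\begin{itemize}
\item Write $f(p_x+t,p_y+st,s)=\sum_i c_i(s)t^i$. Then $\deg c_i\leq e+i$, while every $c_i$ vanishes at the $r(p)$ slopes of lines through $p$. Hence $c_i\equiv 0$ for $i<r(p)-e$.
\item So all incidences at $p$ beyond the first $e$ become multiplicity of $f$ along $\ell(p,s)$, simultaneously in \emph{every} slice.
\item One then fixes a single generic slice in which $f_s$ is square-free and contains no line $\ell(p,s)$ for non-exceptional $p$. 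B\'ezout for $f_s$ against its derivative in the direction $(1,s)$ gives $\sum_p(\mu_f(p,s)-1)_+\leq d(d-1)$ (\cref{lemma:bound-sum-mult}).
\end{itemize}
This step, together with square-freeness of generic slices, is where $d<\fp$ is used. It is \emph{not} needed to say a degree-$\leq d$ polynomial with more than $d$ roots vanishes, which holds in every characteristic. The outcome is $I\leq (e+1)\abs{\cP}+\abs{\cL}+d(d-1)$ directly. No dyadic reduction is needed, and one would in any case destroy the exact constants in the statement.

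\textbf{Your degree allocation is also reversed.} The $d(d-1)$ term must come from the \emph{spatial} degree $\deg_{x,y}f\leq d$. The $2\abs{\cP}\abs{\cL}/d$ term comes from the \emph{slope} degree $e=\lfloor 2\abs{\cL}/d\rfloor$. \cref{lemma:param-counting} permits this because $\binom{d+2}{2}(e+1)>(d+1)\abs{\cL}$ is equivalent to $(d+2)(e+1)>2\abs{\cL}$; that, not a pigeonhole step, is the source of the factor $2$. With $\deg_zQ\leq d$ as you propose, poor points contribute up to $d\abs{\cP}$ incidences rather than $2\abs{\cP}\abs{\cL}/d$. Also, a resultant in $z$ of $Q$ and $Q_x+zQ_y$ has degree of order $\deg_{x,y}Q\cdot\deg_zQ$, not $d$.

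\textbf{Your treatment of the common-factor obstacle is incorrect.} Every $h=\underline{h}(z,y-zx)$ satisfies $(\partial_x+z\partial_y)h\equiv 0$. Its zero set is the union of lifts of an arbitrary algebraic one-parameter family of lines, not only the lift of a point or a line. For example, $\underline{h}(a,b)=b-a^2$ gives the tangent lines of a parabola, and such lines can carry many incidences. This is why Miao--Xie need duality and \cref{lemma:mx-exceptional-lines}. In the paper's proof of this theorem, the only degenerate points are those $p$ for which $f$ vanishes on the whole regulus $R_p$. Each contributes a distinct factor $(y-p_y)-s(x-p_x)$ of $f$, so there are at most $\deg_sf\leq e$ of them (\cref{lemma:small-exceptional}). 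Their incidences are bounded by $\abs{\cL}+\binom{\abs{\cP_{\textup{ex}}}}{2}$.
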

We will show that by choosing $d$ appropriately, the right hand side can be upper bounded by $\abs{\cP}+\abs{\cL}+3\abs{\cP}^{2/3}\abs{\cL}^{2/3}+\frac{2\abs{\cP}\abs{\cL}}{\fp}$.
Again, Lewko did not make the constants explicit in the proof, and we make the constants explicit for comparison.

As can be seen from the constants in the theorem statements above, the computation in Lewko's proof is simpler than the computation in Miao and Xie's proof.
In addition, the version of B\'ezout's theorem Lewko uses is more common than the version (\cref{lemma:bihom-bezout}) of B\'ezout's theorem Miao and Xie use.
However, Lewko's proof contains a step (\cref{lemma:incidence-to-mult}) that is hard to motivate before seeing Miao and Xie's proof.
We will thus present Miao and Xie's proof first, although some readers might find Lewko's proof easier to understand.

\subsection{Tight point-line incidence}
Characteristics show up in both proofs for the same reason---when $\fp<\infty$, it is possible for a nonzero polynomial to have formal derivatives all equal to the zero polynomial.
For example, in $\FF[x]$, the formal derivative of the polynomial $x^{\fp}$ is $\fp x^{\fp-1}=0$.
This kind of obstruction is necessary because of the existence of the finite field construction mentioned earlier that breaks the usual Szemer\'edi--Trotter bound.

In applications of polynomial methods, this issue is usually dealt with using the Hasse derivative (see \cite{KLSS11,DKSS13,CI20,BC21,DDL21,TYZ22,CY26-2} for examples).
The same approach can be applied to both Lewko's proof and Miao and Xie's proof.
By carefully controlling the degrees of the polynomials and the orders of the Hasse derivatives used, the $\frac{\abs{\cP}\abs{\cL}}{\fp}$ term can be improved at the cost of a larger coefficient for $\abs{\cP}$.
Unfortunately, when going from the bound on rich lines to the bound on the number of incidences, Miao and Xie's proof produces an extra $\log \fp$ factor for the coefficient of $\abs{\cP}$.

We will also see that assuming $\FF$ is algebraically closed (and thus contains $\FF_{\fp^m}$ for all $m\in\NN$ if $\fp<\infty$), \cref{thm:main} is tight up to a constant for all regimes.
This resolves the point-line incidence problem over algebraically closed fields, leaving some gaps open for finite fields.
We will discuss this in the concluding remarks.

The bound in \cref{thm:main} can be compared with the K\H{o}v\'ari--S\'os--Tur\'an bound 
\[I(\cP,\cL)\lesssim \abs{\cL}+\abs{\cP}\abs{\cL}^{1/2}.\]
Suppose that $\fp<\infty$ and $\abs{\cL} = \fp^{2(k-1+\theta)}$, where $\theta\in [0,1)$.
Then the bound in \cref{thm:main} can be rephrased as
\[I(\cP,\cL)\lesssim \abs{\cL}+\fp^{-\min(\theta,1-\theta)}\abs{\cP}\abs{\cL}^{1/2}+\abs{\cP}^{2/3}\abs{\cL}^{2/3},\]
showing that this is an improvement by a factor of at most $\fp^{1/2}$.
In particular, the improvement is strongest when $\abs{\cL}$ is ``stuck in the middle'' between consecutive even powers of $\fp$.

It is likely that \cref{thm:main} may lead to improvements in many applications of Szemer\'edi--Trotter's theorem over finite fields, many of which are recorded in Lewko's and Miao and Xie's work \cite{Lew26,MX26}.
However, as the improvement is at most a factor of $\fp^{1/2}$ compared to the K\H{o}v\'ari--S\'os--Tur\'an bound, we do not pursue this direction and leave this for interested readers.
\subsection{Structure of the paper}
In \cref{sec:lift}, we first lay out the lifting framework that both proofs work in.
In \cref{sec:mx}, we present Miao and Xie's proof, which is followed by an independent presentation of Lewko's proof in \cref{sec:lewko}.
We then show how both proofs can be modified using Hasse derivatives and derive \cref{thm:main} using Lewko's proof in \cref{sec:main},  where we also prove the tightness of the bound and compare it with some other known bounds.
\cref{sec:conclude} ends the paper with several concluding remarks.

\paragraph{AI Statement} ChatGPT 6 Pro was used to summarize Lewko's paper \cite{Lew26} for the author and produce all the figures in this note. 
It was also used to help with literature search and check for grammar mistakes and typos.
During this process, ChatGPT 6 Pro correctly pointed out that \cref{thm:modified-mx-bound-rich-line} does not directly imply \cref{thm:main} because of a logarithmic loss the author previously overlooked.
ChatGPT-6 Astra was used to produce a construction in \cref{subsec:tightness}, although the construction was independently communicated by Dmitrii Zakharov to the author.
All text in this note was written by the author, and the author takes full responsibility for any mistakes.

\section{Lifting to three-dimensional space}\label{sec:lift}
In this section, we explain how the two-dimensional configuration is lifted to $\FF^3$ and outline the proof strategy.
This lifting is the heart of the argument.

In the remainder of the paper, unless specified otherwise, we will assume $\FF$ is algebraically closed by taking field extensions.
Then by an appropriate change of coordinates, we will assume that all lines in $\cL$ are not vertical.

Any line $\ell$ in $\cL$ can be parametrized as $\{(t,at+b):t\in \FF\}$ for some $(a,b)\in \FF^2$.
For any such line, we will lift it to $\Tilde{\ell}\subseteq \FF^3$ by $\Tilde{\ell}=\{(t,at+b,a):t\in \FF\}$.
Let $\Tilde{\cL}$ be the set of the lifted lines in $\FF^3$.
One should think of $\FF^3$ as $\FF^2\times \FF$, where the first component is the \emph{spatial component} and the second component is the \emph{slope component}.
A \emph{slice} is the plane $\FF^2\times \{s\}\subseteq \FF^3$ for some fixed slope $s$.
What we are doing is simply lifting the lines to the correct slice according to their slope, and so the lines within a particular slice are all parallel to one another.
In both proofs, we will treat the two components really differently, and this will be emphasized again when it is used.

By doing so, we have ``separated'' the incidences occurring at a particular point $p\in \cP$.
Indeed, if $p$ is incident to $r$ lines in $\cL$, then the line $\ell_p\coloneqq \{(p,t):t\in \FF\}$ intersects with $r$ lines from $\Tilde{\cL}$ at $r$ distinct points.
The separation of the incidences is going to be useful for Lewko's proof.

\begin{figure}[h]
    \centering
    \begin{subfigure}[b]{0.4\textwidth}
        \begin{tikzpicture}[scale=1.2]
    \draw[->, gray] (-2.5,0) -- (2.5,0) node[right] {$x$};
    \draw[->, gray] (0,-2.5) -- (0,2.5) node[above] {$y$};

    \draw[thick, blue] (-2,2) -- (2,-2) node[right] {$y=-x$};
    \draw[thick, red] (-2.2,0) -- (2.2,0) node[above] {$y=0$};
    \draw[thick, green!80!black] (-2,-2) -- (2,2) node[right] {$y=x$};

    \fill (0,0) circle (2pt) node[below left] {$p$};
\end{tikzpicture}
    \end{subfigure}
    \quad\quad\quad
    \begin{subfigure}[b]{0.4\textwidth}
    \begin{tikzpicture}[
    x={(1cm,0cm)},
    y={(0.35cm,0.55cm)},
    z={(0cm,1.9cm)},
    line cap=round,
    line join=round,
    lifted/.style={thick},
    level/.style={draw=gray!40, fill=gray!5, thin}
]
    \foreach \a in {-1,0,1} {
        \path[level]
            (-1.5,-1.5,\a) -- (1.5,-1.5,\a) --
            (1.5,1.5,\a) -- (-1.5,1.5,\a) -- cycle;

        \node[left, text=gray!75!black]
            at (-1.65,0,\a) {$s=\a$};
    }

    \draw[thick, densely dashed]
        (0,0,-1.65) -- (0,0,1.65)
        node[above] {$\ell_p$};

    \draw[lifted, blue]
        (-1.5,1.5,-1) -- (1.5,-1.5,-1)
        node[right] {$(t,-t,-1)$};

    \draw[lifted, red]
        (-1.5,0,0) -- (1.5,0,0)
        node[right] {$(t,0,0)$};

    \draw[lifted, green!80!black]
        (-1.5,-1.5,1) -- (1.5,1.5,1)
        node[right] {$(t,t,1)$};

    \foreach \a in {-1,0,1}
        \fill (0,0,\a) circle[radius=2pt];

\end{tikzpicture}

    \end{subfigure}
    \caption{Lifting three lines through the origin.}
\end{figure}

A more important advantage of this lifting is that we can measure degrees of a polynomial in two different ways.
For any $f\in \FF[x,y,s]$, we will measure its \emph{spatial degree} $\deg_{x,y}f$ separately from its \emph{slope degree} $\deg_s f$.
This extra degree of freedom will be crucial in both proofs.
We briefly remark that the idea of measuring degrees asymmetrically has already shown up in other arguments using polynomial methods such as \cite{BC21}.

For both proofs, it will be useful to think about the family of lines through a fixed point and what they lift up to in $\FF^3$.
Suppose that $p=(p_x,p_y)$, and think of $s$ as a moving element in $\FF$.
As $(p,s)\in \FF^3$ runs through the line $\ell_p$, think of the line $\ell(p,s)\coloneqq\{(p_x+t,p_y+st,s):t\in \FF\}$ as a moving line in the $s$-th slice through the point $(p,s)$.
The family of the lines $\{\ell(p,s):s\in \FF\}$ sweeps out a degree-$2$ surface that is usually called a \emph{regulus}, which we will denote as $R_p$.

\begin{figure}[h]
    \centering
    \begin{tikzpicture}[
    x={(1cm,0cm)},
    y={(0.35cm,0.55cm)},
    z={(0cm,1.9cm)},
    line cap=round,
    line join=round,
    mesh/.style={cyan!40!gray, thin},
    boundary/.style={cyan!45!gray, semithick},
    lifted/.style={thick}
]
    %
    \shade[top color=cyan!7, bottom color=cyan!20]
        (-1.5, 1.875,-1.25) -- ( 1.5,-1.875,-1.25) --
        ( 1.5, 1.875, 1.25) -- (-1.5,-1.875, 1.25) -- cycle;

    \foreach \s in {-1.25,-1,...,1.25} {
        \draw[mesh]
            (-1.5,{-1.5*\s},\s) -- (1.5,{1.5*\s},\s);
    }

    \foreach \t in {-1.5,-1.25,...,1.5} {
        \draw[mesh]
            (\t,{-1.25*\t},-1.25) -- (\t,{1.25*\t},1.25);
    }

    \draw[boundary]
        (-1.5, 1.875,-1.25) -- ( 1.5,-1.875,-1.25) --
        ( 1.5, 1.875, 1.25) -- (-1.5,-1.875, 1.25) -- cycle;

    \draw[thick, densely dashed]
        (0,0,-1.9) -- (0,0,1.9)
        node[above] {$\ell_p$};

    \draw[lifted, blue]
        (-1.7,1.7,-1) -- (1.7,-1.7,-1)
        node[right] {$(t,-t,-1)$};

    \draw[lifted, orange]
        (-1.7,-0.85,0.5) -- (1.7, 0.85, 0.5)
        node[right] {$\ell(p,s)$};

    \draw[lifted, red]
        (-1.7,0,0) -- (1.7,0,0)
        node[right] {$(t,0,0)$};

    \draw[lifted, green!80!black]
        (-1.7,-1.7,1) -- (1.7,1.7,1)
        node[right] {$(t,t,1)$};

    \foreach \s in {-1,0,1}
        \fill (0,0,\s) circle[radius=2pt];
    \fill (0,0,0.5) circle[radius=2pt] node[right]{$(p,s)$};

    \node[above left, text=cyan!35!black]
        at (-1.5,-1.875,1.25) {$R_p:y=sx$};

\end{tikzpicture}
    \caption{The moving line $\ell(p,s)$ and the surface $R_p$ it sweeps out for $p=(0,0)$. The regulus $R_p$ contains two one-dimensional families of lines as shown in the diagram, although only the family parametrized by $\ell(p,s)$ will be relevant.}
\end{figure}

In both proofs, we will be measuring the multiplicity of some polynomial $f\in \FF[x,y,s]$ along the lines $\ell(p,s)$.
For our convenience, define $\mu_f(p,s)$ to be the multiplicity of $f$ at $(p,s)$ along the line $\ell(p,s)$.
Geometrically, this is the order that the line $\ell(p,s)$ is tangent to the surface defined by $f$ at $(p,s)$.
Algebraically, it is the largest non-negative integer $m$ so that $t^m\mid f(p_x+t,p_y+st,s)$ as polynomials in $\FF[t]$.
Note that if the line $\ell(p,s)$ is contained in the zero locus of $f$, then we set $\mu_f(p,s) = \infty$.

\section{Miao and Xie's proof}\label{sec:mx}
In this section, we present Miao and Xie's proof of \cref{thm:mx-bound-rich-line}.
We will also show that it implies \cref{thm:st}.
\subsection{Vanishing lemma}
Let $\cP\subseteq \FF^2$ be a set of points, and let $r$ be any positive integer.
Let $\cL_r$ be the set of $r$-rich lines for $\cP$, and let $\Tilde{\cL_r}$ be the set of lifted lines.
The proof begins by finding a polynomial $f$ of controlled degrees that vanishes identically on all lines in $\Tilde{\cL_r}$.
This will be done by requiring $f$ to vanish ``around $\ell_p$ to high order'' for each $p\in \cP$, which we make precise below.

Suppose that $\ell\in \cL_r$ passes through the points $p_1,\ldots,p_r\in \cP$.
Let $s_0$ be the slope of $\ell$.
Then $(p_1,s_0),\ldots, (p_r,s_0)$ are points on the lifted line $\Tilde{\ell}$.
If $r>\deg_{x,y}f$, it thus suffices to require $f$ to vanish at each $(p_i,s_0)$.
For larger spatial degrees, we can require $f$ to vanish to higher multiplicities at each $(p_i,s_0)$ along the line $\Tilde{\ell}$.
To be more specific, if $f|_{\Tilde{\ell}}$ vanishes at each $(p_i,s_0)$ to multiplicity $m$ and $rm>\deg_{x,y}f$, then $f$ has to be identically zero on $\Tilde{\ell}$.
Since $\Tilde{\ell} = \ell(p_i,s_0)$ for each $i\in[r]$, the requirement is the same as requiring $\mu_f(p_i,s_0)\geq m$.

Thus, to make $f$ vanish identically on all lines in $\Tilde{\cL_r}$, it suffices to ask $\mu_f(p,s)\geq m$ for every $p\in \cP$ when there is a line in $\cL_r$ through $p$ with slope $s$.
It turns out that we can simply ask $\mu_f(p,s)\geq m$ to hold for all $s\in \FF$ for every $p\in\cP$.
We record the observations as the lemma below.

\begin{lemma}[Vanishing lemma]\label{lemma:mx-vanishing-lemma}
    Let $\cP\subseteq \FF^2$ be a set of points, and let $r$ be any positive integer.
    Let $\cL_r$ be the set of $r$-rich lines for $\cP$, and let $\Tilde{\cL}_r$ be the set of lifted lines.
    
    Suppose $f\in\FF[x,y,s]$ and $m\in \NN$ are such that $rm>\deg_{x,y}f$ and $\mu_f(p,s)\geq m$ for every $s\in \FF$ and $p\in \cP$. Then $f$ is identically zero on each line in $\Tilde{\cL_r}$.
\end{lemma}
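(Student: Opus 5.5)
Fix a line $\ell\in\cL_r$ with slope $s_0$ and intercept $b$, so that its lift is $\Tilde{\ell}=\{(t,s_0t+b,s_0):t\in\FF\}$. The plan is to restrict $f$ to $\Tilde{\ell}$ and count zeros of a one-variable polynomial. Define
\[g(t)\coloneqq f(t,s_0t+b,s_0)\in\FF[t].\]
Substituting a fixed value $s_0$ for $s$ and linear forms in $t$ for $x,y$ cannot raise the degree in $t$ above the spatial degree. Hence either $g=0$ or $\deg g\leq \deg_{x,y}f$. The claim is exactly that $g=0$.

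Next I will relate the zeros of $g$ to the multiplicities $\mu_f$. By definition of $\cL_r$, the line $\ell$ contains at least $r$ distinct points $p_1,\ldots,p_r\in\cP$, and write $p_i=(a_i,s_0a_i+b)$. The key observation is that $\ell(p_i,s_0)=\Tilde{\ell}$ for each $i$. Indeed,
\[\ell(p_i,s_0)=\{(a_i+t,\,s_0a_i+b+s_0t,\,s_0):t\in\FF\},\]
which is just $\Tilde{\ell}$ reparametrized by $t\mapsto a_i+t$. Therefore
\[f(a_i+t,\,s_0a_i+b+s_0t,\,s_0)=g(a_i+t).\]
The hypothesis $\mu_f(p_i,s_0)\geq m$, applied with $s=s_0$, says $t^m\mid g(a_i+t)$. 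Equivalently, $(t-a_i)^m\mid g(t)$.

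Since the $p_i$ are distinct points on a non-vertical line, the $a_i$ are distinct. The factors $(t-a_i)^m$ are therefore pairwise coprime, so their product divides $g$:
\[\prod_{i=1}^r (t-a_i)^m \mid g(t).\]
This product has degree $rm>\deg_{x,y}f$. If $g\neq 0$ we would have $\deg g\geq rm>\deg_{x,y}f\geq\deg g$, a contradiction. Hence $g=0$, so $f$ vanishes identically on $\Tilde{\ell}$. As $\ell\in\cL_r$ was arbitrary, the lemma follows.

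There is no real obstacle here. The only points needing care are the degree bound $\deg g\leq\deg_{x,y}f$, which holds because $s$ is specialized to a constant, and the reparametrization identity $\ell(p_i,s_0)=\Tilde{\ell}$. The second point also covers the case $\mu_f=\infty$, which only strengthens the divisibility.
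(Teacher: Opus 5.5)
Your proof is correct and follows essentially the same route as the paper. Both restrict $f$ to the lifted line, translate $\mu_f(p_i,s_0)\geq m$ into $(t-a_i)^m \mid f(t,s_0t+b,s_0)$, and compare the resulting $rm$ roots counted with multiplicity against $\deg_{x,y}f<rm$; you also make explicit the distinctness of the $a_i$ and the $\mu_f=\infty$ case, which the paper leaves implicit.
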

\begin{proof}
    Let $\ell\in \cL_r$ be a line passing through distinct points $p_1,\ldots, p_r\in \cP$.
    Suppose that the line $\ell$ can be parametrized as $\{(t,at+b):t\in \FF\}$ and $p_i = (t_i,at_i+b)$ for every $i\in[r]$.
    By definition, we have $\Tilde{\ell} = \{(t,at+b,a):t\in \FF\}$.

    By the assumption, we have $\mu_f(p_i,a)\geq m$ for each $i\in [r]$.
    By definition, this means \[t^m\mid f(t_i+t,a(t_i+t)+b,a),\]
    which is the same as 
    \[(t-t_i)^m\mid f(t,at+b,a).\]
    In other words, the univariate polynomial $f(t,at+b,a)$ in $t$ has a root at $t_i$ of multiplicity at least $m$ for each $i\in [r]$.
    Therefore it has at least $rm$ roots counted with multiplicities.
    On the other hand, $\deg_t f(t,at+b,a)\leq \deg_{x,y}f<rm$, which shows that $f(t,at+b,a)$ is the zero polynomial in $t$.
    Therefore $f$ is identically zero on $\Tilde{\ell}$.
\end{proof}

\subsection{Parameter counting}
Our goal now is to find a polynomial $f\in \FF[x,y,s]$ with small degrees that satisfies the condition in \cref{lemma:mx-vanishing-lemma}.
This is done by a usual parameter counting argument, although we have to count carefully.
For example, for a fixed $p\in \cP$, suppose we would like to have $\mu_f(p,s)\geq 1$ for all $s\in \FF$.
This is the same as requiring $f(p,s)=0$ for all $s\in \FF$, although this is too wasteful as $\FF$ is infinite.
Instead, we can expand $f(p,s)$ as a polynomial in $s$ of degree at most $\deg_sf$, and we just need to require that the $(\deg_s f+1)$ coefficients of the polynomial are zero.
This allows us to put finitely many linear conditions on $f$ to ensure that $f(p,s)=0$ for all $s\in \FF$.
The full proof uses the same idea to deal with higher multiplicities.

\begin{lemma}[Parameter counting for Miao--Xie]\label{lemma:mx-parameter-counting}
    Let $\cP$ be a set of points in $\FF^2$.
    Let $m,d,e$ be any non-negative integers such that 
    \[\binom{d+2}{2}(e+1)>\left(e+\frac{m+1}{2}\right)m\abs{\cP}.\]
    Then there exists a nonzero polynomial $f\in \FF[x,y,s]$ with $\deg_{x,y}f\leq d$, $\deg_s f\leq e$ and $\mu_f(p,s)\geq m$ for any $p\in \cP$ and $s\in \FF$.
\end{lemma}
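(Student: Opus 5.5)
The plan is the standard linear-algebra count: the space of polynomials with the given degree bounds has dimension exceeding the number of linear conditions needed, so a nonzero solution exists.

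The space $V$ of $f\in\FF[x,y,s]$ with $\deg_{x,y}f\le d$ and $\deg_s f\le e$ has dimension $\binom{d+2}{2}(e+1)$. It is spanned by the monomials $x^iy^js^k$ with $i+j\le d$ and $k\le e$.

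Fix $p=(p_x,p_y)\in\cP$ and expand
\[f(p_x+t,p_y+st,s)=\sum_{j\ge 0} c_j(s)\,t^j .\]
Each coefficient $c_j(s)\in\FF[s]$ depends linearly on $f$. The condition that $\mu_f(p,s)\ge m$ for every $s\in\FF$ says that $c_0,\ldots,c_{m-1}$ vanish on all of $\FF$. Since $\FF$ is infinite (it is algebraically closed), this is equivalent to $c_0,\ldots,c_{m-1}$ being the zero polynomial in $s$.

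The key point is to bound $\deg_s c_j$ by $e+j$, rather than by the cruder bound $e+d$. Write $f=\sum_{a+b\le d}f_{a,b}(s)x^ay^b$ with $\deg f_{a,b}\le e$. Then
\[(p_x+t)^a(p_y+st)^b\]
is a polynomial in $t$ whose $t^j$-coefficient has $s$-degree at most $j$: each factor of $t$ comes with at most one factor of $s$. Hence $\deg_s c_j\le e+j$.

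Requiring $c_j\equiv 0$ therefore imposes at most $e+j+1$ linear conditions on $f$. Summing over $j=0,\ldots,m-1$ gives, for each point $p$,
\[\sum_{j=0}^{m-1}(e+j+1)=me+\frac{m(m+1)}{2}=\Bigl(e+\frac{m+1}{2}\Bigr)m\]
conditions. Over all of $\cP$ there are at most $\bigl(e+\frac{m+1}{2}\bigr)m\abs{\cP}$ homogeneous linear conditions.

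By hypothesis this number is strictly less than $\dim V$. So the common kernel of these conditions in $V$ is nonzero, and any nonzero $f$ in it has the required properties.

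The only step needing care is the degree bound $\deg_s c_j\le e+j$. This bound is what produces the exact count $\bigl(e+\frac{m+1}{2}\bigr)m$ appearing in the statement; everything else is routine linear algebra.
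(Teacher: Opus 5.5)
Your proposal is correct and follows the paper's proof essentially step for step: the same dimension count $\binom{d+2}{2}(e+1)$, the same expansion of $f(p_x+t,p_y+st,s)$ with the key bound $\deg c_j\le e+j$, and the same total of $\bigl(e+\frac{m+1}{2}\bigr)m\abs{\cP}$ linear conditions. Only the direction ``$c_0,\ldots,c_{m-1}\equiv 0$ implies $\mu_f(p,s)\ge m$ for all $s$'' is needed, so your appeal to $\FF$ being infinite is harmless but unnecessary.
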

\begin{proof}
    The space of polynomials with spatial degree at most $d$ and slope degree at most $e$ has dimension exactly $\binom{d+2}{2}(e+1)$.
    On the other hand, to ensure $\mu_f(p,s)\geq m$ for a fixed $p\in \cP$ and for all $s\in \FF$, we expand 
    \[f(p_x+t,p_y+st,s) = \sum_{i=0}^{d}c_i(s)t^i\]
    where each $c_i\in \FF[s]$.
    Then we just need to make sure that $c_i(s)\equiv 0$ for every $0\leq i\leq m-1$.
    To count the number of linear constraints needed to ensure $c_i(s)\equiv 0$, we need to understand the degree of $c_i$.

    Note that each term of $f(p_x+t,p_y+st,s)$ is of the form $(p_x+t)^{\alpha}(p_y+st)^{\beta}s^{\gamma}$ with $\alpha+\beta\leq d$ and $\gamma\leq e$.
    When we expand $(p_x+t)^{\alpha}(p_y+st)^{\beta}$, the degree of $s$ is always at most the degree of $t$.
    Multiplying by $s^{\gamma}$ raises the degree of $s$ by at most $e$, showing that $\deg c_i\leq e+i$.

    Therefore to require $\mu_f(p,s)\geq m$ for all $s\in \FF$ for a single $p$, we just need to impose
    \[\sum_{i=0}^{m-1}(e+i+1) = \frac{(2e+m+1)m}{2} = \left(e+\frac{m+1}{2}\right)m\]
    linear conditions on $f$.
    Multiplying by $\abs{\cP}$, we see that the number of linear conditions we put on $f$ is smaller than the dimension of the space of polynomials.
    Therefore there must be a nonzero polynomial $f$ in the space satisfying all the linear conditions, which in turn guarantees that $\mu_f(p,s)\geq m$ for any $p\in\cP$ and $s\in \FF$.
\end{proof}

\subsection{B\'ezout's theorem for bihomogeneous polynomials}
By combining \cref{lemma:mx-vanishing-lemma} with \cref{lemma:mx-parameter-counting}, we have obtained a polynomial $f$ that vanishes on the set of lifted rich lines $\Tilde{\cL_r}$.
We will use this to bound the size of $\Tilde{\cL_r}$ and thus the size of $\cL_r$.

The first observation is that if $f$ is identically zero on a lifted line $\Tilde{\ell}$, then the derivative of $f$ along $\Tilde{\ell}$ must also be zero.
The directional derivatives for all lifted lines can be written as $(\partial_x+s\partial_y)f$, which can be seen by a direct computation.
The geometric intuition is that if we work over $\RR$, the vector field $(1,s,0)$ in $\RR^3$ integrates to exactly the family of lifted lines from $\RR^2$, and so differentiating along the lifted lines is the same as differentiating along the vector field.
With this observation, we denote by $D$ the derivative operator $\partial_x+s\partial_y$.

Since both $f$ and $Df$ vanish on $\Tilde{\cL_r}$, a natural attempt to bound $\abs{\Tilde{\cL_r}}$ is to apply B\'ezout's theorem to the polynomials $f$ and $Df$.
However, the common factors of $f$ and $Df$ need to be dealt with separately.
Those will be dealt with in the next subsection.
For now, assume that $f$ and $Df$ are coprime, which shows that their common zero locus is at most $1$-dimensional.

Now a na\"ive application of B\'ezout's theorem shows that the intersection of the zero loci of $f$ and $Df$ has degree at most $\deg_{x,y,s}f\deg_{x,y,s}Df\leq (\deg_{x,y,s}f)^2$.
Therefore $\abs{\Tilde{\cL_r}}$ is bounded by the same number as well.
However, this bound turns out to be lossy when the spatial degree of $f$ is much larger or much smaller than the slope degree of $f$.
We will instead prove the following bound.

\begin{lemma}\label{lemma:mx-improved-bezout}
    Let $f,g$ be coprime polynomials in $\FF[x,y,s]$.
    Then there are at most $\deg_sf\deg_{x,y}g+\deg_{x,y}f\deg_sg$ lifted lines in the common zero locus of $f$ and $g$.
\end{lemma}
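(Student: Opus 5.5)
We will compactify $\FF^3$ to $\PP^2\times\PP^1$ and run a bihomogeneous intersection count there. This is the version of B\'ezout's theorem that the introduction attributes to Miao and Xie.

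\textbf{Step 1: Homogenize.} Write $d_1=\deg_{x,y}f$, $e_1=\deg_sf$, $d_2=\deg_{x,y}g$ and $e_2=\deg_sg$. Homogenizing separately in $(x,y,z)$ and $(s,w)$ turns $f$ and $g$ into bihomogeneous forms $F,G$ of bidegrees $(d_1,e_1)$ and $(d_2,e_2)$ on $X=\PP^2\times\PP^1$. Coprimality survives this, since homogenization creates no new common factors other than powers of $z$ or $w$, and those do not divide a homogenized polynomial. Hence $V(F)\cap V(G)$ is a curve $C$ in the threefold $X$.

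\textbf{Step 2: Intersect with a test divisor.} Let $H_1$ be the pullback of a line class from $\PP^2$ and $H_2$ the pullback of a point class from $\PP^1$. In the Chow ring $H_1^3=0$, $H_2^2=0$ and $H_1^2H_2=1$. The cycle $[F]\cdot[G]=(d_1H_1+e_1H_2)(d_2H_1+e_2H_2)$ then has degree against $H_1$ equal to
\[(d_1H_1+e_1H_2)(d_2H_1+e_2H_2)H_1 = d_1e_2+e_1d_2.\]

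\textbf{Step 3: Each lifted line contributes at least $1$.} The closure of a lifted line $\tilde\ell=\{(t,at+b,a)\}$ lies in a single slice $\{s=a\}$ and projects isomorphically onto a line in $\PP^2$. Its class is therefore $H_1H_2$, and $H_1H_2\cdot H_1=1$. Every lifted line in the common zero locus is an irreducible component of $C$, and it appears in $[F]\cdot[G]$ with multiplicity at least $1$. The other components of $C$ pair nonnegatively with the nef class $H_1$. Summing over components, the number of lifted lines is at most $d_1e_2+e_1d_2$, which is the claimed bound.

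\textbf{Main obstacle.} The delicate point is justifying the bihomogeneous B\'ezout statement. We need that the intersection product of two effective divisors without common components is an effective $1$-cycle supported on $V(F)\cap V(G)$, and that it has multiplicity at least $1$ along each component. This is standard intersection theory (Fulton's refined intersection), but stated this way it is a black box.

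For an elementary alternative, we would instead choose a generic line $L\subset\PP^2$ and restrict $F,G$ to the surface $L\times\PP^1\cong\PP^1\times\PP^1$. Genericity ensures that no common factor appears after restriction. Each lifted line meets $L\times\PP^1$ in exactly one point, and distinct lifted lines meet it in distinct points for generic $L$. The bound then reduces to the classical count: two coprime forms of bidegrees $(d_1,e_1)$ and $(d_2,e_2)$ on $\PP^1\times\PP^1$ have at most $d_1e_2+e_1d_2$ common zeros. That count can be proved by a resultant computation in one of the variables.

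The step to check carefully is genericity: we must confirm that coprimality is preserved on a generic such surface. This is a Bertini-type argument: a common factor of the restrictions for all $L$ would force a common component of $V(F)\cap V(G)$ of dimension $2$, which is impossible.
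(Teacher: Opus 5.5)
Your proposal is correct and is essentially the paper's argument. The paper also bihomogenizes to $\PP^2\times\PP^1$ and intersects with a generic form $h$ of bidegree $(1,0)$; this is your class $H_1$, or equivalently your surface $L\times\PP^1$. It then uses that each lifted line meets $V(h)$ in exactly one point, with distinct lines giving distinct points, and reads off $\deg_sf\deg_{x,y}g+\deg_{x,y}f\deg_sg$ from the bihomogeneous B\'ezout theorem. Your Chow-ring version, pairing $[F]\cdot[G]$ with the nef class $H_1$, is the same computation, and it sidesteps the paper's implicit genericity check that the triple intersection is zero-dimensional.
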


Note that when applied to $f$ and $Df$, this gives an upper bound of  
\[\deg_sf(\deg_{x,y}f-1)+\deg_{x,y}f(\deg_sf+1)=2\deg_{x,y}f\deg_sf+\deg_{x,y}f-\deg_sf\]
as $D$ decreases the spatial degree by at least $1$ and increases the slope degree by at most $1$.
If $\deg_{x,y,s}f = \deg_{x,y}f+\deg_sf$, then this bound is better than the previous bound when applied to $f$ and $Df$.

\cref{lemma:mx-improved-bezout} is a direct corollary of a variant of B\'ezout's theorem.
To set up, we will think of $\FF^3$ as $\FF^2\times \FF$ and embed it into $\PP_{\FF}^2\times \PP_{\FF}^1$.
This embedding is defined by $(x,y,s)\mapsto ([x:y:1], [s:1])$.
This allows us to apply tools from intersection theory on $\PP_{\FF}^2\times \PP_{\FF}^1$.
We only state the special case that is relevant to us.

\begin{lemma}[Bihomogeneous B\'ezout's theorem]\label{lemma:bihom-bezout}
    Let $f_1,f_2,f_3\in \FF[x,y,z,s,u]$ be polynomials so that $f_i$ is homogeneous in $x,y,z$ of degree $d_i$ and homogeneous in $s,u$ of degree $e_i$ for each $i\in[3]$.
    Assume that the intersection of the zero loci of $f_1$, $f_2$ and $f_3$ is zero-dimensional.
    Then the number of points, counted with multiplicities, in the intersection is $e_1d_2d_3+d_1e_2d_3+d_1d_2e_3$.
\end{lemma}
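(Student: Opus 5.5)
The plan is to reduce the statement to intersection theory on $X=\PP^2\times\PP^1$. We use the Chow ring $A^*(X)=\ZZ[H,K]/(H^3,K^2)$. Here $H$ is the pullback of the hyperplane class from $\PP^2$ and $K$ is the pullback of the point class from $\PP^1$. A polynomial $f_i$ that is bihomogeneous of bidegree $(d_i,e_i)$ is a section of the line bundle $\cO(d_i,e_i)$, and its zero divisor has class $d_iH+e_iK$. Since $X$ is smooth and projective of dimension $3$, and the three divisors meet in a zero-dimensional scheme, the refined intersection product (Fulton, Proposition 7.1 / Example 8.2.1) says the number of intersection points counted with multiplicity equals the degree of the product of the three classes.

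The key steps are as follows.

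\textbf{Step 1.} Check that $X$ is smooth and that its Chow ring is as stated. This follows from the projective bundle formula, or from the Künneth formula for products of projective spaces. The degree map sends $H^2K$ to $1$. This is because a general line in $\PP^2$ meets a second general line in one point, and a point of $\PP^1$ is one point, so the intersection is a single reduced point.

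\textbf{Step 2.} Identify each zero locus $V(f_i)$ with an effective Cartier divisor of class $d_iH+e_iK$. This follows because $\mathrm{Pic}(X)=\ZZ H\oplus\ZZ K$ and the section $f_i$ has bidegree $(d_i,e_i)$.

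\textbf{Step 3.} Use the fact that on a Cohen–Macaulay variety (here smooth) three Cartier divisors meeting properly (in dimension $0$) form a local complete intersection. Their intersection product is then represented by the zero cycle $\sum_P \ell(\cO_{Z,P})[P]$, where $Z$ is the intersection scheme. So the count with multiplicities equals
\[\deg\big((d_1H+e_1K)(d_2H+e_2K)(d_3H+e_3K)\big).\]

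\textbf{Step 4.} Expand the product using $K^2=0$ and $H^3=0$. The only surviving terms are those with exactly one factor of $K$ and two factors of $H$. These give
\[(e_1d_2d_3+d_1e_2d_3+d_1d_2e_3)\,H^2K,\]
which has degree $e_1d_2d_3+d_1e_2d_3+d_1d_2e_3$.

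The main obstacle is Step 3: justifying that the intersection-theoretic degree equals the scheme-theoretic length count when the intersection is zero-dimensional. I would cite Fulton's proper intersection theorem, using that on a smooth variety local rings of proper intersections of Cartier divisors are Cohen–Macaulay, so the intersection multiplicities are the lengths.

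An elementary alternative is to verify the formula by deforming the $f_i$ to products of generic linear forms in $(x,y,z)$ and $(s,u)$ and counting directly, then invoking conservation of number. That route still needs the same flatness input, so I would simply cite Fulton.
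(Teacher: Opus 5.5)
Your proof is correct, and it is the same argument the paper relies on. The paper does not prove this lemma: it cites intersection theory on $\PP^2\times\PP^1$ (Shafarevich), and its example with products of generic linear forms is exactly your Step 4 expansion of $(d_1H+e_1K)(d_2H+e_2K)(d_3H+e_3K)$, with Fulton's Proposition 7.1 (multiplicities equal lengths for a proper intersection of Cartier divisors on a Cohen--Macaulay space) filling in the step the paper treats as a black box.
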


We refer the readers to \cite[Chapter 4, Example 4.9]{Sha13} for its general form and more details on intersection theory.
For this exposition, we will assume \cref{lemma:bihom-bezout} without a proof.
For readers not familiar with intersection theory, a helpful example is when each $f_i$ is a product of $d_i$ generic linear functions in $x,y,z$ and $e_i$ generic linear functions in $s,u$.
In this example, the zero locus of $f_i$ is of the form
\[(\ell_1\times \PP_{\FF}^1)\cup\cdots\cup (\ell_{d_i}\times \PP_{\FF}^1)\cup (\PP_{\FF}^2\times \{p_1\})\cup\cdots\cup (\PP_{\FF}^2\times \{p_{e_i}\})\]
where $\ell_1,\ldots, \ell_{d_i}$ are generic lines in $\PP_{\FF}^2$ and $p_1,\ldots, p_{e_i}$ are generic points in $\PP_{\FF}^1$.
It is then easy to see that there are exactly $e_1d_2d_3+d_1e_2d_3+d_1d_2e_3$ common zeroes of $f_1$, $f_2$ and $f_3$.
Miao and Xie provide an elementary proof of \cref{lemma:mx-improved-bezout} without using the bihomogeneous B\'ezout's theorem, and interested readers can see \cite[Step III in the proof of Theorem 2.1]{MX26} for details.

We now see how \cref{lemma:mx-improved-bezout} follows immediately from \cref{lemma:bihom-bezout}.
\begin{proof}
    Bihomogenize $f$ as $\Tilde{f}$ via
    \[\Tilde{f}(x,y,z,s,u) = z^{\deg_{x,y}f}u^{\deg_sf}f\left(\frac{x}{z},\frac{y}{z},\frac{s}{u}\right)\]
    and bihomogenize $g$ as $\Tilde{g}$ as well.
    Suppose that $\Tilde{\ell}=\{(t,at+b,a):t\in \FF\}$ is a line contained in the common zero locus of $f$ and $g$.
    Then this line extends to 
    \[\bar{\Tilde{\ell}}\coloneqq\{([t_0:at_0+bt_1:t_1], [a:1]):[t_0:t_1]\in \PP_{\FF}^1\}\]
    in $\PP_{\FF}^2\times \PP_{\FF}^1$, and it lies in the common zero locus of $\Tilde{f}$ and $\Tilde{g}$.
    For a generic linear function $h$ in $x,y,z$, we thus have that $h$ has exactly one zero on $\bar{\Tilde{\ell}}$.
    Furthermore, as there are only finitely many lifted lines in the common zero locus of $f$ and $g$, we may choose $h$ so that different $\bar{\Tilde{\ell}}$'s intersect the zero locus of $h$ at different points.
    As a consequence, the number of lifted lines in the common zero locus of $f$ and $g$ is upper bounded by the number of intersections of the zero loci of $\Tilde{f}$, $\Tilde{g}$ and $h$ in $\PP^2_{\FF}\times \PP^1_{\FF}$.

    Note that $h$ is bihomogeneous of degree $(1,0)$.
    Therefore by \cref{lemma:bihom-bezout}, the number of intersections may be upper bounded by $\deg_{s,u}\Tilde{f}\deg_{x,y,z}\Tilde{g}+\deg_{x,y,z}\Tilde{f}\deg_{s,u}\Tilde{g}+0 = \deg_sf\deg_{x,y}g+\deg_{x,y}f\deg_sg$.
\end{proof}

\subsection{Dealing with exceptional factors}
The previous subsection works under the assumption that $f$ is coprime to $Df = (\partial_x+s\partial_y)f$.
In this subsection, we will deal with the possible common factors of $f$ and $Df$ and the lifted lines within.

Let $h$ be an irreducible factor of $f$ and $Df$.
To understand the lifted lines in the zero locus of $h$, let $\cL(h)$ be the set of $(a,b)\in \FF^2$ for which $h(t,at+b,a)=0$ for all $t\in\FF$.
We may write $f = hq$ for some polynomial $q$, and we have $Df = hDq+qDh$, showing that $h$ divides $qDh$.
This shows that either $h$ divides $q$ or $h$ divides $Dh$.
The former is easy to deal with: if $h$ divides $q$, then $h^2\mid f$.
However, we may simply replace $f$ with $f/h$, and it will still vanish on all the lines in $\Tilde{\cL_r}$ identically.
More generally, we may assume that $f$ is square-free at the beginning, which means $h$ cannot divide $q$.
Therefore we just need to deal with the case when $h$ divides $Dh$.

Since $\deg_{x,y}Dh\leq \deg_{x,y}h-1$, this forces $Dh\equiv 0$.
Intuitively, this means that $h$ is constant when restricted to any lifted line; this is literally true when the characteristic of $\FF$ is zero, and can be made rigorous in general if $\deg_{x,y}h<\fp$.
This allows us to ``descend'' $h$ to a polynomial on the space of lines whose zero locus is simply $\cL(h)$.
More importantly, we will be able to bound the degree of the polynomial by $\deg_s h$ instead of $\deg h$.
The argument in this paragraph is made rigorous in the following lemma.

\begin{lemma}\label{lemma:mx-descend-poly}
    Let $h\in\FF[x,y,s]$ be a polynomial with $Dh\equiv 0$.
    If $\deg_{x,y}h<\fp$, then $\cL(h)$ is precisely the zero locus of the polynomial $\underline{h}(a,b) \coloneqq h(0,b,a)$.
    Moreover, $\deg \underline{h}= \deg_s h$.
\end{lemma}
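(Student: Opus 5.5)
We need to prove: if $Dh\equiv 0$ and $\deg_{x,y}h<\fp$, then $h(t,at+b,a)$ is constant in $t$, equal to $h(0,b,a)$. Hence $(a,b)\in\cL(h)$ iff $\underline h(a,b)=0$. We also need $\deg\underline h=\deg_s h$.

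The plan is to consider the restriction $\phi(t)\coloneqq h(t,at+b,a)\in\FF[t]$ for fixed $(a,b)$. By the chain rule for formal derivatives,
\[\phi'(t)=(\partial_x h)(t,at+b,a)+a(\partial_y h)(t,at+b,a)=(Dh)(t,at+b,a)=0.\]
Since $\deg\phi\leq\deg_{x,y}h<\fp$, every coefficient $c_j$ of $t^j$ with $1\le j\le\deg\phi$ satisfies $jc_j=0$ with $j\not\equiv0\pmod{\fp}$, so $c_j=0$. Thus $\phi$ is the constant $\phi(0)=h(0,b,a)=\underline h(a,b)$. This gives the first claim: $h$ vanishes on the lifted line of $(a,b)$ iff $\underline h(a,b)=0$.

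For the degree, the cleanest route is to show that $h$ is in fact a polynomial in $s$ and $y-sx$. Define $H(s,w)\coloneqq h(0,w,s)$, so $\underline h(a,b)=H(a,b)$. By the constancy just shown, applied for every $(a,b)$ and hence as a polynomial identity in $\FF[t,a,b]$ (valid since $\FF$ is infinite), we get $h(t,at+b,a)=H(a,b)$. Substituting $t=x$, $a=s$, $b=y-sx$ gives $h(x,y,s)=H(s,y-sx)$.

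Now write $H=\sum_{i,j}c_{ij}s^i w^j$. Then $h=\sum c_{ij}s^i(y-sx)^j$. The monomial $s^iy^{j-k}(sx)^k$ has $s$-degree $i+k$ and spatial degree $j$, and distinct $(i,j,k)$ give distinct monomials in $x,y,s$, so there is no cancellation. Taking $k=j$, we get $\deg_s h=\max_{c_{ij}\neq0}(i+j)=\deg H=\deg\underline h$.

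The only delicate step is upgrading pointwise constancy to the identity $h=H(s,y-sx)$, which uses that $\FF$ is infinite (it is algebraically closed). The hypothesis $\deg_{x,y}h<\fp$ is used exactly once, to kill the derivative coefficients.
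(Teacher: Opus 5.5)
Your proposal is correct and follows essentially the same route as the paper: restrict $h$ to the lifted line, identify the $t$-derivative with $Dh$ evaluated there, use $\deg<\fp$ to force constancy, then deduce $h(x,y,s)=\underline h(s,y-sx)$ and read off the $s$-degree. The differences are cosmetic. You fix $(a,b)$ and upgrade to a polynomial identity using that $\FF$ is infinite, whereas the paper works directly in $\FF[t,b,a]$. You also spell out why the top $s$-degree monomials $s^{i+j}x^j$ cannot cancel, a step the paper leaves implicit.
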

\begin{proof}
    Let $g\in \FF[t,b,a]$ be defined by $g(t,b,a)\coloneqq h(t,at+b,a)$.
    It is an easy computation to see that $\partial_tg(t,b,a) = (Dh)(t,at+b,a)\equiv 0$.
    Now we may write $g(t,b,a)$ as 
    \[g(t,b,a) = \sum_{i=0}^{\deg_tg}c_i(a,b)t^i\]
    where each $c_i$ is a bivariate polynomial.
    Since $\partial_tg\equiv 0$, we see that
    \[0\equiv \partial_tg(t,b,a) = \sum_{i=1}^{\deg_tg}ic_i(a,b)t^{i-1}.\]
    Since $\deg_tg\leq \deg_{x,y}h<\fp$, this shows that $c_i(a,b)\equiv 0$ for every $i\geq 1$.
    In other words, $g(t,b,a)$ only depends on $a,b$ but not $t$.
    Therefore
    \[(a,b)\in\cL(h)\Leftrightarrow h(t,at+b,a)=0\;\forall t\in\FF\Leftrightarrow g(t,b,a)=0\;\forall t\in \FF \Leftrightarrow g(0,b,a)=0\Leftrightarrow h(0,b,a)=0.\]
    This shows that $\cL(h)$ is precisely the zero locus of $\underline{h}$.

    For the last claim, since $h(t,at+b,a) = \underline{h}(a,b)$ for every $a,b,t\in \FF$, this shows that we have the polynomial identity $h(x,y,s) = \underline{h}(s,y-sx)$.
    Therefore by measuring the degree in $s$, we have $\deg_sh = \deg \underline{h}$.
\end{proof}
\begin{remark}

    This is the first place in the proof where characteristic shows up, and this is necessary.
    For example, when $\fp<\infty$, the lemma fails for $h(x,y,s) = (y^\fp-sx^\fp)-(y-sx)$.
    For this polynomial, one can verify that $\cL(h) = \FF_{\fp}^2$, which is not contained in the zero locus of any linear function.
\end{remark}

\cref{lemma:mx-descend-poly} allows us to bound the number of rich lines that lift to the zero locus of $h$.
Since $\cL(h)$ lies in the zero locus of $\underline{h}$ in the space of lines, we will use the point-line duality and map the points in $\cP$ to lines.
The rich lines become rich points under the duality, and we just need to bound the number of rich points trapped in the zero locus of $\underline{h}$.
This can be done by a standard application of B\'ezout's theorem as shown below.

\begin{lemma}\label{lemma:mx-exceptional-lines}
    Let $r\geq 2$ be a positive integer.
    Let $\cL$ be a set of lines and let $\cP_r$ be the set of $r$-rich points for $\cL$.
    For any nonzero polynomial $g\in \FF[x,y]$, we have
    \[\abs{\{(x,y)\in \cP_r:g(x,y)=0\}}\leq \frac{\abs{\cL}\deg g}{r-1}.\]
\end{lemma}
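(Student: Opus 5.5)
Let $Z=\{(x,y)\in\cP_r:g(x,y)=0\}$. I will double count incidences between $Z$ and $\cL$, splitting the lines of $\cL$ by whether they lie inside the zero locus of $g$. Every point of $Z$ lies on at least $r$ lines of $\cL$, so $I(Z,\cL)\ge r\abs{Z}$. First I may assume $g$ is square-free, since replacing $g$ by its radical does not change the zero locus and does not increase $\deg g$.

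Let $\cL_0\subseteq\cL$ be the lines contained in $\{g=0\}$; these correspond to linear factors of $g$, so $\abs{\cL_0}\le\deg g$. For any line $\ell\in\cL\setminus\cL_0$, restricting $g$ to $\ell$ gives a nonzero univariate polynomial of degree at most $\deg g$, so $\ell$ meets $Z$ in at most $\deg g$ points. Hence
\[I(Z,\cL\setminus\cL_0)\le\abs{\cL}\deg g.\]
For the lines in $\cL_0$, any two distinct lines meet in at most one point. A point $p\in Z$ lying on $j\ge1$ lines of $\cL_0$ gets at least $r-j$ incidences from $\cL\setminus\cL_0$. I need to handle these incidences without losing the $r-1$ in the denominator.

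The cleanest route is to count, for each $p\in Z$, at least $r-1$ incidences coming from lines \emph{not} in $\cL_0$, or else to charge $p$ to an intersection between lines of $\cL_0$. Concretely: if $p$ lies on at most one line of $\cL_0$, it has at least $r-1$ incidences with $\cL\setminus\cL_0$. If $p$ lies on $j\ge2$ lines of $\cL_0$, it is an intersection point of lines in $\cL_0$. The number of such points is at most $\binom{\abs{\cL_0}}{2}$, and also at most $\abs{\cL_0}\cdot(\deg g-1)$, since each line of $\cL_0$ meets the other $\le\deg g-1$ components. Then
\[(r-1)\abs{Z_1}\le\abs{\cL}\deg g,\]
where $Z_1$ is the set of points of the first kind, and the points of the second kind must be absorbed separately. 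Here I use that such a point has at least $j\ge 2$ lines of $\cL_0$ through it. A cleaner way to do this is to give every $p\in Z$ weight by its incidences with all of $\cL$, and to bound the incidences between $Z$ and $\cL_0$ by $\abs{Z}+\text{(pairs)}$. That is,
\[I(Z,\cL_0)\le\abs{Z}+\sum_p\bigl(j_p-1\bigr)_+\le\abs{Z}+\binom{\abs{\cL_0}}{2}.\]
Combining, $r\abs{Z}\le\abs{\cL}\deg g+\abs{Z}+\binom{\abs{\cL_0}}{2}$. This gives $(r-1)\abs{Z}\le\abs{\cL}\deg g+\binom{\abs{\cL_0}}{2}$, and since $\binom{\abs{\cL_0}}{2}$ is not obviously absorbed, I must refine.

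The main obstacle is exactly this last term. The refinement I would try first: for a line $\ell\in\cL_0$, instead of the crude bound, note that $\ell$ contains at most $\deg g-1$ points of $Z$ lying on another component. Write $g=\ell\cdot g'$; the points of $Z$ on $\ell$ that lie on no other line of $\cL_0$ already have at least $r-1$ incidences from outside $\cL_0$. For the remaining points, I would apply induction on $\deg g$. Factor off a linear factor $\ell_0\in\cL_0$ and apply the statement to $g/\ell_0$ and the set $\cL\setminus\{\ell_0\}$, where points remain $(r-1)$-rich. The points of $Z$ on $\ell_0$ are then handled directly: each is $r$-rich, so it has at least $r-1$ other lines through it, and distinct points of $\ell_0$ use distinct such lines. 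This gives at most $\abs{\cL}/(r-1)$ points on $\ell_0$. Balancing the change of $r$ in the induction is delicate, so alternatively I would apply the non-inductive version directly to $\ell_0$ and to $g/\ell_0$ with $\cL$ unchanged, using subadditivity of the bound in $\deg g$. Each linear factor $\ell_0$ contributes at most $\abs{\cL}/(r-1)$ points, since the $r-1$ other lines through distinct points of $\ell_0$ are distinct. The factor $g_1$ with no linear factors in $\cL$ contributes at most $\abs{\cL}\deg g_1/r\le\abs{\cL}\deg g_1/(r-1)$ by the first count, because then $\cL_0=\emptyset$. Summing over the factorization gives the claimed $\abs{\cL}\deg g/(r-1)$.
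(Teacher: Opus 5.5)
Your final argument is correct and uses the same two facts as the paper's proof: a line of $\cL$ not contained in $\{g=0\}$ meets it in at most $\deg g$ points, and a line $\ell_0\in\cL$ contained in $\{g=0\}$ carries at most $\frac{\abs{\cL}-1}{r-1}$ points of $\cP_r$, because the $r-1$ other lines through distinct such points are distinct. The only difference is bookkeeping: you partition the points of $Z$ by factor of the radical of $g$, obtaining $\abs{\cL_0}\frac{\abs{\cL}-1}{r-1}+\frac{\abs{\cL}\deg g_1}{r}$, whereas the paper partitions the incidences in a single double count $r\abs{Z}\le \abs{\cL}\deg g+\frac{\abs{\cL}\deg g}{r-1}$; a clean write-up should keep only this final decomposition and drop the abandoned $\binom{\abs{\cL_0}}{2}$ and shifting-$r$ attempts.
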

\begin{proof}
    Let $\cP$ be the subset of $\cP_r$ consisting of points at which $g$ vanishes.
    By the definition of $\cP_r$, we have $I(\cP,\cL)\geq r\abs{\cP}$.
    To upper bound the size of $\cP$, it thus suffices to upper bound the number of incidences between $\cP$ and $\cL$.

    Any line that is not contained in the zero locus of $g$ intersects it at most $\deg g$ times by B\'ezout's theorem.
    Therefore those lines contribute at most $\abs{\cL}\deg g$ incidences.
    On the other hand, any line $\ell$ contained in the zero locus of $g$ contains at most $\frac{\abs{\cL}-1}{r-1}$ points from $\cP_r$ as each $r$-rich point on $\ell$ is on $r-1$ other lines in $\cL$.
    There are at most $\deg g$ lines in the zero locus of $g$, contributing at most $\abs{\cL}\deg g/(r-1)$ incidences.
    Adding the two bounds gives
    \[r\abs{\cP}\leq I(\cP,\cL)\leq \frac{r\abs{\cL}\deg g}{r-1},\]
    and rearranging gives the desired bound.    
\end{proof}

\subsection{Finishing the proof}
We have now introduced all ingredients in Miao and Xie's proof.
In this subsection, we show how these prove \cref{thm:mx-bound-rich-line}.

\begin{proof}[Proof of \cref{thm:mx-bound-rich-line}]
    We first deal with the main case $r\leq 2\abs{\cP}^{1/2}$.
    Let $\Tilde{\cL_r}$ be the set of lifted lines.
    Let $e = \left\lceil \frac{2\abs{\cP}}{r^2}\right\rceil$, $m=2e$ and $d=rm-1=2re-1$.
    Then
    \[\binom{d+2}{2}(e+1) = \frac{(d+2)(d+1)(e+1)}{2} > \frac{(2re)\cdot (rm) \cdot (e+1)}{2}\geq  2(e+1)m\abs{\cP}\]
    and
    \[\left(e+\frac{m+1}{2}\right)m\abs{\cP} = \left(2e+\frac{1}{2}\right)m\abs{\cP}<2(e+1)m\abs{\cP}.\]
    \cref{lemma:mx-parameter-counting} and \cref{lemma:mx-vanishing-lemma} together give a nonzero polynomial $f\in \FF[x,y,s]$ with $\deg_{x,y} f\leq d$, $\deg_sf\leq e$ and $f$ vanishes on all the lines in $\Tilde{\cL_r}$.
    By removing repeated factors, we may assume that $f$ is square-free.

    Recall that we defined $D=\partial_x+s\partial_y$.
    For any irreducible factor $h$ of $f$, say it is \emph{exceptional} if $Dh\equiv 0$, and it is \emph{normal} otherwise.
    Let $f_{\textup{normal}}$ be the product of normal factors of $f$, and $f_{\textup{ex}}$ be the product of exceptional factors of $f$.
    It is clear that each line in $\Tilde{\cL_r}$ is contained in the union of the zero loci of $f_{\textup{normal}}$ and $f_{\textup{ex}}$.
    By \cref{lemma:mx-improved-bezout} applied to $f_{\textup{normal}}$ and $Df_{\textup{normal}}$, there are at most $(2d-1)\deg_s f_{\textup{normal}}+d$ lines in $\Tilde{\cL_r}$ contained in the zero locus of $f_{\textup{normal}}$.
    It remains to bound the number of $\ell\in \cL_r$ with $\Tilde{\ell}$ lying completely in the zero  locus of $f_{\textup{ex}}$.

    Let $\cL_{\textup{ex}}$ be the set of lines in $\cL_r$ that lift within the zero locus of $f_{\textup{ex}}$.
    Consider the point-line duality $\phi$ that sends lines $\{(t,at+b):t\in\FF\}$ to points $(a,b)\in \FF^2$ and points $p=(p_x,p_y)\in \FF^2$ to lines $\{(a,b)\in \FF^2:p_y=ap_x+b\}$.
    Then it is clear that $\phi(\cL_{\textup{ex}})\subseteq \cL(f_{\textup{ex}})$.
    Note that we assume $r\leq 2\abs{\cP}^{1/2}$ and so $r\leq 4\abs{\cP}/r$.
    The theorem statement also assumes $r\geq 12\abs{\cP}/\mathfrak{p}$.
    Therefore
    \[\deg_{x,y}f_{\textup{ex}}\leq d<2re<\frac{4\abs{\cP}}{r}+2r\leq \frac{12\abs{\cP}}{r}\leq \mathfrak{p}.\]
    \cref{lemma:mx-descend-poly} then shows that $\phi(\cL_{\textup{ex}})$ lies in the zero locus of $\underline{f_{\textup{ex}}}$.
    Moreover, by point-line duality, we know that $\phi(\cL_{\textup{ex}})$ is a set of $r$-rich points for $\phi(\cP)$.
    By \cref{lemma:mx-exceptional-lines}, we thus have
    \[\abs{\cL_{\textup{ex}}}\leq \frac{\abs{\phi(\cP)}\deg\underline{f_{\textup{ex}}}}{r-1} = \frac{\abs{\cP}\deg_s f_{\textup{ex}}}{r-1}\]
    where the last equality holds by \cref{lemma:mx-descend-poly}.

    Adding up, we get
    \[\abs{\cL_r}\leq (2d-1)\deg_sf_{\textup{normal}}+\frac{\abs{\cP}\deg_sf_{\textup{ex}}}{r-1}+d<4re(\deg_sf_{\textup{normal}}+\deg_sf_{\textup{ex}})+2re\leq 4re^2+2re.\]
    Plugging in $e<\frac{2\abs{\cP}}{r^2}+1$ gives
    \[\abs{\cL_r}\leq \frac{16\abs{\cP}^2}{r^3}+\frac{20\abs{\cP}}{r}+6r\leq \frac{16\abs{\cP}^2}{r^3}+\frac{44\abs{\cP}}{r}.\]

    For the remaining case $r>2\abs{\cP}^{1/2}$, the bound follows from a simple combinatorial argument.
    Let $t = \abs{\cL_r}$ and list the lines $\ell_1,\ldots, \ell_t$ in $\cL_r$ in any order.
    Each $\ell_i$ contains at least $r$ points from $\cP$, and at most $(i-1)$ of them are also on one of the lines $\ell_1,\ldots, \ell_{i-1}$.
    Therefore there are at least $r-i+1$ points in $\cP$ that lie in $\ell_i$ and none of the lines $\ell_1,\ldots, \ell_{i-1}$. 
    Summing this over $i$, we get
    \[\sum_{i=1}^{t}\max\left\{r-i+1,0\right\}\leq \abs{\cP}.\]
    Now if $t\geq \left\lceil \frac{2\abs{\cP}}{r}\right\rceil$, then by considering just the first $\left\lceil \frac{2\abs{\cP}}{r}\right\rceil$ summands,
    \[\sum_{i=1}^{t}\max\left\{r-i+1,0\right\}> \left\lceil \frac{2\abs{\cP}}{r}\right\rceil\left(r-\frac{2\abs{\cP}}{r}\right)\geq \frac{2\abs{\cP}}{r}\cdot \frac{r}{2} = \abs{\cP},\]
    which is a contradiction.
    Therefore $\abs{\cL_r}=t<\left\lceil \frac{2\abs{\cP}}{r}\right\rceil$, which shows that $\abs{\cL_r}<\frac{2\abs{\cP}}{r}$.
\end{proof}

Finally, we deduce \cref{thm:st} using \cref{thm:mx-bound-rich-line}.

\begin{proof}[Proof of \cref{thm:st} using \cref{thm:mx-bound-rich-line}]
    With a slight abuse of notation, for any positive integer $r$, let $\cL_r$ be the set of lines in $\cL$ that contain at least $r$ points in $\cP$.
    Let $T_1, T_2$ be two positive integers.
    It is clear that
    \[I(\cP,\cL)\leq \sum_{r=1}^{T_2-1}\abs{\cL_r} + I(\cP,\cL_{T_2})\]
    as any line in $\cL$ containing exactly $i$ points is either counted $i$ times in the first sum or is included in $\cL_{T_2}$.
    By bounding $\abs{\cL_r}\leq \abs{\cL}$ for all $r\leq T_1$, we get
    \begin{equation}\label{eq:mx}
        I(\cP,\cL) \leq T_1\abs{\cL}+\sum_{r=T_1+1}^{T_2-1}\abs{\cL_r}+I(\cP,\cL_{T_2})
    \end{equation}
    where the middle sum is the empty sum if $T_1+1>T_2-1$.

    Set $T_2 = \lceil2\abs{\cP}^{1/2}\rceil$.
    By the proof of \cref{thm:mx-bound-rich-line}, we know that $\abs{\cL_{T_2}}\lesssim \frac{\abs{\cP}}{T_2}\lesssim \abs{\cP}^{1/2}$.
    Hence
    \begin{align*}
        I(\cP,\cL_{T_2})\leq &\sum_{p\in \cP}\abs{\left\{\ell\in \cL_{T_2}:\ell\ni p\right\}} \leq \sum_{p\in \cP}\left(1+\abs{\left\{\{\ell,\ell'\}\in \binom{\cL_{T_2}}{2}:p\in \ell,\ell'\right\}}\right)\\\leq& \abs{\cP}+\binom{\abs{\cL_{T_2}}}{2}\lesssim \abs{\cP}
    \end{align*}
    as any two lines intersect at most once.

    It remains to bound the first two terms of \cref{eq:mx}.
    To apply \cref{thm:mx-bound-rich-line}, we will take $T_1\geq \max\{2,\frac{12\abs{\cP}}{\fp}\}$.
    Assuming this is the case, we have
    \[\sum_{r=T_1+1}^{T_2-1}\abs{\cL_r}\lesssim \sum_{r=T_1+1}^{T_2-1}\left(\frac{\abs{\cP}^2}{r^3}+\frac{\abs{\cP}}{r}\right)\lesssim \sum_{r=T_1+1}^{T_2-1} \frac{\abs{\cP}^2}{r^3}\lesssim \frac{\abs{\cP}^2}{T_1^2}\]
    where we use that $\frac{\abs{\cP}}{r}\lesssim \frac{\abs{\cP}^2}{r^3}$ when $r<T_2$.
    Hence
    \[I(\cP,\cL)\lesssim T_1\abs{\cL}+\frac{\abs{\cP}^2}{T_1^2}+\abs{\cP}\]
    for any $T_1\geq \max\{2,\frac{12\abs{\cP}}{\fp}\}$.
    We will take $T_1$ to be the least positive integer satisfying this inequality and also $\frac{\abs{\cP}^2}{T_1^2}\leq T_1\abs{\cL}$.
    Then we have
    \[T_1\lesssim 1+\frac{\abs{\cP}}{\fp}+\frac{\abs{\cP}^{2/3}}{\abs{\cL}^{1/3}},\]
    and so
    \[I(\cP,\cL)\lesssim T_1\abs{\cL}+\abs{\cP}\lesssim \abs{\cP}+\abs{\cL}+\abs{\cP}^{2/3}\abs{\cL}^{2/3}+\frac{\abs{\cP}{\abs{\cL}}}{\fp}.\qedhere\]
\end{proof}
\begin{remark}
    Optimizing the constant for $\abs{\cP}^{2/3}\abs{\cL}^{2/3}$ using the above argument, one can get arbitrarily close to $3\sqrt[3]{2}$ at the cost of arbitrarily large constants for other terms.
    This is worse than the constant $3$ produced by Lewko's proof.
    
    On the other hand, Lewko's bound shows that the number of $r$-rich lines can be bounded as follows:
    \[r\abs{\cL_r}\leq I(\cP,\cL_r)\leq \abs{\cP}+\abs{\cL_r}+3\abs{\cP}^{2/3}\abs{\cL_r}^{2/3}+\frac{2\abs{\cP}\abs{\cL_r}}{\fp},\]
    which only gives roughly an upper bound of $\frac{27\abs{\cP}^2}{r^3}+\frac{2\abs{\cP}}{r}$ even when $r$ is much larger than $\max\{2,\frac{2\abs{\cP}}{\fp}\}.$
    Therefore the two proofs give bounds of incomparable strengths.
\end{remark}

\section{Lewko's proof}\label{sec:lewko}
We present Lewko's proof of \cref{thm:st} in this section.
As mentioned in the introduction, \cref{thm:st} follows immediately from \cref{thm:lewko-st-with-param}.
We first prove this implication.
\begin{proof}[Proof of \cref{thm:st} assuming \cref{thm:lewko-st-with-param}]
    If $\abs{\cP}\abs{\cL}<\fp^3$, taking $d=\abs{\cP}^{1/3}\abs{\cL}^{1/3}$ in \cref{thm:lewko-st-with-param} gives
    \[I(\cP,\cL)\leq \abs{\cP}+\abs{\cL}+3\abs{\cP}^{2/3}\abs{\cL}^{2/3}.\]
    Otherwise, take $d$ arbitrarily close to $\fp$.
    Then we get
    \[I(\cP,\cL)\leq \abs{\cP}+\abs{\cL}+\fp^2+\frac{2\abs{\cP}\abs{\cL}}{\fp}\leq \abs{\cP}+\abs{\cL}+\abs{\cP}^{2/3}\abs{\cL}^{2/3}+\frac{2\abs{\cP}\abs{\cL}}{\fp}.\]
    Therefore in any case, we have
    \[I(\cP,\cL)\leq \abs{\cP}+\abs{\cL}+3\abs{\cP}^{2/3}\abs{\cL}^{2/3}+\frac{2\abs{\cP}\abs{\cL}}{\fp}.\qedhere \]
\end{proof}
Therefore we will focus on proving \cref{thm:lewko-st-with-param} for the rest of the section.
\subsection{Parameter counting}
Let $\cP$ be a set of points and $\cL$ be a set of lines in $\FF^2$.
Let $\Tilde{\cL}$ be the set of lifted lines.
As in Miao and Xie's proof, we begin by constructing a polynomial of small degree that vanishes identically on every line in $\Tilde{\cL}$.
In Lewko's proof, this is done by a simple parameter counting argument.

\begin{lemma}[Parameter counting]\label{lemma:param-counting}
    Let $\Tilde{\cL}$ be the set of lifted lines in $\FF^3$.
    Let $d,e$ be any non-negative integers such that $(d+2)(e+1)>2\abs{\Tilde{\cL}}$.
    Then there exists a nonzero polynomial $f\in \FF[x,y,s]$ vanishing identically on all lines in $\Tilde{\cL}$ with $\deg_{x,y}f\leq d$ and $\deg_s f\leq e$.
\end{lemma}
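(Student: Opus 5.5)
The argument is a dimension count against linear constraints, in the same spirit as \cref{lemma:mx-parameter-counting}. The space $V$ of polynomials $f\in\FF[x,y,s]$ with $\deg_{x,y}f\leq d$ and $\deg_s f\leq e$ has dimension $\binom{d+2}{2}(e+1)$, which is far more than we need. So I will first pass to a subspace that is cheaper to constrain. I would work with the subspace $W\subseteq V$ of polynomials of the form $f(x,y,s)=\sum_{j=0}^{d}\sum_{k=0}^{e}c_{jk}\,x^{j}s^{k}$ together with one extra $y$-direction. Concretely, take the span of $x^j s^k$ for $0\leq j\leq d$ and of $y\,x^{j'}s^k$ for $0\leq j'\leq d-1$. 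This gives $(2d+1)(e+1)$ monomials, already enough. Simpler still would be $W=\{x^js^k\}\cup\{y s^k\}$.

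The count behind either choice is as follows. Each lifted line $\Tilde{\ell}=\{(t,at+b,a)\}$ imposes the condition that the univariate polynomial $f(t,at+b,a)\in\FF[t]$ vanishes identically. Since $\deg_{x,y}f\leq d$, this polynomial has degree at most $d$ in $t$. Requiring all $d+1$ of its coefficients to vanish gives $d+1$ linear conditions per line, for a total of $(d+1)\abs{\Tilde{\cL}}$ conditions. Against the full space $V$ this would require $\binom{d+2}{2}(e+1)>(d+1)\abs{\Tilde\cL}$, i.e.\ $\frac{(d+2)(e+1)}{2}>\abs{\Tilde\cL}$. That is exactly the hypothesis $(d+2)(e+1)>2\abs{\Tilde{\cL}}$. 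So no subspace is needed at all: the plan is simply to compare $\dim V=\binom{d+2}{2}(e+1)$ with $(d+1)\abs{\Tilde{\cL}}$.

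The steps, in order, are these.
\begin{enumerate}
\item Compute $\dim V=\binom{d+2}{2}(e+1)$, as in the proof of \cref{lemma:mx-parameter-counting}.
\item For each $\ell$ with slope $a$ and intercept $b$, note that $f\mapsto f(t,at+b,a)$ is a linear map from $V$ to polynomials in $t$ of degree at most $d$. This holds because each monomial $x^\alpha y^\beta s^\gamma$ maps to $t^\alpha(at+b)^\beta a^\gamma$, which has degree $\alpha+\beta\leq d$. Vanishing on $\Tilde\ell$ is therefore at most $d+1$ linear conditions.
\item Observe that
\[\binom{d+2}{2}(e+1)=\frac{(d+1)(d+2)(e+1)}{2}>(d+1)\abs{\Tilde\cL},\]
so the kernel of all these conditions is nontrivial.
\item Conclude that a nonzero $f$ in this kernel satisfies the degree bounds and vanishes identically on every lifted line, since $f(t,at+b,a)\equiv 0$ as a polynomial, and hence as a function, on each line.
\end{enumerate}

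There is no real obstacle. The only point needing care is step 2: the degree in $t$ of the restriction must be bounded by the spatial degree alone, with the slope variable contributing only a constant $a$ on each line. Pinning this down is what yields the factor $d+1$ rather than $d+e+1$.
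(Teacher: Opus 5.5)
Your final plan (steps 1--4) is correct and is exactly the paper's argument. It compares $\dim V=\binom{d+2}{2}(e+1)$ with the $(d+1)\abs{\Tilde{\cL}}$ linear conditions given by the coefficients of $f(t,at+b,a)$, and this comparison is equivalent to the hypothesis $(d+2)(e+1)>2\abs{\Tilde{\cL}}$. You should delete the opening subspace digression rather than leave it in, because its claim that $(2d+1)(e+1)$ monomials are ``already enough'' is false: such a subspace still faces $d+1$ conditions per line, so the count only works for fewer than $2(e+1)$ lines, whereas the hypothesis allows nearly $\frac{(d+2)(e+1)}{2}$ lines, which is more once $d\geq 2$.
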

\begin{proof}
    The space of polynomials with spatial degree at most $d$ and slope degree at most $e$ has dimension exactly
    \[\binom{d+2}{2}(e+1).\]
    Asking it to vanish identically on a line in $\Tilde{\cL}$ within any given slice imposes $d+1$ linear constraints.
    Indeed, any line in $\Tilde{\cL}$ is of the form $\{(t,at+b,a):t\in \FF\}$, and so $f(t,at+b,a)$ can be written as a polynomial of degree at most $d$.
    Asking $f$ to vanish on the line is the same as asking the $d+1$ coefficients to be zero.
    Therefore there is a nonzero polynomial $f$ in the space vanishing on all lines if
    \[\binom{d+2}{2}(e+1) > (d+1)\abs{\cL}.\]
    By the choice of the parameters $(d,e)$, we know this inequality holds.
\end{proof}

\subsection{From incidence to multiplicity}
Suppose we have a polynomial $f$ produced from \cref{lemma:param-counting}.
It is then clear that if $p$ is an $r$-rich point and $r>e\geq \deg_sf$, then $f$ must vanish identically on $\ell_p$.
Indeed, $\ell_p$ has $r$ distinct intersections with lines from $\Tilde{\cL}$ and thus contains $r$ points lying in the zero locus of $f$.
Since $\deg_s f\leq e<r$, this forces the entire line $\ell_p$ to lie in the zero locus of $f$.
It turns out that if $r$ is much larger than $e$, we can prove a much stronger statement.

Recall that $\mu_f(p,s)$ is the multiplicity of $f$ at $(p,s)$ along the line $\ell(p,s)$.
If $p$ is $r$-rich, then $\mu_f(p,s)=\infty$ holds for at least $r$ different values of $s$.
The following lemma uses this to show that $\mu_f(p,s)$ has to be large for any $s$.
In some sense, this is the converse of the vanishing lemma (\cref{lemma:mx-vanishing-lemma}) in Miao and Xie's proof.

\begin{lemma}[Incidence to multiplicity]\label{lemma:incidence-to-mult}
    Let $e$ be any nonnegative integer.
    Let $p$ be a point lying on $r$ lines in $\cL$ in the plane $\FF^2$, and let $\Tilde{\cL}$ be the set of lifted lines.
    Suppose that $f\in \FF[x,y,s]$ is a polynomial vanishing on all the lines in $\Tilde{\cL}$ with $\deg_sf\leq e$.
    Then $\mu_f(p,s)\geq r-e$ for any $s\in\FF$.
\end{lemma}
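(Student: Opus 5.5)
Fix $p=(p_x,p_y)$ and consider the bivariate polynomial
\[F(t,s)\coloneqq f(p_x+t,\,p_y+st,\,s)\in \FF[t,s].\]
By definition, $\mu_f(p,s_0)$ is the order of vanishing in $t$ at $t=0$ of $F(t,s_0)$. Write $F(t,s)=\sum_{i\geq 0}c_i(s)t^i$ with $c_i\in\FF[s]$. It then suffices to show that $c_i(s)\equiv 0$ as a polynomial in $s$ for every $i<r-e$. Indeed, for any $s_0\in\FF$ we would then have $t^{r-e}\mid F(t,s_0)$, which gives $\mu_f(p,s_0)\geq r-e$. If $r\leq e$ there is nothing to prove, so assume $r>e$.

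\textbf{Degree bound.} As in the proof of \cref{lemma:mx-parameter-counting}, each monomial $x^\alpha y^\beta s^\gamma$ of $f$ contributes $(p_x+t)^\alpha(p_y+st)^\beta s^\gamma$. In the expansion of $(p_x+t)^\alpha(p_y+st)^\beta$, the power of $s$ never exceeds the power of $t$. Hence $\deg c_i\leq e+i$.

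\textbf{Roots of the coefficients.} Let the $r$ lines of $\cL$ through $p$ have distinct slopes $a_1,\dots,a_r$. The $j$-th such line lifts to $\ell(p,a_j)$, and $f$ vanishes identically on it. So $F(t,a_j)\equiv 0$ in $t$, that is, $c_i(a_j)=0$ for all $i$ and $j$. This gives each $c_i$ at least $r$ distinct roots. If $i<r-e$, then $\deg c_i\leq e+i<r$, which forces $c_i\equiv 0$.

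This completes the argument: the only real point is the degree bound $\deg c_i\leq e+i$, which was already established earlier. Distinctness of the slopes holds because distinct lines through a common point have distinct slopes; lines are non-vertical by assumption.
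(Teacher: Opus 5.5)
Your proof is correct and is essentially the same as the paper's. Both expand $f(p_x+t,p_y+st,s)=\sum_i c_i(s)t^i$, use the bound $\deg c_i\leq e+i$, and observe that the $r$ distinct slopes through $p$ are roots of every $c_i$, which forces $c_i\equiv 0$ whenever $i<r-e$.
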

\begin{proof}
    Let $p = (p_x,p_y)$.
    Write $g(s,t) = f(p_x+t, p_y+st, s)$, and treat $g$ as a polynomial in $(\FF[s])[t]$.
    That is, write $g$ as 
    \[g(s,t) = \sum_{i=0}^{\deg_t g}c_i(s)t^i\]
    where each $c_i\in \FF[s]$.
    Now fix some $i=0,\ldots, \deg_tg$.
    By our assumptions, $g(s,t)$ is a zero polynomial in $t$ for $r$ different values of $s$, which implies that $c_i(s)$ has $r$ roots.
    We would like to show that this forces $c_i(s)\equiv 0$ when $i$ is small.
    To do this, we need to bound the degree of $c_i$.
    This was already done in the proof of \cref{lemma:mx-parameter-counting}, but we state it again to keep the proof independent.

    The key claim is that $\deg c_i \leq e+i$.
    To see this, notice that $g(s,t)$ is an $\FF$-linear combination of 
    \[(p_x+t)^{\alpha}(p_y+st)^{\beta}s^{\gamma}\]
    where $\gamma\leq e$.
    Notice that after expanding the first two factors, we always have that the degree of $s$ is at most the degree of $t$.
    Multiplying by $s^{\gamma}$ only increases the degree by at most $e$.
    Therefore $\deg c_i\leq e+i$.

    Now this immediately shows that $c_i(s)\equiv 0$ for every $i$ with $e+i<r$.
    Rearranging shows that $c_i(s)\equiv 0$ for every $i<r-e$, which shows that $\mu_f(p,s)\geq r-e$ for any $s\in \FF$ by definition.
\end{proof}

\begin{remark}
    This lemma is arguably the most magical step in Lewko's proof, and the author has little geometric intuition for why such a lemma could hold.
    Perhaps for some intuition, when $e=r-1$ the lemma shows that $\mu_f(p,s)\geq 1$ for all $s$, which is the same as saying $f$ vanishes identically on $\ell_p$.
    At the other extreme, when $e=0$, the polynomial $f$ only depends on the spatial component, and the lemma says that $\mu_f(p,s)\geq r$.
    This is the same as saying any polynomial vanishing on $r$ distinct lines through the origin must have multiplicity at least $r$ at the origin (in all directions).
\end{remark}

\subsection{Bounding the sum of multiplicities}

\cref{lemma:incidence-to-mult} allows us to bound incidences with some multiplicities computed in any slice, with a cost of $e$ incidences per point.
Since $e$ can be chosen to be at most $2\abs{\cL}/d$, we only lose $\frac{2\abs{\cP}\abs{\cL}}{d}$ going from incidences to multiplicities.
Therefore the goal is to bound $\sum_{p\in \cP}\mu_f(p,s)$ by, say, $d(d-1)$.
As the lower bound in \cref{lemma:incidence-to-mult} holds for any $s\in \FF$, we will pick a generic $s$ and then zoom into the $s$-th slice.

Fix some choice of $s$.
View $f_s(x,y)\coloneqq f(x,y,s)$ as a polynomial in $\FF[x,y]$.
Then $\mu_f(p,s)$ is simply the multiplicity of $f_s(x,y)$ at $p$ along the line $\{(p_x+t,p_y+st):t\in \FF\}$.
Notice that by a linear change of coordinates in $x$ and $y$, we can make those lines horizontal.
Therefore our goal is to bound the sum of multiplicities of $f_s$ at all the $p\in \cP$ along the horizontal direction.

\begin{figure}[h]
    \centering
    \begin{tikzpicture}[
    x=4cm, y=1.8cm,
    line cap=round, line join=round,
    font=\small,
    curve/.style={blue, very thick},
    horizontal/.style={gray!75, dashed, thick}
]

    \draw[horizontal]
        (-0.78,{11/16}) -- (1.58,{11/16});

    \draw[horizontal]
        (-0.78,0) -- (1.58,0);

    \draw[horizontal]
        (-0.78,-1) -- (1.58,-1);

    \draw[
        curve,
        domain=-0.60:1.45,
        samples=240,
        variable=\t
    ]
        plot (\t,{3*(\t)^4 - 4*(\t)^3});

    \node[blue,above left]
        at (1.45,{3*(1.45)^4 - 4*(1.45)^3}) {$f_s(x,y)=y-(3x^4-4x^3)$};

    \coordinate (P1) at (-0.5,{11/16});
    \coordinate (P3) at (0,0);
    \coordinate (P2) at (1,-1);

    \foreach \p in {P1,P3,P2}
        \fill (\p) circle[radius=2pt];

    \node[above right,xshift=2pt,yshift=2pt] at (P1)
        {$p_1$ with $\mu_f(p_1,s)=1$};

    \node[above right,xshift=2pt,yshift=2pt] at (P3)
        {$p_3$ with $\mu_f(p_3,s)=3$};

    \node[below,yshift=-3pt] at (P2)
        {$p_2$ with $\mu_f(p_2,s)=2$};

\end{tikzpicture}
    \caption{An example with three points $p_1,p_2,p_3$ and their corresponding multiplicities.}
    \label{fig:curve}
\end{figure}

It is natural to expect that although there may be infinitely many points with multiplicity $1$, there are only finitely many of them with $2$ and above.
The strategy is therefore to bound the sum of multiplicities for those finitely many ``critical'' points.
However, two technicalities must be dealt with.
First, if $f_s$ has a horizontal linear component, then there are infinitely many points with infinite multiplicity.
Even without horizontal linear components, $f_s$ might not be square-free, resulting in infinitely many points with multiplicities at least $2$.
Barring those two examples, one can indeed bound the sum of multiplicities given $\deg f_s<\fp$, as shown in the lemma below.

Before we state the lemma, we introduce the following notation.
For any $g\in \FF[x,y]$ and $p\in \FF^2$, let $\mu_g^{\textup{hor}}(p)$ be the multiplicity of $g$ at $p$ along the horizontal direction.
In other words, it is the largest non-negative integer $m$ so that $t^m$ divides $g(p_x+t,p_y)$ as polynomials in $t$.
If $g(p_x+t,p_y)$ is the zero polynomial in $t$, set $\mu_g^{\textup{hor}}(p)=\infty$.
\begin{lemma}\label{lemma:bound-sum-mult}
    Let $g\in \FF[x,y]$ be a square-free polynomial of degree $d<\fp$.
    Suppose that there are no factors of $g$ that only depend on $y$.
    Then
    \[\sum_{p\in \FF^2}\max\left\{\mu_g^{\textup{hor}}(p)-1,0\right\}\leq d(d-1).\]
\end{lemma}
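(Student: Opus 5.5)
Since $\mu_g^{\textup{hor}}(p)$ measures how $g$ vanishes along horizontal lines, the relevant auxiliary polynomial is $\partial_x g$. Over $\FF[t]$, if $t^m\mid g(p_x+t,p_y)$ with $m\geq 1$, then $t^{m-1}\mid \partial_t g(p_x+t,p_y)=(\partial_xg)(p_x+t,p_y)$. Hence $\mu^{\textup{hor}}_{\partial_x g}(p)\geq \mu_g^{\textup{hor}}(p)-1$, and every point with $\mu_g^{\textup{hor}}(p)\geq 2$ lies in the common zero locus of $g$ and $\partial_x g$. The plan is to bound the sum of $\mu_g^{\textup{hor}}(p)-1$ by the intersection multiplicity of $g$ and $\partial_xg$ at $p$, and then apply the classical B\'ezout theorem, which gives total intersection at most $\deg g\cdot\deg \partial_xg\leq d(d-1)$.

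First we show that $g$ and $\partial_x g$ are coprime. Let $h$ be an irreducible factor of $g$ and write $g=hq$. Since $g$ is square-free, $h\nmid q$, so $h\mid \partial_x g=h\partial_xq+q\partial_xh$ forces $h\mid \partial_xh$. As $\deg \partial_x h<\deg h$, this gives $\partial_x h\equiv 0$. Because $\deg_x h\leq d<\fp$, the same coefficient argument as in \cref{lemma:mx-descend-poly} shows that $h$ depends only on $y$. This is excluded by hypothesis. It follows that the common zero locus is finite and B\'ezout applies: $\sum_p I_p(g,\partial_xg)\leq d(d-1)$.

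Next we prove the local inequality $I_p(g,\partial_xg)\geq \mu^{\textup{hor}}_g(p)-1$ at each point $p$ with $\mu^{\textup{hor}}_g(p)\geq 2$. This is the main technical step. The finiteness of $\mu^{\textup{hor}}_g(p)$ is automatic: an infinite value would mean the horizontal line $y=p_y$ lies in the zero locus of $g$, so $y-p_y$ would be a factor of $g$ depending only on $y$. To prove the inequality, translate $p$ to the origin and set $m=\mu^{\textup{hor}}_g(p)$. Then $g(t,0)=ct^m+\cdots$ with $c\neq 0$, and $\partial_xg(t,0)=mct^{m-1}+\cdots$ with $mc\neq 0$, since $m\leq d<\fp$. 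Intersection multiplicity is bounded below by the order of vanishing of $\partial_x g$ restricted to a curve germ. Concretely, I would work in the local ring $\cO_p/(g,\partial_xg)$ and compare it with restriction to the line $y=0$. In fact the cleanest route is the standard inequality $I_p(F,G)\geq \operatorname{mult}_p(F)\operatorname{mult}_p(G)$, which is too weak, so instead I would use a different formulation. Using $I_p(g,\partial_x g)=\dim \cO_p/(g,\partial_xg)$ and the fact that $g$ is not divisible by $y$, we have
\[I_p(g,\partial_xg)=\sum_{\text{branches }\gamma\text{ of }g\text{ at }p}\operatorname{ord}_\gamma(\partial_xg).\]
For a branch parametrized by $(x(\tau),y(\tau))$, the branches together meet the line $y=0$ with total order $m$, that is, $\sum_\gamma \operatorname{ord}_\tau y(\tau)=m$. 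Differentiating $g(x(\tau),y(\tau))=0$ gives $\partial_xg\cdot x'+\partial_yg\cdot y'=0$, so $\operatorname{ord}(\partial_xg)+\operatorname{ord}x'=\operatorname{ord}(\partial_yg)+\operatorname{ord}y'$. When $\operatorname{ord}y'=\operatorname{ord}y-1$, which holds in characteristic larger than the degrees involved, this yields $\operatorname{ord}(\partial_xg)\geq \operatorname{ord}y-1$ on branches with $\operatorname{ord}x=1$.

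This branch analysis is where I expect the difficulty, since a singular branch need not have $\operatorname{ord}x=1$. As a fallback, I would avoid branches and use elimination instead: the resultant $\operatorname{Res}_x(g,\partial_xg)$, after a generic shear that keeps the horizontal direction fixed, is the discriminant of $g$ in $x$. This is a polynomial in $y$ of degree at most $d(d-1)$. Its order of vanishing at $y=p_y$ is at least the sum over $p$ on that horizontal line of $(\mu^{\textup{hor}}_g(p)-1)$, since a root of multiplicity $m$ of $g(\cdot,p_y)$ contributes at least $m-1$ to the discriminant's order. Discriminants vanish to order at least $m-1$ at a fiber with an $m$-fold root, which one checks by writing the discriminant as a product of differences of roots in a Puiseux or algebraic-closure sense, using $d<\fp$ to avoid inseparability. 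Summing over the distinct values of $p_y$ then gives the bound $d(d-1)$, after handling a leading coefficient in $x$ that vanishes by a projective change fixing horizontals.
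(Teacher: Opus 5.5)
Your coprimality argument and the overall plan (dominate $\max\{\mu^{\textup{hor}}_g(p)-1,0\}$ by the local intersection number $I_p(g,\partial_xg)$, then apply B\'ezout) coincide with the paper's. The gap is the local inequality $I_p(g,\partial_xg)\ge \mu^{\textup{hor}}_g(p)-1$. You correctly identify it as the crux but never prove it.

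Your branch computation only treats branches with $\mathrm{ord}_\gamma x=1$. Even for those, summing $\mathrm{ord}_\gamma(\partial_xg)\ge \mathrm{ord}_\gamma y-1$ over the branches through $p$ gives only $m$ minus the number of branches, not $m-1$, unless you also track how the other branches contribute to $\partial_xg$ along $\gamma$. The missing idea is elementary, and it is the one you mentioned and then dropped, namely comparing with the line $y=0$. Translate $p$ to the origin and set $m=\mu^{\textup{hor}}_g(p)$. The restrictions $g(x,0)$ and $(\partial_xg)(x,0)=\frac{d}{dx}g(x,0)$ are both divisible by $x^{m-1}$, so $g,\partial_xg\in\langle y,x^{m-1}\rangle$. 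Hence $\cO_p/\langle g,\partial_xg\rangle$ surjects onto $\cO_p/\langle y,x^{m-1}\rangle\cong\FF[x]/(x^{m-1})$, which has dimension $m-1$. This is exactly the paper's argument, phrased there as linear independence of $1,x,\ldots,x^{m-2}$ in the quotient. It uses nothing about the characteristic; $d<\fp$ is needed only for coprimality.

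Your discriminant fallback is a genuinely different route, but as written it relies on a reduction that cannot be performed. Any projective transformation sending horizontal lines to horizontal lines fixes their common point $[1:0:0]$. A shear $x\mapsto x+cy$ leaves the leading coefficient $a_n(y)$ of $g$ in $x$ unchanged. So you cannot arrange $a_n(y_0)\neq 0$; for example, take $g=xy-1$.

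The route can be rescued without any change of coordinates. Over an algebraic closure of $\FF((y-y_0))$ with valuation $v$, write $g=a_n\prod_{i=1}^n(x-\alpha_i)$, so that $\mathrm{Res}_x(g,\partial_xg)=\pm a_n^{2n-1}\prod_{i<j}(\alpha_i-\alpha_j)^2$. Multiplicativity of the Gauss valuation together with $(y-y_0)\nmid g$ gives $v(a_n)=\sum_{v(\alpha_i)<0}(-v(\alpha_i))$. This suffices to offset the negative valuations of all differences involving roots that escape to infinity.

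You would still have to verify three things:
\begin{enumerate}
\item $m$ roots specializing to the same $x_0$ contribute at least $m-1$. For example, each such root satisfies $v(\alpha_i-x_0)\ge 1/m$ since $m<\fp$, and there are $\binom{m}{2}$ such pairs, each counted twice.
\item $\deg_y\mathrm{Res}_x(g,\partial_xg)\le d(d-1)$ even when $\deg_xg<d$.
\item The resultant is nonzero.
\end{enumerate}
With these filled in, the fallback works, but it is considerably heavier than the one-line ideal containment above.
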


We first remark that the technical-looking assumption $\deg g<\fp$ is necessary.
For example, one can easily check that any zero of the polynomial $g(x,y) = x^{\fp}-y$ has horizontal multiplicity exactly $\fp$, and so the above sum becomes infinite.
This is why we need $d<\fp$ in \cref{thm:lewko-st-with-param}.

We also remark that this lemma follows from a fairly standard application of B\'ezout's theorem, and similar results may have been recorded in the literature especially for the characteristic zero case.
In \cite{Lew26}, the author deliberately avoided mentioning B\'ezout's theorem and replaced it with a parameter counting argument.

To prove \cref{lemma:bound-sum-mult}, the example in \cref{fig:curve} provides a good hint.
In that case, the curve is the graph of the function $y=3x^4-4x^3$, and points with multiplicities more than $1$ are exactly the critical points of the polynomial $3x^4-4x^3$.
They are thus roots of the derivative of $3x^4-4x^3$---moreover, the multiplicity of the derivative along the horizontal direction is exactly one less than the multiplicity of $3x^4-4x^3$ along the horizontal direction.
This strategy works in general so long as the ($x$-directional) derivative is coprime to the original polynomial.

\begin{proof}
    We first show that $\partial_x g$ is coprime to $g$.
    Suppose for the sake of contradiction that $h$ is an irreducible common factor between $\partial_x g$ and $g$.
    Then we may write $g=hq$ for some polynomial $q$, and $\partial_x g = h\partial_xq + q\partial_xh$.
    Since $g$ is square-free, this forces $h$ to share a common factor with $\partial_xh$.
    As $\partial_x h$ has strictly smaller degree, this forces $\partial_x h\equiv 0$.

    By our assumption on $g$, $\deg_x h\geq 1$.
    Moreover, we know $\deg_x h \leq \deg_x g < \fp$.
    Therefore $h$ must contain some term $cx^{\alpha}y^{\beta}$ with $1\leq \alpha < \fp$ and $c\neq 0$.
    Now $\partial_x h$ contains the term $c\alpha x^{\alpha-1}y^{\beta}$, which is nonzero.
    This is a contradiction.

    We now claim that the intersection multiplicity of $\partial_xg$ and $g$ at any point $p\in \FF^2$ is at least $\mu^{\textup{hor}}_g(p)-1$.
    Geometrically, this is because both the zero locus of $g$ and the zero locus of $\partial_x g$ are tangent to the horizontal line through $p$ to order $\mu^{\textup{hor}}_g(p)-1$.
    The claim for $\partial_xg$ can be verified easily after expanding the definition.
    Algebraically, the intersection multiplicity may be computed as
    \[\dim_{\FF}\cO_p/\langle g,\partial_xg\rangle\]
    where $\cO_p$ consists of rational functions $u/v$ where $v(p)\neq 0$.
    Note that when $g$ is expanded around $p$, it does not contain monomials of the form $(x-p_x)^i(y-p_y)^0$ with $i<\mu_g^{\textup{hor}}(p)$.
    Therefore when $\partial_xg$ is expanded around $p$, it does not contain monomials of the form $(x-p_x)^i(y-p_y)^0$ with $i<\mu_g^{\textup{hor}}(p)-1$.
    This shows that $(x-p_x)^0,\ldots, (x-p_x)^{\mu_g^{\textup{hor}}(p)-2}$ are linearly independent in the quotient, proving the claim.

    Now by B\'ezout's theorem, the intersection of the zero loci of $g$ and $\partial_xg$ contains at most $d(d-1)$ points counted with multiplicities.
    In other words,    
    \[\sum_{p\in \FF^2}\max\left\{\mu_g^{\textup{hor}}(p)-1,0\right\}\leq d(d-1).\qedhere\]
\end{proof}

\subsection{Choosing a nice slice}
In this subsection, we will show that we can choose a slice $s$ so that $f_s$ is nice in some sense.
Before we begin, we remark that work done in this subsection can be mostly circumvented by working over the field $\FF(s)$ where $s$ is now treated as a formal variable.
In fact, we will need to work over $\FF(s)$ in \cref{sec:main} when we modify Lewko's proof.
However, to make the argument more intuitive geometrically, we choose to proceed here by fixing a generic value for $s$ and showing that properties we need for $f_s$ still hold.

To apply \cref{lemma:bound-sum-mult} to bound the sum of $\mu_f(p,s)$, we would like $f_s$ to be square-free and contain no linear components in the $(1,s)$-direction.
The latter creates a bigger issue, which we deal with first.

The linear components of $f_s$ that do not contain any points from $\cP$ do not affect the multiplicities.
The real issue is when there is a linear component in the $(1,s)$-direction containing some point from $\cP$.
This is equivalent to $f$ vanishing identically on $\ell(p,s)$.
From this characterization, we see that this indeed happens for some values of $s$---indeed, any $s$ that appears as a slope for some line in $\cL$ passing through at least one point in $\cP$ would create some issues.
However, there are finitely many of them, and we can avoid them as we assume $\FF$ is infinite.

The more serious issue is that there might be $p\in \cP$ with $f$ vanishing identically on $\ell(p,s)$ for all $s\in \FF$.
In other words, $f$ vanishes on the regulus $R_p$ swept out by the lines $\{\ell(p,s):s\in \FF\}$.
This will forbid us from choosing any good $s$.
To deal with those exceptional points, let $\cP_{\textup{ex}}$ be the set of points $p\in \cP$ so that $f$ vanishes identically on $R_p$.
We will show that $\cP_{\textup{ex}}$ is small, which will allow us to deal with incidences involving them using some easy bounds.

\begin{lemma}\label{lemma:small-exceptional}
    Let $f\in \FF[x,y,s]$ be a nonzero polynomial. Let $\cP_{\textup{ex}}$ be a set of points in $\FF^2$ for which $f$ vanishes identically on $R_p$.
    Then $\abs{\cP_{\textup{ex}}}\leq \deg_sf$.
\end{lemma}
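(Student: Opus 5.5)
The plan is to show that vanishing on a regulus $R_p$ forces a specific factor of $s$-degree one to divide $f$. Then $\deg_s f$ counts these factors.

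For $p=(p_x,p_y)$, set
\[q_p(x,y,s)\coloneqq (y-p_y)-s(x-p_x).\]
The regulus $R_p=\bigcup_{s}\ell(p,s)$ is exactly the zero locus of $q_p$. Indeed, the point $(p_x+t,p_y+st,s)$ satisfies $q_p=0$, and conversely any zero $(x,y,s)$ of $q_p$ is of this form with $t=x-p_x$.

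\textbf{Step 1: divisibility.} Suppose $f$ vanishes identically on $R_p$. View $q_p$ as a monic polynomial of degree $1$ in $y$ over $\FF[x,s]$. Division with remainder in $y$ gives
\[f=q_p\,g+r(x,s),\qquad r(x,s)=f\bigl(x,\,p_y+s(x-p_x),\,s\bigr).\]
By assumption $r$ vanishes at every $(x,s)\in\FF^2$. Since $\FF$ is infinite (indeed algebraically closed), $r$ is the zero polynomial, so $q_p\mid f$. This avoids any appeal to the Nullstellensatz.

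\textbf{Step 2: irreducibility and distinctness.} Each $q_p$ has degree $1$ in $s$, and its two coefficients in $s$, namely $-(x-p_x)$ and $y-p_y$, are coprime in $\FF[x,y]$. Hence $q_p$ is irreducible in $\FF[x,y,s]$: any factorization would have to pull out a common factor of these two coefficients.

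For $p\neq p'$, the polynomials $q_p$ and $q_{p'}$ are not scalar multiples of each other, since their coefficients of $s$ differ. So they are non-associate irreducibles. By unique factorization in $\FF[x,y,s]$, the product $\prod_{p\in\cP_{\textup{ex}}}q_p$ divides $f$.

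\textbf{Step 3: degree count.} The $s$-degree is additive under multiplication: the leading coefficients in $s$ lie in the domain $\FF[x,y]$, so their product is nonzero. Therefore
\[\deg_s f\geq \deg_s\prod_{p\in\cP_{\textup{ex}}}q_p=\abs{\cP_{\textup{ex}}},\]
using $f\neq0$.

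The only step needing care is Step 1, turning vanishing on $R_p$ into divisibility. The division-in-$y$ trick handles it cleanly, so I expect no real obstacle.
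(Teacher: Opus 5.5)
Your proof is correct and follows the same route as the paper: the factors $(y-p_y)-s(x-p_x)$ are irreducible, pairwise non-associate and of $s$-degree one, so their product divides $f$ and comparing $s$-degrees gives the bound; your division in $y$ makes explicit the divisibility step the paper leaves implicit. One small slip in Step 2: when $p_x=p'_x$ the $s$-coefficients of $q_p$ and $q_{p'}$ are the same, so argue non-associateness instead by noting that both have $y$-coefficient $1$, which forces any scalar to be $1$, and $q_p=q_{p'}$ then gives $p=p'$.
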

The proof of the lemma is simple, but we would like to emphasize a subtle point.
If one just considers the total degree, then one may get $\abs{\cP_{\textup{ex}}}\leq \frac{1}{2}\deg f$, which might be significantly larger than $\deg_sf$ if $\deg_{x,y}f$ is much larger than $\deg_sf$.
Instead, we will only measure degrees in the slope component.

\begin{proof}
    We first note that the defining polynomial of $R_p$ for some $p=(p_x,p_y)\in \cP$ is $(y-p_y)-s(x-p_x)$, which is irreducible and has degree one in $s$.
    Moreover, distinct points give distinct defining polynomials.
    Therefore $f$ is divisible by 
    \[\prod_{p\in \cP_{\textup{ex}}}\left((y-p_y)-s(x-p_x)\right),\]
    which shows that $\abs{\cP_{\textup{ex}}}\leq \deg_sf$.    
\end{proof}

Except for points from $\cP_{\textup{ex}}$, we can now show that $f$ does not vanish identically on $\ell(p,s)$ for all but finitely many $s\in \FF$.

\begin{lemma}\label{lemma:good-slice-line}
    For any $p\in \cP\backslash \cP_{\textup{ex}}$, there are at most $2\deg f$ values of $s$ for which $f$ vanishes identically on $\ell(p,s)$.
\end{lemma}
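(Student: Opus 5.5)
Fix $p=(p_x,p_y)\in\cP\backslash\cP_{\textup{ex}}$ and consider, as in the proof of \cref{lemma:incidence-to-mult}, the polynomial
\[g(s,t)\coloneqq f(p_x+t,\,p_y+st,\,s)\in \FF[s,t].\]
By definition, $f$ vanishes identically on $\ell(p,s_0)$ exactly when $g(s_0,t)$ is the zero polynomial in $t$. Writing $g(s,t)=\sum_i c_i(s)t^i$ with $c_i\in\FF[s]$, this means $s_0$ is a common root of all the $c_i$. So the plan is to show that $g$ is not the zero polynomial, pick one nonzero coefficient $c_i$, and bound its number of roots by its degree.

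The first step is to show $g\not\equiv 0$, and this is where $p\notin\cP_{\textup{ex}}$ enters. The map $(s,t)\mapsto(p_x+t,p_y+st,s)$ parametrizes the regulus $R_p$ (minus the line $\ell_p$, which is the image of $t=0$, and this is harmless). If $g\equiv 0$, then $f$ vanishes at every point $(p_x+t,p_y+st,s)$ for all $s,t\in\FF$, which is exactly saying that $f$ vanishes identically on every $\ell(p,s)$, i.e.\ on $R_p$. This contradicts $p\notin\cP_{\textup{ex}}$. Equivalently, in algebraic terms: $g\equiv 0$ would mean $f$ is divisible by $(y-p_y)-s(x-p_x)$, since one can substitute $y=p_y+s(x-p_x)$ and $t=x-p_x$.

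The second step is the degree count. Since $g\not\equiv 0$, some $c_i\not\equiv 0$. Each monomial $x^\alpha y^\beta s^\gamma$ of $f$ becomes $(p_x+t)^\alpha(p_y+st)^\beta s^\gamma$, whose $s$-degree is at most $\beta+\gamma\le \deg f$. Hence $\deg c_i\le \deg f$ (the finer bound $\deg c_i\le e+i$ from before would also work, but the total degree is all we need). Therefore $c_i$ has at most $\deg f\le 2\deg f$ roots. Every bad $s_0$ is such a root, which gives the claim.

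No step is really an obstacle. The only point needing care is justifying that $g\equiv 0$ is equivalent to $f$ vanishing on all of $R_p$, which is just the definition of $R_p$ as the union of the lines $\ell(p,s)$. The slack factor $2$ in the statement suggests the author bounds the degree more crudely, for instance by $\deg_{x,y}f+\deg_s f\le 2\deg f$, and my bound is consistent with that.
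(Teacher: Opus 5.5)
Your argument is correct, but it takes a different route from the paper's. The paper argues geometrically. Since $f$ does not vanish identically on the quadric $R_p$, B\'ezout's theorem shows that $Z(f)\cap R_p$ is a curve of degree at most $2\deg f$. That curve therefore has at most $2\deg f$ linear components, and in particular contains at most $2\deg f$ of the lines $\ell(p,s)$. So the factor $2$ is the degree of $R_p$, not the $\deg_{x,y}f+\deg_sf$ slack you guessed.

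You instead pull $f$ back along $(s,t)\mapsto(p_x+t,p_y+st,s)$ and count the roots of a single nonzero coefficient $c_i(s)$. This is the same bookkeeping as in \cref{lemma:incidence-to-mult}. Your route is more elementary and self-contained: it needs no B\'ezout theorem for surfaces, only that $\FF$ is infinite so that vanishing on a line is a polynomial identity in $t$. It also gives the sharper bound $\deg f$, or even $\deg_sf+i$ for any $i$ with $c_i\not\equiv 0$. The paper's version is shorter and more conceptual. Either bound suffices, since the main proof only uses that the bad slopes are finitely many.

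One small inaccuracy: the map $(s,t)\mapsto(p_x+t,p_y+st,s)$ is a bijection onto the whole affine surface $(y-p_y)=s(x-p_x)$. This includes $\ell_p$, which is the image of $t=0$, so nothing is missing from the parametrization. This does not affect your argument, which only uses that $g\equiv 0$ exactly when $f$ vanishes on every $\ell(p,s)$.
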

\begin{proof}
    For any $s$ with $f$ vanishing identically on $\ell(p,s)$, we know that the line $\ell(p,s)$ lies in both $R_p$ and the vanishing locus of $f$.
    Since $f$ does not vanish identically on $R_p$, by B\'ezout's theorem, the vanishing locus of $f$ cuts $R_p$ into a $1$-dimensional algebraic set of degree at most $2\deg f$.
    In particular, there are at most $2\deg f$ linear components in the $1$-dimensional algebraic set.
    This shows that there can be at most $2\deg f$ values of $s$ for which $f$ vanishes identically on $\ell(p,s)$.
\end{proof}

Finally, we will show that $f_s$ is square-free for most $s\in \FF$ provided that $f$ is square-free.
The argument is similar to the one in \cref{lemma:bound-sum-mult} where we show that $g$ and $\partial_xg$ have no common factors.
\begin{lemma}\label{lemma:good-slice-square-free}
    Let $f\in \FF[x,y,s]$ be a square-free polynomial with $\deg_{x,y}f<\fp$.
    Then except for at most $(\deg f)^2$ values of $s$, the polynomial $f_s\in\FF[x,y]$ is square-free.
\end{lemma}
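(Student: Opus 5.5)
We want to show that $f_s$ fails to be square-free for at most $(\deg f)^2$ values of $s$. The plan mirrors the proof of \cref{lemma:bound-sum-mult}: compare $f$ with a derivative of itself, and use a resultant to catch the bad slices.

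First we choose a good derivative. Since $f$ is square-free, we split off the irreducible factors of $f$ that lie in $\FF[s]$. Their product $c(s)$ has degree at most $\deg f$, and it only affects the slices $s$ with $c(s)=0$; on any other slice it contributes a nonzero constant. Write $f=c(s)f'$, where no irreducible factor of $f'$ lies in $\FF[s]$.

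After a generic linear change of the spatial coordinates $x,y$ (fixing $s$), we may assume every irreducible factor $h$ of $f'$ has positive degree in $x$. This change preserves $\deg_{x,y}$, the total degree, and square-freeness of every slice. As in \cref{lemma:bound-sum-mult}, $\deg_x h\le\deg_{x,y}f<\fp$ gives $\partial_x h\neq0$. Then square-freeness of $f'$ shows that $f'$ and $\partial_x f'$ are coprime in $\FF[x,y,s]$.

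Next we pass to a resultant. Coprimality in $\FF(s)[x,y]$ allows one more generic linear change in $(x,y)$ over $\FF$, making $f'$ monic in $x$ up to a constant. We then take $R(y,s)=\operatorname{Res}_x(f',\partial_xf')$, a nonzero polynomial. By the standard degree bound for resultants, $\deg R\le \deg f'\cdot(\deg f'-1)$.

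Now suppose $f_s$ is not square-free for some $s$ with $c(s)\neq0$. Then $f'_s$ has a repeated factor $h_s^2$. This factor cannot consist only of constants in $x$ over the relevant slice, because monicity in $x$ is preserved when we specialize $s$. Hence $h_s$ divides $\partial_x f'_s$, and so $R(y,s)$ vanishes identically in $y$. We would like to conclude that $s$ is a root of every coefficient of $R$ viewed as a polynomial in $y$. We pick a nonzero coefficient $R_j(s)$, whose degree is at most $\deg R\le(\deg f')^2-\deg f'$. The number of bad $s$ is then at most $\deg c+\deg R_j\le \deg f-\deg f'+(\deg f')^2-\deg f'\le(\deg f)^2$.

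The main obstacle is the specialization step. Specializing the resultant commutes with setting $s$ to a value only because of the monicity normalization, and we must check that the generic coordinate changes exist over the algebraically closed (hence infinite) field $\FF$ without increasing degrees. If that becomes delicate, a fallback is to apply \cref{lemma:bihom-bezout}, or ordinary B\'ezout in $\FF^3$ to $f'$ and $\partial_xf'$. The curve $\{f'=\partial_xf'=0\}$ has degree at most $(\deg f)^2$, and each bad slice must contain a whole component of it lying in that slice. This bounds the number of bad $s$ by $(\deg f)^2$ directly.
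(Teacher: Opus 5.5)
Your resultant route is correct in substance, but the normalization step is overstated, and it is exactly the step you flagged. A linear change of $(x,y)$ over $\FF$ cannot in general make the $x$-leading coefficient of $f'$ a nonzero constant in $\FF$. For example, take $f'=(sx+y)(x+sy)$. After $x\mapsto \alpha x+\beta y$, $y\mapsto\gamma x+\delta y$, the coefficient of $x^2$ is $(\alpha s+\gamma)(\alpha+\gamma s)$, which lies in $\FF^\times$ only if $\alpha=\gamma=0$.

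What a generic change does give is $\deg_x f'=\deg_{x,y}f'=D$ with leading coefficient some nonzero $a(s)\in\FF[s]$. When $a(s_0)\neq0$, your argument goes through verbatim: no factor of $f'_{s_0}$ is free of $x$, and the resultant commutes with specialization. But your claim that monicity survives specialization fails at the roots of $a$. This is easy to repair in either of two ways:
\begin{enumerate}
\item The column of the Sylvester matrix of $(f',\partial_xf')$ containing the two leading coefficients is $(a(s),0,\dots,0,Da(s),0,\dots,0)^{T}$. So $R(y,s_0)\equiv0$ whenever $a(s_0)=0$, and these $s_0$ are already roots of $R_j$.
\item Alternatively, add $\deg a\le\deg f'$ to your count, which still gives at most $\deg c+(\deg f')^2\le(\deg f)^2$.
\end{enumerate}
Your resultant degree bound is fine even though $D$ may be smaller than $\deg f'$, since the coefficient of $x^i$ has degree at most $\deg f'-i$.

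Your fallback is essentially the paper's proof, and it is already rigorous without any monicity. If $f'_{s_0}=h^2q$ with $h$ nonconstant, then $h\mid(\partial_xf')_{s_0}$. So $V(h)\times\{s_0\}$ lies in $V(f')\cap V(\partial_xf')$. Because $f'$ and $\partial_xf'$ are coprime, this intersection is purely one-dimensional of degree at most $\deg f'(\deg f'-1)$. Hence each bad slice with $c(s_0)\neq0$ contains a whole component of it, and distinct slices contain distinct components.

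The paper avoids the coordinate change. With $f=g_1g_2$ (your $c(s)f'$), it uses the common zero locus of $g_2,\partial_xg_2,\partial_yg_2$ and bounds the degree of its one-dimensional part by $(\deg g_2)^2$. Your generic change of spatial coordinates amounts to differentiating along a generic direction $\alpha\partial_x+\gamma\partial_y$. That turns the paper's three-surface locus into an honest intersection of two surfaces.

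The resultant version buys a completely elementary argument: an explicit one-variable polynomial $R_j(s)$ that vanishes at every bad slice, with no intersection theory. The price is the normalization and the leading-coefficient bookkeeping above.
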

\begin{proof}
    Factor $f = g_1g_2$, where $g_1$ only depends on $s$ and $g_2$ has no factors that only depend on $s$.
    We first show that $g_2,\partial_xg_2,\partial_yg_2$ have no common factors.
    Otherwise, let $h$ be an irreducible common factor.
    As before, it is then clear that $h,\partial_xh,\partial_yh$ must share a common factor as $g_2$ is square-free, showing that $h\mid \partial_xh,\partial_yh$.
    This forces $\partial_xh\equiv \partial_yh\equiv 0$.
    However, as $h$ does not depend only on $s$, it must contain a term $cx^{\alpha}y^{\beta}s^{\gamma}$ with $c\neq 0$, $(\alpha,\beta)\neq (0,0)$ and $\alpha+\beta<\fp$.
    This forces one of $\partial_xh,\partial_yh$ to be nonzero, which is a contradiction.
    Therefore $g_2,\partial_xg_2,\partial_yg_2$ do not share common factors.

    B\'ezout's theorem then shows that the common zero locus $Z$ of $g_2,\partial_xg_2,\partial_yg_2$ has $1$-dimensional components of total degree at most $(\deg g_2)^2$.
    In particular, there are at most $(\deg g_2)^2$ slices that contain any such component entirely.
    Now we will show that if $g_1(s)\neq 0$ and none of the $1$-dimensional components of $Z$ lies entirely in the $s$-th slice, then $f_s$ is square-free.
    Once this is proven, we just need to upper bound
    \[\deg g_1+(\deg g_2)^2\leq (\deg g_1+\deg g_2)^2 =(\deg f)^2.\]
    Suppose now that $s$ is such that $g_1(s)\neq 0$ and none of the $1$-dimensional components of $Z$ is contained in the $s$-th slice.
    Then $f_s,\partial_xf_s,\partial_yf_s$ are nonzero constant multiples of $g_2,\partial_xg_2,\partial_yg_2$ within the slice, respectively, and there are only finitely many common zeroes by our assumption on $s$.
    As $\FF$ is algebraically closed, this means that $f_s,\partial_xf_s$ and $\partial_yf_s$ have no common factors.
    This shows that $f_s$ is square-free.
\end{proof}
\begin{remark}
    The condition $\deg_{x,y}f<\fp$ is again necessary, as shown by the polynomial $f = x^{\fp}-s$.
    As mentioned earlier, this technical lemma can be avoided entirely if we work over the field $\FF(s)$.
\end{remark}
\subsection{Finishing the proof}
We can now put every ingredient together to prove \cref{thm:lewko-st-with-param}.
\begin{proof}[Proof of \cref{thm:lewko-st-with-param}]
Let $e = \lfloor 2\abs{\cL}/d\rfloor$.
It is clear that $(\lfloor d\rfloor +2)(e+1)>d(e+1)>2\abs{\cL}$.
Lift $\cL$ up to $\Tilde{\cL}$ in $\FF^3$, and let $f$ be a polynomial given by \cref{lemma:param-counting} with parameters $(\lfloor d\rfloor,e)$.
We may assume that $f$ is square-free by removing any repeated factors.
Let $\cP_{\textup{ex}}$ be the set of points in $\cP$ for which $f$ vanishes identically on $R_p$.
For any $p\in \cP$, let $r(p)$ be the number of lines in $\cL$ through $p$.

We first bound $I(\cP\backslash \cP_{\textup{ex}},\cL)$.
By \cref{lemma:incidence-to-mult}, 
\[I(\cP\backslash \cP_{\textup{ex}},\cL)=\sum_{p\in \cP\backslash \cP_{\textup{ex}}}r(p)\leq (e+1)\left(\abs{\cP}-\abs{\cP_{\textup{ex}}}\right)+\sum_{p\in \cP\backslash \cP_{\textup{ex}}:\mu_f(p,s)>1}(\mu_f(p,s)-1)\]
for any $s\in \FF$. 
We now pick a good slice to bound the last sum.
By \cref{lemma:good-slice-line} and \cref{lemma:good-slice-square-free}, we may pick $s\in \FF$ so that $f_s$ is square-free and $f$ does not vanish identically on $\ell(p,s)$ for any $p\in\cP\backslash \cP_{\textup{ex}}$.
Fix this choice of $s$ for the rest of the argument.

By a change of coordinates in $x,y$, we may assume that $s=0$.
Then $\mu_f(p,s) = \mu^{\textup{hor}}_{f_s}(p)$ for any $p\in \cP\backslash \cP_{\textup{ex}}$.
Remove all horizontal linear components of $f_s$ to get $g$.
It is clear that $\mu_{g}^{\textup{hor}}(p) = \mu_{f_s}^{\textup{hor}}(p)$ for any $p\in \cP\backslash \cP_{\textup{ex}}$ as none of the horizontal lines contain any points from $\cP\backslash \cP_{\textup{ex}}$.
Therefore
\[\sum_{p\in \cP\backslash \cP_{\textup{ex}}:\mu_f(p,s)>1}(\mu_f(p,s)-1) = \sum_{p\in \cP\backslash \cP_{\textup{ex}}:\mu^{\textup{hor}}_{g}(p)>1}(\mu^{\textup{hor}}_{g}(p)-1)\leq d(d-1)\]
by \cref{lemma:bound-sum-mult} as $\deg g\leq \deg f_s\leq \deg_{x,y}f\leq d$.
This gives
\[I(\cP\backslash \cP_{\textup{ex}},\cL)\leq (e+1)\left(\abs{\cP}-\abs{\cP_{\textup{ex}}}\right)+d(d-1).\]

It remains to deal with the incidences between $\cP_{\textup{ex}}$ and $\cL$.
Here we use the simple combinatorial bound
\[I(\cP_{\textup{ex}},\cL)\leq \abs{\cL}+\binom{\abs{\cP_{\textup{ex}}}}{2}.\]
To prove it, simply notice that
\begin{align*}
    I(\cP_{\textup{ex}},\cL) =& \sum_{\ell\in \cL}\abs{\ell\cap \cP_{\textup{ex}}}\leq \sum_{\ell\in\cL}\left(1+\binom{\abs{\ell\cap \cP_{\textup{ex}}}}{2}\right)\\ =& \abs{\cL}+\sum_{\ell\in \cL}\abs{\left\{\{p,p'\}:p\neq p'\in \ell\cap \cP_{\textup{ex}}\right\}}\\
    \leq & \abs{\cL}+\binom{\abs{\cP_{\textup{ex}}}}{2}
\end{align*}
as two points determine a unique line. By \cref{lemma:small-exceptional}, we know that $\abs{\cP_{\textup{ex}}}\leq e$.
Therefore we can upper bound $\binom{\abs{\cP_{\textup{ex}}}}{2}$ loosely by $(e+1)\abs{\cP_{\textup{ex}}}$.

Combining the two bounds,
\[I(\cP,\cL)\leq I(\cP\backslash \cP_{\textup{ex}},\cL)+I(\cP_{\textup{ex}},\cL)\leq (e+1)\abs{\cP}+\abs{\cL}+d(d-1).\]
Since $e\leq 2\abs{\cL}/d$, this gives the desired inequality.
\end{proof}

\section{Improved Szemer\'edi--Trotter over arbitrary fields}\label{sec:main}
In this section, we show how Hasse derivatives can be used in both Lewko's and Miao and Xie's proofs to get some improvement.
Since Lewko's proof is the one that will lead to \cref{thm:main}, we will only show the full details of the modification of Lewko's proof and only mention the modification for Miao and Xie's proof briefly.
We also show the tightness of the bound in all regimes when $\FF$ is algebraically closed.

\subsection{Hasse derivative}
We begin by introducing Hasse derivatives and their properties.
For our purpose, we only need Hasse derivatives with respect to a single variable.
As we need to work over other fields later, we let $K$ be any field having the same characteristic as $\FF$ in this subsection.
\begin{definition}[Hasse derivative]
    On the ring of polynomials $K[x_1,\ldots, x_d]$, for any $n\in \ZZ_{\geq 0}$, the \emph{$n$-th Hasse derivative in $x_i$}, denoted by $\Hasse^n_{x_i}$, is a linear operator on $K[x_1,\ldots, x_d]$ so that
    \[f(x_1,\ldots, x_i+t,\ldots, x_d) = \sum_{n=0}^{\infty}(\Hasse^n_{x_i}f)(x_1,\ldots, x_d)t^n\]
    for any $f\in K[x_1,\ldots, x_d]$. In other words, $\Hasse^n_{x_i}f$ is the coefficient of $t^n$ in the expansion of $f$ when $x_i$ is replaced with $x_i+t$.
    We omit the index $x_i$ if it is clear from the context.
\end{definition}
It is easy to verify that $\Hasse^1_{x_i}$ is the same as the formal derivative $\partial_{x_i}$.
Indeed, by a direct expansion, we see that
\[\Hasse^1_{x_1} x_1^{\alpha_1}\cdots x_d^{\alpha_d} = [t^1](x_1+t)^{\alpha_1}x_2^{\alpha_2}\cdots x_d^{\alpha_d} = \alpha_1x_1^{\alpha_1-1}x_2^{\alpha_2}\cdots x_d^{\alpha_d} = \partial_{x_1}\left(x_1^{\alpha_1}\cdots x_d^{\alpha_d}\right),\]
showing that $\Hasse^1_{x_1}$ agrees with $\partial_{x_1}$ on all monomials.
As any polynomial is a linear combination of finitely many monomials, the two derivative operators agree on the entire polynomial ring.
In fact, the same expansion gives that $\partial^n_{x_i} = n!\Hasse^n_{x_i}$, whose proof is omitted as it will not be used.
This shows that over any field of characteristic zero, the higher-order Hasse derivatives can be thought of as partial derivatives of higher order up to a constant.
However, over a field of positive characteristic, we necessarily have $\partial^n_{x_i}f\equiv 0$ for any $f\in K[x_1,\ldots, x_d]$ whenever $n$ is at least the characteristic of the field.
On the other hand, $\Hasse^n_{x_i}$ need not be zero even if $n$ is larger than the characteristic of the field.
This will allow us to get rid of the constraint on the spatial degrees in both proofs at the expense of some additional loss.

\begin{example}\label{ex:why-it-breaks}
    Assume that $\fp<\infty$ for the discussion here.
    In \cref{lemma:mx-descend-poly}, the constraint $\deg_{x,y}h<\fp$ is needed so that when we take $g\in K[t,b,a]$ with $g(t,b,a) = h(t,at+b,a)$, we have that $\partial_tg\equiv 0$ implies $g$ does not depend on $t$.
    This no longer holds when $\deg_{x,y}h\geq \fp$, and as shown in the remark, $h(x,y,s) = (y^\fp-sx^{\fp})-(y-sx)$ shows that the statement can fail in this regime.
    For this choice of $h$, we have $g(t,b,a) = (at+b)^{\fp}-at^{\fp}-b = (a^{\fp}-a)t^p+(b^{\fp}-b)$.
    Note that $g$ does depend on $t$ even with $\partial_tg\equiv 0$.

    However, if we consider the $\fp$-th order Hasse derivative, then we have
    \[\Hasse^{\fp}_tg(t,b,a) = [(t')^{\fp}] (a^{\fp}-a)(t+t')^{\fp}+(b^{\fp}-b) = a^{\fp}-a,\]
    which is not a zero polynomial.
    Therefore, instead of just the formal derivative, we will consider various higher-order Hasse derivatives of $g$ with respect to $t$ to ensure that $g$ does not depend on $t$.

    \cref{lemma:bound-sum-mult} also needs the assumption that $\deg g<\fp$ for the same reason, and we will also fix it by considering higher-order Hasse derivatives.
\end{example}

To prove a generalization of \cref{lemma:bound-sum-mult} using Hasse derivatives, we need to understand how the multiplicity along the horizontal direction behaves.
We first reinterpret the multiplicity using Hasse derivatives.
\begin{proposition}\label{prop:Hasse-is-multi}
    Let $g\in K[x,y]$ be any polynomial and let $p\in K^2$ be any point.
    Then $\mu^{\textup{hor}}_{g}(p)$ is the least nonnegative integer $m$ so that
    \[\Hasse^m_xg(p)\neq 0.\]
    If such $m$ does not exist, then  $\mu^{\textup{hor}}_{g}(p)=\infty$.
\end{proposition}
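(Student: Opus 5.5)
The plan is to unwind both definitions and compare coefficients of a single univariate polynomial in $t$. By the definition of the Hasse derivative in $x$, we have the polynomial identity
\[g(x+t,y)=\sum_{n\geq 0}(\Hasse^n_xg)(x,y)\,t^n\]
in $K[x,y][t]$, where the sum is finite. We then specialize $(x,y)$ to $p=(p_x,p_y)$. Evaluation at $p$ is a ring homomorphism $K[x,y][t]\to K[t]$, so it commutes with this expansion. This gives
\[g(p_x+t,p_y)=\sum_{n\geq 0}(\Hasse^n_xg)(p)\,t^n\]
in $K[t]$. In other words, the coefficient of $t^n$ in $g(p_x+t,p_y)$ is exactly $\Hasse^n_xg(p)$.

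Recall that $\mu^{\textup{hor}}_g(p)$ is the largest $m$ with $t^m\mid g(p_x+t,p_y)$. For a univariate polynomial, $t^m$ divides it if and only if all coefficients of $t^0,\ldots,t^{m-1}$ vanish. So the largest such $m$ is the index of the first nonzero coefficient. By the previous paragraph, that index is the least $m$ with $\Hasse^m_xg(p)\neq 0$, which proves the main claim.

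If no such $m$ exists, then every coefficient of $g(p_x+t,p_y)$ vanishes. Hence it is the zero polynomial, and $\mu^{\textup{hor}}_g(p)=\infty$ by convention. Conversely, if it is the zero polynomial, then all the $\Hasse^m_xg(p)$ vanish, so the two cases match.

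The only point needing care is the specialization step: the defining identity of $\Hasse^n_x$ holds as polynomials in $x,y,t$, so substituting $x=p_x$, $y=p_y$ is legitimate. No characteristic assumption enters anywhere, which is the point of using Hasse derivatives rather than $\partial_x^n$. I expect no real obstacle here.
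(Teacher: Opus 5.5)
Your proof is correct and follows the paper's route: you specialize the defining expansion of the Hasse derivative at $p$, which identifies the coefficient of $t^n$ in $g(p_x+t,p_y)$ with $\Hasse^n_xg(p)$, and then read $\mu^{\textup{hor}}_g(p)$ off as the index of the first nonzero coefficient. Your remark that evaluation at $p$ is a ring homomorphism simply spells out a step the paper uses implicitly.
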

\begin{proof}
    Recall from the definition of Hasse derivative that
    \[g(p_x+t,p_y) = \sum_{n=0}^{\infty}\Hasse^n_{x}g(p)t^n.\]
    By the definition of $\mu^{\textup{hor}}_g(p)$, it is the largest nonnegative integer $m$ so that $t^m$ divides $g(p_x+t,p_y)$ as polynomials in $t$.
    Since $t^m\mid g(p_x+t,p_y)$ is equivalent to $\Hasse^n_xg(p)\equiv 0$ for every $0\leq n<m$, the desired statement follows.
\end{proof}

We also use the following simple proposition that describes the composition rule of Hasse derivatives.
\begin{proposition}\label{prop:Hasse-composition}
    Let $g\in K[x,y]$ be any polynomial and let $m,n$ be two nonnegative integers.
    Then
    \[\Hasse_x^m\Hasse_x^ng = \binom{m+n}{m}\Hasse^{m+n}_xg.\]
\end{proposition}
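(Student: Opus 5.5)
By linearity of both sides in $g$, it suffices to verify the identity on monomials $g = x^{\alpha}y^{\beta}$. Moreover, $y^\beta$ is a constant with respect to $\Hasse_x$, since the definition only substitutes $x\mapsto x+t$. So we reduce to checking $\Hasse^m\Hasse^n x^\alpha = \binom{m+n}{m}\Hasse^{m+n}x^\alpha$ in $K[x]$.

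The plan is to compute both sides explicitly from the definition. Expanding $(x+t)^\alpha$ by the binomial theorem gives $\Hasse^n x^\alpha = \binom{\alpha}{n}x^{\alpha-n}$, which is interpreted as $0$ when $n>\alpha$. Applying this twice yields
\[\Hasse^m\Hasse^n x^\alpha = \binom{\alpha}{n}\binom{\alpha-n}{m}x^{\alpha-m-n}.\]
The right-hand side is $\binom{m+n}{m}\binom{\alpha}{m+n}x^{\alpha-m-n}$. The two coefficients agree as integers by the standard identity
\[\binom{\alpha}{n}\binom{\alpha-n}{m} = \frac{\alpha!}{n!\,m!\,(\alpha-m-n)!} = \binom{\alpha}{m+n}\binom{m+n}{m},\]
and both sides vanish when $m+n>\alpha$. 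Integer identities remain valid after reduction to $K$, so no division by the characteristic ever occurs.

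Alternatively, one can avoid monomials altogether and use two formal variables. Substituting $x\mapsto x+t_1+t_2$ into $g$ and expanding in two stages shows that $g(x+t_1+t_2,y)=\sum_{m,n}(\Hasse^m\Hasse^n g)\,t_1^m t_2^n$. Expanding $(t_1+t_2)^{k}$ directly instead gives $g(x+t_1+t_2,y)=\sum_k \Hasse^k g\sum_{m+n=k}\binom{k}{m}t_1^m t_2^n$. Comparing the coefficient of $t_1^mt_2^n$ yields the claim.

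There is no real obstacle in either route. The only point needing care is to keep the binomial coefficients as integers mapped into $K$, rather than writing them with factorials in $K$, which would break down in positive characteristic.
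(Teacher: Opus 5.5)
Your proof is correct, and your second route is exactly the paper's argument: the paper identifies $\Hasse^m_x\Hasse^n_xg$ with the coefficient of $t_1^mt_2^n$ in $g(x+t_1+t_2,y)$ and then applies the binomial theorem to $(t_1+t_2)^{m+n}$. Your monomial computation is the same identity unpacked term by term, since $\binom{\alpha}{n}\binom{\alpha-n}{m}=\binom{\alpha}{m+n}\binom{m+n}{m}$ is just this coefficient comparison for $(x+t_1+t_2)^\alpha$, so either version suffices.
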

\begin{proof}
    This follows immediately from the binomial theorem.
    To see this, note that
    \begin{align*}
        \Hasse^m_x\Hasse^n_xg =& [t_1^m](\Hasse^n_xg)(x+t_1,y) = [t_1^mt_2^n]g(x+t_1+t_2,y)\\ =& \binom{m+n}{n}[t^{m+n}]g(x+t,y) = \binom{m+n}{n}\Hasse^{m+n}_xg,
    \end{align*}
    where the second-to-last equality holds by the binomial theorem.
\end{proof}

Combining the two propositions gives the following proposition.
\begin{proposition}\label{prop:Hasse-decrease-mult}
    Let $g\in K[x,y]$ be any polynomial and $p\in K^2$ be any point.
    For any $n\in \ZZ_{\geq 0}$, we have
    \[\mu_{\Hasse^n_xg}^{\textup{hor}}(p)\geq \mu_g^{\textup{hor}}(p)-n.\]
\end{proposition}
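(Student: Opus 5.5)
We combine \cref{prop:Hasse-is-multi} and \cref{prop:Hasse-composition}, as the text suggests. Set $\mu=\mu_g^{\textup{hor}}(p)$. If $\mu\leq n$, the right-hand side is at most $0$ and there is nothing to prove, since multiplicities are nonnegative. So assume $\mu>n$; this includes the case $\mu=\infty$.

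By \cref{prop:Hasse-is-multi}, it suffices to show that $\Hasse^m_x(\Hasse^n_x g)(p)=0$ for every integer $m$ with $0\leq m<\mu-n$. When $\mu=\infty$, this should hold for every $m\geq 0$, which gives $\mu^{\textup{hor}}_{\Hasse^n_xg}(p)=\infty$. Fix such an $m$. Then \cref{prop:Hasse-composition} gives the polynomial identity
\[\Hasse^m_x\Hasse^n_x g=\binom{m+n}{m}\Hasse^{m+n}_x g.\]
Evaluating both sides at $p$ gives
\[\Hasse^m_x(\Hasse^n_xg)(p)=\binom{m+n}{m}\Hasse^{m+n}_xg(p).\]
Since $m+n<\mu$, \cref{prop:Hasse-is-multi} (the minimality of $\mu$) shows that $\Hasse^{m+n}_xg(p)=0$. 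Hence the left side vanishes, whatever the binomial coefficient is in $K$.

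Therefore the least $m$ with $\Hasse^m_x(\Hasse^n_xg)(p)\neq 0$, if it exists, is at least $\mu-n$, which is the claim. There is no real obstacle here. The only point to watch is that the binomial coefficient may vanish in positive characteristic, but this only helps, since the inequality is one-sided. This is exactly why we obtain an inequality and not an equality, in contrast to the characteristic-zero intuition from \cref{fig:curve}.
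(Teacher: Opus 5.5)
Your proof is correct and is the same as the paper's. You use \cref{prop:Hasse-is-multi} to reduce to showing $\Hasse^m_x\Hasse^n_xg(p)=0$ for all $m<\mu_g^{\textup{hor}}(p)-n$, and then \cref{prop:Hasse-composition} rewrites this as $\binom{m+n}{m}\Hasse^{m+n}_xg(p)=0$; your explicit treatment of the cases $\mu_g^{\textup{hor}}(p)\leq n$ and $\mu_g^{\textup{hor}}(p)=\infty$ is a harmless addition that the paper leaves implicit.
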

\begin{proof}
    By \cref{prop:Hasse-is-multi}, it suffices to show that 
    \[\Hasse^i_x\Hasse^n_xg(p)=0\]
    for every $0\leq i < \mu_g^{\textup{hor}}(p)-n$.
    By \cref{prop:Hasse-composition} and \cref{prop:Hasse-is-multi}, we have that
    \[\Hasse^i_x\Hasse^n_xg(p)= \binom{i+n}{i}\Hasse^{n+i}_xg(p) = 0,\]
    as desired.
\end{proof}

We would also like to understand how Hasse derivatives behave when we factor a polynomial into factors.
This is answered by the following property.
\begin{proposition}\label{prop:Hasse-product-rule}
    Let $f_1,f_2$ be two polynomials in $K[x_1,\ldots, x_d]$ and let $n\in\NN$.
    Then
    \[\Hasse^n_{x_1}(f_1f_2) = \sum_{i=0}^{n}\left(\Hasse^i_{x_1}f_1\right)\left(\Hasse^{n-i}_{x_1}f_2\right).\]
    In general, if $f_1,\ldots, f_k$ are polynomials in $K[x_1,\ldots, x_d]$, then
    \[\Hasse^n_{x_1}(f_1\cdots f_k) = \sum_{n_1+\cdots+n_k=n}\prod_{i=1}^{k}\Hasse^{n_i}_{x_1}f_i.\]
\end{proposition}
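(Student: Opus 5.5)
We prove the two-factor identity directly from the definition of the Hasse derivative in $x_1$ as a coefficient extraction, and then obtain the $k$-factor version by induction on $k$.

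For the two-factor case, substitute $x_1+t$ for $x_1$ in the product. Since substitution is a ring homomorphism $K[x_1,\ldots,x_d]\to K[x_1,\ldots,x_d][t]$, we have
\[(f_1f_2)(x_1+t,x_2,\ldots,x_d) = f_1(x_1+t,x_2,\ldots,x_d)\,f_2(x_1+t,x_2,\ldots,x_d).\]
By the definition of the Hasse derivative, the two factors on the right expand as $\sum_{i\ge 0}(\Hasse^i_{x_1}f_1)t^i$ and $\sum_{j\ge 0}(\Hasse^j_{x_1}f_2)t^j$. Both sums are finite, since the polynomials are polynomial in $t$. Multiplying the two expansions and extracting the coefficient of $t^n$ gives exactly
\[\sum_{i=0}^{n}\left(\Hasse^i_{x_1}f_1\right)\left(\Hasse^{n-i}_{x_1}f_2\right).\]
By definition, the coefficient of $t^n$ on the left-hand side is $\Hasse^n_{x_1}(f_1f_2)$, which proves the first identity.

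For the general case, we can argue in either of two equivalent ways.

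\emph{Direct argument.} Substitution is multiplicative for any number of factors, so $(f_1\cdots f_k)(x_1+t,\ldots)$ equals the product of the $k$ expansions $\sum_{n_i}(\Hasse^{n_i}_{x_1}f_i)t^{n_i}$. The coefficient of $t^n$ in this product is $\sum_{n_1+\cdots+n_k=n}\prod_{i=1}^{k}\Hasse^{n_i}_{x_1}f_i$.

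\emph{Induction on $k$.} Write $f_1\cdots f_k=(f_1\cdots f_{k-1})f_k$ and apply the two-factor case. Then apply the inductive hypothesis to $\Hasse^{n-n_k}_{x_1}(f_1\cdots f_{k-1})$, and regroup the resulting sum over tuples $(n_1,\ldots,n_k)$ with $n_1+\cdots+n_k=n$.

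None of the steps is a genuine obstacle. The only point requiring care is that the definition of the Hasse derivative is an identity of polynomials in $t$ with coefficients in $K[x_1,\ldots,x_d]$, so comparing coefficients of $t^n$ is legitimate. No division by factorials or binomial coefficients occurs anywhere, so the argument is valid in every characteristic.
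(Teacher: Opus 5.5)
Your proof is correct and takes the same approach as the paper. You substitute $x_1+t$, use multiplicativity, and extract the coefficient of $t^n$. The $k$-factor case then follows either by induction or by the same coefficient extraction, which are the two options the paper also mentions.
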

\begin{proof}
    By definition
    \begin{align*}
        \Hasse^n_{x_1}(f_1f_2) =& [t^n]f_1(x_1+t,x_2,\ldots, x_d)f_2(x_1+t,x_2,\ldots,x_d)\\ =& \sum_{i=0}^{n}\left([t^i]f_1(x_1+t,x_2,\ldots, x_d)\right)\left([t^{n-i}]f_2(x_1+t,x_2,\ldots,x_d)\right)\\
        =& \sum_{i=0}^{n}\left(\Hasse^i_{x_1}f_1\right)\left(\Hasse^{n-i}_{x_1}f_2\right).
    \end{align*}
    The statement with multiple polynomials follows by induction.
    It also follows from the same argument.
\end{proof}

Using \cref{prop:Hasse-is-multi}, we also know how multiplicities behave when the polynomial is factored.

\begin{proposition}\label{prop:multi-product}
    Let $g_0,\ldots, g_{k-1}$ be polynomials in $K[x,y]$.
    Set $g = g_0\cdots g_{k-1}$.
    For any $p\in K^2$, we have
    \[\mu^{\textup{hor}}_g(p) = \sum_{i=0}^{k-1}\mu^{\textup{hor}}_{g_i}(p).\]
\end{proposition}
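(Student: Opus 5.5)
We prove the statement by reducing everything to univariate polynomials in $t$ and using that $t$-adic valuation is additive under multiplication. Fix $p=(p_x,p_y)\in K^2$ and set $G_i(t)\coloneqq g_i(p_x+t,p_y)\in K[t]$ for each $i$. Substitution is a ring homomorphism $K[x,y]\to K[t]$, so
\[g(p_x+t,p_y)=\prod_{i=0}^{k-1}G_i(t).\]
By definition, $\mu^{\textup{hor}}_{g_i}(p)$ is the $t$-adic valuation $v_t(G_i)$, the exponent of the largest power of $t$ dividing $G_i$, with the convention $v_t(0)=\infty$. Likewise $\mu^{\textup{hor}}_g(p)=v_t\bigl(\prod_i G_i\bigr)$.

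The claim is therefore the standard identity $v_t(AB)=v_t(A)+v_t(B)$ in $K[t]$, extended to $k$ factors by induction on $k$. To prove it, first suppose $A,B\neq 0$. Write $A=t^aA_0$ and $B=t^bB_0$ with $A_0(0)\neq 0$ and $B_0(0)\neq 0$. Then $AB=t^{a+b}A_0B_0$, and $(A_0B_0)(0)=A_0(0)B_0(0)\neq 0$ because $K$ is a field, hence an integral domain. So $v_t(AB)=a+b$.

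The remaining case is when some factor $G_i$ is the zero polynomial. Then $\mu^{\textup{hor}}_{g_i}(p)=\infty$ and the product is also zero, so both sides equal $\infty$. Conversely, if no $G_i$ is zero, the product is nonzero because $K[t]$ is an integral domain, and the finite case above applies.

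An alternative is to argue through \cref{prop:Hasse-is-multi} and \cref{prop:Hasse-product-rule}. Evaluate $\Hasse^n_x g(p)$ as a sum over $n_0+\cdots+n_{k-1}=n$ of products. Each term vanishes for $n<\sum_i \mu_i$, since some $n_i<\mu_i$ in every term. For $n=\sum_i\mu_i$, only the term with $n_i=\mu_i$ for all $i$ survives, and that term is nonzero. The univariate valuation argument above is cleaner, so that is the one we give.

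No step is a real obstacle. The only points needing care are the infinite-multiplicity convention and the use of the integral domain property.
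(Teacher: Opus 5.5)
Your argument is correct, and it takes a slightly different route from the paper. The paper's proof is exactly the alternative you sketch in your penultimate paragraph:
\begin{itemize}
\item it expands $\Hasse^n_x g(p)$ via \cref{prop:Hasse-product-rule};
\item it discards every term with some $n_i<\mu^{\textup{hor}}_{g_i}(p)$ using \cref{prop:Hasse-is-multi};
\item at $n=\sum_i\mu^{\textup{hor}}_{g_i}(p)$ it observes that only the term with all $n_i=\mu^{\textup{hor}}_{g_i}(p)$ survives, and that term is a product of nonzero field elements.
\end{itemize}

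The two arguments are the same computation in different language. Since $\Hasse^n_x g_i(p)=[t^n]G_i(t)$, the Hasse product rule evaluated at $p$ is just the Cauchy product formula for the coefficients of $\prod_i G_i(t)$. The lone surviving term is your $A_0(0)B_0(0)$, the product of lowest nonzero coefficients, iterated over the $k$ factors. Its nonvanishing is precisely your integral-domain step.

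Your version needs no Hasse machinery and makes the $\infty$ convention transparent: a zero factor forces a zero product, and nonzero factors give a nonzero product. The paper's version stays inside the Hasse-derivative framework it is building for \cref{lemma:modified-bound-sum-mult}. It also handles all $k$ factors in one stroke rather than by induction on $k$.
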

\begin{proof}
    Let $n\leq \sum_{i=0}^{k-1}\mu^{\textup{hor}}_{g_i}(p)$ be any nonnegative integer.
    Then
    \[\Hasse^n_{x}g(p) = \sum_{n_0+\cdots+n_{k-1}=n}\prod_{i=0}^{k-1}\Hasse^{n_i}_{x}g_i(p).\]
    Note that for each $(n_0,\ldots, n_{k-1})$ that sums to $n$, the summand is zero if there is some $i$ with $n_i<\mu^{\textup{hor}}_{g_i}(p)$ by \cref{prop:Hasse-is-multi}.
    Therefore we just need to keep the summands with $n_i\geq \mu^{\textup{hor}}_{g_i}(p)$ for all $i=0,\ldots,k-1$.
    This shows that $\Hasse^n_{x}g(p) = 0$ unless $n\geq\sum_{i=0}^{k-1}\mu^{\textup{hor}}_{g_i}(p)$.
    This already shows that $\mu^{\textup{hor}}_g(p) \geq \sum_{i=0}^{k-1}\mu^{\textup{hor}}_{g_i}(p)$.
    To show the reverse inequality, simply note that the only summand that survives for $n=\sum_{i=0}^{k-1}\mu^{\textup{hor}}_{g_i}(p)$, assuming it is finite, is when $n_i = \mu^{\textup{hor}}_{g_i}(p)<\infty$.
    Therefore
    \[\Hasse^n_{x}g(p) = \prod_{i=0}^{k-1}\Hasse^{n_i}_{x}g_i(p)\neq 0\]
    as each term in the product is nonzero by \cref{prop:Hasse-is-multi}.
\end{proof}

Finally, motivated by \cref{ex:why-it-breaks}, we would like to show that vanishing of certain Hasse derivatives implies that the polynomial does not depend on a variable.
This is done by the following proposition.

\begin{proposition}\label{prop:does-not-depend}
    Let $f\in K[x_1,\ldots, x_d]$ be a polynomial, and let $k$ be the smallest positive integer so that $\deg_{x_1} f<\fp^k$.
    If 
    \[\Hasse^{\fp^0}_{x_1}f=\cdots =\Hasse^{\fp^{k-1}}_{x_1}f\equiv 0,\]
    then $f$ does not depend on $x_1$.
\end{proposition}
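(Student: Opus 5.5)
We reduce to a statement about univariate monomials and then use Lucas' theorem. Expand $f$ in powers of $x_1$ with coefficients in $K[x_2,\ldots,x_d]$:
\[f=\sum_{a=0}^{\deg_{x_1}f}c_a x_1^a.\]
Since $\Hasse^n_{x_1}$ is linear and treats the other variables as constants, we have
\[\Hasse^n_{x_1}f=\sum_{a}\binom{a}{n}c_a x_1^{a-n}.\]
Here $\binom{a}{n}$ is read in $K$, i.e.\ modulo $\fp$. The monomials $x_1^{a-n}$ are distinct for distinct $a$. So $\Hasse^n_{x_1}f\equiv 0$ holds if and only if $\binom{a}{n}c_a=0$ in $K$ for every $a$. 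The hypothesis therefore says that for every $a$ with $c_a\neq 0$, and every $0\le j\le k-1$, we have $\binom{a}{\fp^j}\equiv 0\pmod{\fp}$. We must show that this forces $a=0$.

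In characteristic zero (so $\fp=\infty$, $\fp^0=1$) the hypothesis is just $\Hasse^1_{x_1}f=\partial_{x_1}f\equiv 0$. Then $a\,c_a=0$ with $a$ invertible for $a\ge 1$, so $c_a=0$ for all $a\ge1$.

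In positive characteristic, write $a$ in base $\fp$ as $a=\sum_{j\ge 0}a_j\fp^j$. Since $a\le\deg_{x_1}f<\fp^k$, only the digits $a_0,\ldots,a_{k-1}$ can be nonzero. By Lucas' theorem, $\binom{a}{\fp^j}\equiv a_j\pmod{\fp}$, since $\fp^j$ has a single digit $1$ in position $j$. Alternatively, without quoting Lucas, one can use $(x_1+t)^a=\prod_j (x_1^{\fp^j}+t^{\fp^j})^{a_j}$ (Frobenius) and read off the coefficient of $t^{\fp^j}$, which is $a_j x_1^{a-\fp^j}$ by uniqueness of base-$\fp$ digits.

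The vanishing conditions then give $a_j=0$ for every $0\le j\le k-1$, hence $a=0$. So $c_a=0$ for all $a\ge 1$, and $f=c_0$ does not depend on $x_1$.

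The only step needing care is the congruence $\binom{a}{\fp^j}\equiv a_j$. This is exactly where the bound $\deg_{x_1}f<\fp^k$ enters: it guarantees that no digit of $a$ beyond position $k-1$ can escape detection by the $k$ Hasse derivatives. The Frobenius expansion gives a self-contained proof of this congruence, so I would include it rather than cite Lucas' theorem.
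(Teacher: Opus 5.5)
Your proof is correct and is essentially the paper's argument: both expand $f$ in powers of $x_1$ and use Lucas's theorem to show that a nonzero base-$\fp$ digit in position $j<k$ of an exponent $a\le\deg_{x_1}f<\fp^k$ with $c_a\neq 0$ forces $\Hasse^{\fp^j}_{x_1}f\not\equiv 0$. The paper argues by contrapositive using only the lowest nonzero digit and the composition rule $\Hasse^{n_*-\fp^v}\Hasse^{\fp^v}=\binom{n_*}{\fp^v}\Hasse^{n_*}$ evaluated at $x_1=0$, whereas you read off the coefficient $\binom{a}{\fp^j}c_a$ of $x_1^{a-\fp^j}$ directly, and your Frobenius derivation of $\binom{a}{\fp^j}\equiv a_j$ is a valid self-contained substitute for citing Lucas.
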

\begin{proof}
    We will prove the contrapositive.
    Write $f = \sum_{n=0}^{\deg_{x_1}f}c_n(x_2,\ldots, x_d)x_1^n$.
    By the definition of Hasse derivative, we know that
    \[c_n(x_2,\ldots, x_d) = \Hasse^n_{x_1}f(0,x_2,\ldots, x_d)\]
    for any $n\in \ZZ_{\geq 0}$.
    
    Suppose that $f$ depends on $x_1$.
    Then there exists $n_*\in \NN$ upper bounded by $\deg_{x_1} f$ with $\Hasse^{n_*}_{x_1}f(0,x_2,\ldots, x_d)\not\equiv 0$.
    If $\fp=\infty$, set $v=0$.
    Otherwise, let $v$ be the largest nonnegative integer with $\fp^v\mid n_*$.
    It is clear that $v<k$.
    We may compute, using \cref{prop:Hasse-composition}, that
    \[\left(\Hasse^{n_*-\fp^v}_{x_1}\Hasse^{\fp^v}_{x_1}f\right)(0,x_2,\ldots,x_d) = \binom{n_*}{\fp^v}\Hasse_{x_1}^{n_*}f(0,x_2,\ldots, x_d)\neq 0\]
    as $\binom{n_*}{\fp^v}$ is nonzero: this is trivial if $\fp=\infty$, and this is a consequence of Lucas's theorem if $\fp<\infty$.
    As a consequence, we must have $\Hasse^{\fp^v}_{x_1}f\not\equiv 0$, as desired.
\end{proof}

\subsection{Proof of the improved bound}
With Hasse derivatives, we can now prove \cref{thm:main} using Lewko's framework.
The two lemmas that require bounds on the degrees of polynomials are \cref{lemma:bound-sum-mult} and \cref{lemma:good-slice-square-free}.
Unfortunately, \cref{lemma:good-slice-square-free} simply does not hold without the bound: when $\fp<\infty$, the polynomial $x^{\fp}-s$ is square-free, whereas for any fixed $a\in \FF$ we may write $x^{\fp}-a = (x-a^{1/\fp})^{\fp}$, which is not square-free in $\FF[x]$.
To fix this, we need to treat $s$ as an indeterminate and work over the field $\FF(s)$ instead.

We remark that working over the field $\FF(s)$ is morally the same as fixing $s$ to be a generic value in $\FF$.
They are most of the time equivalent, but unfortunately \cref{lemma:good-slice-square-free} witnesses a rare scenario where this equivalence fails.
Despite this, the readers may still find the geometric intuition in \cref{sec:lewko} helpful.

Let $K=\FF(s)$.
We can now generalize \cref{lemma:bound-sum-mult} to polynomials with higher degree while working over $K$.

\begin{lemma}\label{lemma:modified-bound-sum-mult}
    Let $g\in K[x,y]$ be a square-free polynomial with degree $d$.
    Let $k$ be the minimum positive integer so that $d<\fp^k$.
    Suppose that there are no factors of $g$ that only depend on $y$.
    Then
    \[\sum_{p\in K^2}\max\left\{\mu^{\textup{hor}}_g(p)-\left(1+\cdots+\fp^{k-1}\right),0\right\}\leq d(d-1).\]
\end{lemma}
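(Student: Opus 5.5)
The plan is to reduce to the irreducible factors of $g$. For each factor, \cref{prop:does-not-depend} provides a Hasse derivative of order a power of $\fp$ that is coprime to the factor. Then B\'ezout's theorem is applied both to each factor against its chosen derivative and to pairs of distinct factors against each other. The budget works out exactly: if $g=h_1\cdots h_r$ with $\deg h_i=d_i$, then
\[\sum_i d_i(d_i-1)+2\sum_{i<j}d_id_j=d^2-d=d(d-1).\]

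\emph{Step 1 (self-intersections).} Fix an irreducible factor $h$ of $g$. By hypothesis $h$ does not depend only on $y$, so it depends on $x$. Also $\deg_x h\le d<\fp^k$. Hence by \cref{prop:does-not-depend} there is some $0\le j\le k-1$ with $\Hasse^{\fp^j}_xh\not\equiv 0$; write $n_h=\fp^j\le \fp^{k-1}$. Since $\deg \Hasse^{n_h}_x h\le \deg h-n_h<\deg h$ and $h$ is irreducible, $h$ and $\Hasse^{n_h}_xh$ are coprime. Next I would run the argument of \cref{lemma:bound-sum-mult}. By \cref{prop:Hasse-decrease-mult}, when $\Hasse^{n_h}_xh$ is expanded around $p$ it has no monomials $(x-p_x)^i$ with $i<\mu^{\textup{hor}}_h(p)-n_h$. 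The same holds for $h$ with the bound $\mu^{\textup{hor}}_h(p)$. Hence $(x-p_x)^0,\dots,(x-p_x)^{\mu^{\textup{hor}}_h(p)-n_h-1}$ are independent in $\cO_p/\langle h,\Hasse^{n_h}_xh\rangle$, giving $I_p(h,\Hasse^{n_h}_xh)\ge(\mu^{\textup{hor}}_h(p)-n_h)^+$. B\'ezout over the algebraic closure of $K$ then gives
\[\sum_p(\mu^{\textup{hor}}_h(p)-n_h)^+\le d_h(d_h-1).\]

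\emph{Step 2 (cross-intersections).} For distinct factors $h_i,h_j$, which are coprime because $g$ is square-free, I claim that
\[I_p(h_i,h_j)\ge\min(\mu^{\textup{hor}}_{h_i}(p),\mu^{\textup{hor}}_{h_j}(p)).\]
Set $m=\min(\mu^{\textup{hor}}_{h_i}(p),\mu^{\textup{hor}}_{h_j}(p))$. Neither expansion around $p$ contains a pure $(x-p_x)^i$ monomial with $i<m$, so $1,(x-p_x),\dots,(x-p_x)^{m-1}$ are independent in the quotient, just as in Step 1. B\'ezout then gives $\sum_p I_p(h_i,h_j)\le d_id_j$.

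\emph{Step 3 (local inequality).} Write $N=1+\fp+\cdots+\fp^{k-1}$. Fix $p$ and let $h_1,\dots,h_r$ be the factors with $\mu_i:=\mu^{\textup{hor}}_{h_i}(p)\ge1$, sorted so that $\mu_1\le\cdots\le\mu_r$. These are finite, since no factor is a horizontal line because none depends only on $y$. By \cref{prop:multi-product}, $\mu^{\textup{hor}}_g(p)=\sum_i\mu_i$. Using $n_{h_r}\le\fp^{k-1}\le N$, we get
\[\sum_i\mu_i-N\le(\mu_r-n_{h_r})^++\sum_{i<r}\mu_i\le(\mu_r-n_{h_r})^++2\sum_{i<j}\min(\mu_i,\mu_j),\]
because each $\mu_i$ with $i<r$ equals $\min(\mu_i,\mu_r)$. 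Combining this with Steps 1 and 2 and summing over $p$ gives the bound $\sum_i d_i(d_i-1)+2\sum_{i<j}d_id_j=d(d-1)$.

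I expect the main care to be needed in Step 2 and in justifying intersection multiplicities over non-closed $K$. For the latter I would pass to $\bar K$, noting that points of $K^2$ are among the points of $\bar K^2$ and that the irreducible factors may then split. To avoid that issue, I would factor $g$ over $\bar K$ from the start. Over $\bar K$, square-freeness and the condition that no factor depends only on $y$ still need checking. A cleaner alternative is to keep the $K$-irreducible factors and apply B\'ezout over $\bar K$ only to coprime pairs, which is all the argument uses; I would take this route.
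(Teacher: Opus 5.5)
Your argument is correct, but it is organized differently from the paper's.

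\textbf{The paper's route.} It groups the irreducible factors $h$ of $g$ by the least $i_h$ with $\Hasse^{\fp^{i_h}}_x h\not\equiv 0$, forms the products $g_0,\ldots,g_{k-1}$, and proves that $g_i$ and $\Hasse^{\fp^i}_x g_i$ are coprime. This needs the product rule (\cref{prop:Hasse-product-rule}) together with a Lucas-theorem argument that $\Hasse^n_x h\equiv 0$ for $1\le n<\fp^i$, so that $\Hasse^{\fp^i}_x g_i=h\Hasse^{\fp^i}_x q+q\Hasse^{\fp^i}_x h$. B\'ezout is then applied once per group, and the pieces are combined through $\max\{\mu^{\textup{hor}}_g(p)-(1+\cdots+\fp^{k-1}),0\}\le\sum_i\max\{\mu^{\textup{hor}}_{g_i}(p)-\fp^i,0\}$. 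Interactions between factors in the same group are absorbed by B\'ezout on the product. Interactions between different groups are never needed, because each group pays its own $\fp^i$.

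\textbf{Your route.} You apply B\'ezout to each irreducible factor against its own Hasse derivative, where coprimality is immediate from irreducibility and the degree drop. You pay for interactions between distinct factors with pairwise B\'ezout and the tangency bound $I_p(h_i,h_j)\ge\min(\mu_i,\mu_j)$.

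\textbf{What each buys.} Your route avoids the product-rule/Lucas coprimality step. It also gives slightly more: at each point you only subtract $n_{h_r}\le\fp^{k-1}$. So the same computation shows the threshold $1+\cdots+\fp^{k-1}$ can be lowered to $\fp^{k-1}$, and the factor $2$ on the cross terms is not needed. This affects only constants in \cref{thm:main}. The paper's route needs only $k$ applications of B\'ezout and no local sorting inequality, and it stays closer to the single-derivative proof of \cref{lemma:bound-sum-mult}.

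\textbf{The field issue.} Keeping the $K$-irreducible factors rather than refactoring over $\bar K$ is the right choice. $K=\FF(s)$ is imperfect: for example, $x^{\fp}-s$ becomes a $\fp$-th power over $\bar K$. All you need is to transfer coprimality and the local lower bounds on intersection multiplicity to $\bar K$, a point the paper leaves implicit.
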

\begin{proof}
    In this proof, all Hasse derivatives are with respect to $x$.
    For any irreducible factor $h$ of $g$, by \cref{prop:does-not-depend}, there exists some $i=0,\ldots,k-1$ so that $\Hasse^{\fp^i}h\not\equiv 0$.
    Let $i_h$ be the smallest such $i$.
    For any $i=0,\ldots,k-1$, let $g_i$ be the product of all irreducible factors $h$ of $g$ with $i_h=i$.
    It is clear that $g=g_0\cdots g_{k-1}$.

    We now fix some $i=0,\ldots, k-1$.
    The key claim is that $g_i$ and $\Hasse^{\fp^i}g_i$ are coprime.
    To see that, suppose for the sake of contradiction that $h$ is an irreducible common factor of $g_i$ and $\Hasse^{\fp^i}g_i$.
    Write $g_i=hq$.
    Then by \cref{prop:Hasse-product-rule},
    \[\Hasse^{\fp^i}g_i = \sum_{n=0}^{\fp^i}\left(\Hasse^nh\right)\left(\Hasse^{\fp^i-n}q\right).\]
    
    To compute this sum, we claim the summand is zero whenever $1\leq n < \fp^i$.
    This statement is vacuously true when $\fp=\infty$ as $i=0$, so we assume $\fp<\infty$ in this paragraph.
    Set $v$ to be the largest nonnegative integer with $\fp^v\mid n$.
    Then $v<i$.
    By the fact that $i_h=i$, we then have
    \[0\equiv \Hasse^{n-\fp^v}\Hasse^{\fp^v} h = \binom{n}{\fp^v}\Hasse^nh,\]
    showing that $\Hasse^nh\equiv 0$ as $\binom{n}{\fp^v}\neq 0$ by Lucas's theorem.
    
    Using the claim, we have
    \[\Hasse^{\fp^i}g_i = h\Hasse^{\fp^i}q+q\Hasse^{\fp^i}h.\]
    This shows that $h\mid q\Hasse^{\fp^i}h$ and so $h\mid \Hasse^{\fp^i}h$ as $g$ is square-free.
    By the definition of Hasse derivative, it is easy to see that $\deg \Hasse^{\fp^i}h\leq \deg h-\fp^i<\deg h$.
    This forces $\Hasse^{\fp^i}h\equiv 0$, which is a contradiction to the definition of $g_i$.
    Therefore $g_i$ and $\Hasse^{\fp^i}g_i$ are indeed coprime.

    We may now apply B\'ezout's theorem with multiplicities to $g_i$ and $\Hasse^{\fp^i}g_i$.
    For any $p\in K^2$, by \cref{prop:Hasse-decrease-mult}, we know that $g_i$ and $\Hasse^{\fp^i}g_i$ both have multiplicities at $p$ along the horizontal direction lower bounded by $\max\{\mu^{\textup{hor}}_{g_i}(p)-\fp^i,0\}$.
    This shows that the intersection multiplicity of $g_i$ and $\Hasse^{\fp^i}g_i$ is lower bounded by the same quantity, and so B\'ezout's theorem implies
    \[\sum_{p\in K^2}\max\{\mu^{\textup{hor}}_{g_i}(p)-\fp^i,0\}\leq \deg g_i\deg \Hasse^{\fp^i}g_i \leq \deg g_i\left(\deg g_i-\fp^i\right).\]

    By \cref{prop:multi-product}, we deduce
    \begin{align*}
        &\sum_{p\in K^2}\max\left\{\mu^{\textup{hor}}_g(p)-\left(1+\cdots+\fp^{k-1}\right),0\right\}\\ \leq& \sum_{i=0}^{k-1}\sum_{p\in K^2}\max\{\mu^{\textup{hor}}_{g_i}(p)-\fp^i,0\} \\\leq& \sum_{i=0}^{k-1}\deg g_i\left(\deg g_i-\fp^i\right)\leq d(d-1),
    \end{align*}
    as desired.
\end{proof}

We can now prove \cref{thm:main} using \cref{lemma:modified-bound-sum-mult} and the ingredients in \cref{sec:lewko}.
As we are working over $K$ instead of fixing $s$, we will often treat polynomials in $\FF[x,y,s]$ as polynomials in $K[x,y]$.
Using the lemmas in \cref{sec:lewko} thus sometimes requires reformulations, and we will mention those when they show up.
\begin{proof}[Proof of \cref{thm:main}]
    Recall that we have a set of points $\cP$ and a set of lines $\cL$ in $\FF^2$, and we assume that $k$ is a positive integer such that $\fp^{2k-2}\leq \abs{\cL}<\fp^{2k}$.
    Let $c_k = 1+\cdots+\fp^{k-1}=\Theta(\fp^{k-1})$.
    We will show that for any real $1\leq d<\fp^k$, we have
    \begin{equation}\label{eq:modified-lewko-st-with-param}
        I(\cP,\cL)\leq c_k\abs{\cP}+\abs{\cL}+d(d-1)+\frac{2\abs{\cP}\abs{\cL}}{d},
    \end{equation}
    which immediately implies \cref{thm:main} with $d = \abs{\cP}^{1/3}\abs{\cL}^{1/3}$ if $\abs{\cP}^{1/3}\abs{\cL}^{1/3}<\fp^k$, or with $d$ arbitrarily close to $\fp^k$ otherwise.

    Take $e = \lfloor 2\abs{\cL}/d\rfloor$, and lift the lines in $\cL$ to $\Tilde{\cL}$ in $\FF^3$.
    As in the proof of \cref{thm:lewko-st-with-param}, we have $(\lfloor d\rfloor+2)(e+1)>2\abs{\cL}$, and so there is a polynomial $f$ given by \cref{lemma:param-counting} with parameters $(\lfloor d\rfloor,e)$ that vanishes identically on $\Tilde{\cL}$.
    By removing repeated factors, we may assume that $f$ is square-free.
    We still let $\cP_{\textup{ex}}$ be the set of points in $\cP$ for which $f$ vanishes identically on $R_p$.

    For any $p\in \cP$, let $r(p)$ be the number of lines in $\cL$ through $p$.
    Consider the embedding $\phi:\FF^2\to K^2$ sending $p=(p_x,p_y)$ to $(p_x,p_y-sp_x)$.
    Also set $g\in K[x,y]$ where $g(x,y) = f(x,y+sx,s)$.
    We claim that
    \[\mu^{\textup{hor}}_g(\phi(p))\geq r(p)-e\]
    for every $p\in \cP$.
    To see this, note that \cref{lemma:incidence-to-mult} shows that $f(p_x+t,p_y+st,s)$, when expressed as a polynomial in $K[t]$, satisfies that
    \[[t^i]f(p_x+t,p_y+st,s)=0\quad\forall 0\leq i < r(p)-e.\]
    Note that $f(p_x+t,p_y+st,s) = g(p_x+t,p_y-sp_x)$.
    Therefore the left hand side can be simply written as $\Hasse^i_xg(\phi(p))$, showing that $\mu^{\textup{hor}}_g(\phi(p))\geq r(p)-e$ by \cref{prop:Hasse-is-multi}.

    Now let $h$ be the product of factors of $g$ that only depend on $y$, and let $g' = g/h$.
    We claim that if $h(\phi(p))=0$, then $p\in \cP_{\textup{ex}}$.
    Indeed, if $h(\phi(p))=0$, then we know that $g(p_x+t,p_y-sp_x)=0$ holds for all $t\in \FF$.
    Therefore $f(p_x+t,p_y+st,s)=0$ holds for all $s,t\in \FF$, and so $p\in \cP_{\textup{ex}}$ by definition.
    As a consequence, by \cref{prop:multi-product}, we have $\mu^{\textup{hor}}_{g'}(\phi(p)) = \mu^{\textup{hor}}_g(\phi(p))\geq r(p)-e$ for every $p\in \cP\backslash \cP_{\textup{ex}}$.

    We are ready to bound $I(\cP\backslash \cP_{\textup{ex}},\cL)$.
    By the bound on $\mu^{\textup{hor}}_{g'}(\phi(p))$, we have
    \[I(\cP\backslash \cP_{\textup{ex}},\cL)=\sum_{p\in \cP\backslash \cP_{\textup{ex}}}r(p)\leq (e+c_k)\left(\abs{\cP}-\abs{\cP_{\textup{ex}}}\right)+\sum_{p\in \cP\backslash \cP_{\textup{ex}}}\max\left\{\mu^{\textup{hor}}_{g'}(\phi(p))-c_k,0\right\}.\]
    Note that $g'$ has no factors that depend only on $y$ and $\deg g'\leq \deg g\leq \deg_{x,y}f\leq d$.
    It is also square-free in $K[x,y]$ as $f$ is square-free in $\FF[x,y,s]$ and thus in $K[x,y]$.
    By \cref{lemma:modified-bound-sum-mult}, the second sum can be bounded by $d(d-1)$.
    Therefore
    \[I(\cP\backslash \cP_{\textup{ex}},\cL)=\sum_{p\in \cP\backslash \cP_{\textup{ex}}}r(p)\leq (e+c_k)\left(\abs{\cP}-\abs{\cP_{\textup{ex}}}\right)+d(d-1).\]
    Finally, we still have $I(\cP_{\textup{ex}},\cL)\leq (e+1)\abs{\cP_{\textup{ex}}}+\abs{\cL}$ from the proof of \cref{thm:lewko-st-with-param}.
    Summing up, we get
    \[I(\cP,\cL)\leq (e+c_k)\abs{\cP}+\abs{\cL}+d(d-1).\]
    Since $e\leq 2\abs{\cL}/d$, \cref{eq:modified-lewko-st-with-param} immediately follows, as desired.
\end{proof}

We now briefly discuss how Miao and Xie's proof can be modified.
Recall that in \cref{lemma:mx-descend-poly}, we have $h\in \FF[x,y,s]$ with $(\partial_x+s\partial_y)h\equiv 0$.
After setting $g(t,b,a) = h(t,at+b,a)$, we would like to say that $\partial_tg(t,b,a) = ((\partial_x+s\partial_y)h)(t,at+b,a)\equiv 0$ and so $g$ does not depend on $t$.
As discussed earlier, when $\deg_t g\geq \fp$ we need to replace this with \cref{prop:does-not-depend}.
Therefore we now need $\Hasse^{\fp^0}_tg=\cdots= \Hasse^{\fp^{k-1}}_tg\equiv 0$ as input, where $k$ is the smallest positive integer with $\fp^k>\deg_t g$.
To this end, let $D_ih(t,at+b,a) = \Hasse^{\fp^i}_tg(t,b,a)$; equivalently,
\[D_ih(x,y,s) = [t^{\fp^i}] h(x+t,y+st,s).\]
Now \cref{lemma:mx-descend-poly} can be generalized to any $h$ with $\deg_{x,y}h<\fp^k$ as long as $D_0h=\cdots= D_{k-1}h\equiv 0$.
\begin{lemma}\label{lemma:modified-mx-descend-poly}
    Let $h\in\FF[x,y,s]$ be a polynomial, and let $k$ be the smallest positive integer with $\deg_{x,y}h<\fp^k$.
    If $D_0h=\cdots=D_{k-1}h\equiv 0$, then $\cL(h)$ is precisely the zero locus of the polynomial $\underline{h}(a,b) \coloneqq h(0,b,a)$.
    Moreover, $\deg \underline{h}= \deg_s h$.
\end{lemma}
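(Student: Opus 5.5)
The plan is to mirror the proof of \cref{lemma:mx-descend-poly}, replacing the single formal derivative by the Hasse derivatives $\Hasse^{\fp^i}_t$ and invoking \cref{prop:does-not-depend} in place of the argument that $\partial_t g\equiv 0$ forces $g$ to be independent of $t$.

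First, define $g\in\FF[t,b,a]$ by $g(t,b,a)\coloneqq h(t,at+b,a)$. I will check that
\[\Hasse^n_t g(t,b,a) = \big([t'^n]\,h(x+t',y+st',s)\big)\big|_{(x,y,s)=(t,at+b,a)}\]
for every $n$. This holds because $g(t+t',b,a)=h(t+t',at+b+at',a)$, so the substitution $(x,y,s)\mapsto(t,at+b,a)$ commutes with extracting the coefficient of $t'^n$. In particular $\Hasse^{\fp^i}_t g(t,b,a)=(D_ih)(t,at+b,a)$, which is exactly the defining relation of $D_i$. Hence the hypothesis gives $\Hasse^{\fp^0}_tg=\cdots=\Hasse^{\fp^{k-1}}_tg\equiv 0$.

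Second, I will apply \cref{prop:does-not-depend} to $g$ with $x_1=t$. Each monomial $x^\alpha y^\beta s^\gamma$ of $h$ becomes $t^\alpha(at+b)^\beta a^\gamma$, so $\deg_t g\le\deg_{x,y}h<\fp^k$. Let $k'\le k$ be the smallest positive integer with $\deg_t g<\fp^{k'}$. Then the vanishing of the first $k'$ of these Hasse derivatives is among the hypotheses, and the proposition applies to give that $g$ does not depend on $t$. If $\fp=\infty$, then $k=1$ and only $\Hasse^1=\partial_t$ is involved, recovering the original argument.

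Third, the rest copies the original proof. Since $g(t,b,a)=g(0,b,a)=h(0,b,a)=\underline h(a,b)$, we get $(a,b)\in\cL(h)$ if and only if $\underline h(a,b)=0$. Moreover the identity $h(t,at+b,a)=\underline h(a,b)$ holds as polynomials, so substituting $t=x$, $a=s$, $b=y-sx$ yields $h(x,y,s)=\underline h(s,y-sx)$. It remains to show $\deg\underline h=\deg_s h$. Write $\underline h=\sum c_{uv}a^ub^v$. The monomial $s^u(y-sx)^v$ contributes $s^{u+v}y^0x^v$ in top $s$-degree, and distinct $(u,v)$ give distinct monomials $s^{u+v}x^v$. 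So there is no cancellation, and $\deg_s h=\max(u+v)=\deg\underline h$.

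The only real point to get right is the first step, namely that Hasse derivatives in $t$ commute with the substitution. I expect it to be straightforward.
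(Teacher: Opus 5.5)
Your proposal is correct and is essentially the argument the paper indicates. The paper sets $g(t,b,a)=h(t,at+b,a)$, identifies $\Hasse^{\fp^i}_tg$ with $(D_ih)(t,at+b,a)$, invokes \cref{prop:does-not-depend} in place of the formal-derivative step, and then finishes as in \cref{lemma:mx-descend-poly}; your handling of the possibly smaller $k'$ (since $\deg_t g\le\deg_{x,y}h$) and your explicit no-cancellation check for $\deg\underline h=\deg_s h$ fill in details the paper leaves implicit.
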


The main proof has to be modified accordingly.
Starting with a nonzero square-free polynomial $f$ that vanishes on all lines in $\Tilde{\cL_r}$, the original strategy is to use the fact that $Df$ also vanishes identically on the lines in $\Tilde{\cL_r}$ and to apply \cref{lemma:mx-improved-bezout} to an appropriate factor of $f$ and its derivative.
However, for some irreducible factor $h$ of $f$, it is now possible that $D_0h\equiv 0$ but $D_ih\not\equiv 0$ for some larger $i$.
To handle all possible situations, let $f_{\textup{ex}}$ be the product of irreducible factors $h$ of $f$ with $D_0h=\cdots=D_{k-1}h\equiv 0$; also for $i=0,\ldots, k-1$, let $f_i$ be the product of irreducible factors $h$ of $f$ with $D_0h=\cdots=D_{i-1}h\equiv 0$ but $D_ih\not\equiv 0$.
The lifted lines contained in $f_{\textup{ex}}$ can be dealt with using the exact same argument once \cref{lemma:mx-descend-poly} is replaced with \cref{lemma:modified-mx-descend-poly}.
On the other hand, using \cref{prop:Hasse-product-rule}, one can check that $f_i$ and $D_if_i$ are coprime.
Moreover, one can check that $\deg_{x,y}D_if_i\leq \deg_{x,y}f_i-\fp^i$ and $\deg_sD_if_i\leq \deg_sf_i+\fp^i$.
By applying \cref{lemma:mx-improved-bezout}, we see that the number of lifted lines contained in $f_i$ is at most
\[\deg_sf_i\left(\deg_{x,y}f_i-\fp^i\right)+\deg_{x,y}f_i\left(\deg_sf_i+\fp^i\right)\leq 2\deg_{x,y}f_i\deg_sf_i+\fp^i\deg_{x,y}f_i.\]
With the modification, one can prove the following variant of \cref{thm:mx-bound-rich-line}.
\begin{theorem}\label{thm:modified-mx-bound-rich-line}
    Let $\cP$ be a set of points in $\FF^2$, and let $k$ be a positive integer.
    Assume that $r$ is a positive integer satisfying $r\geq \max\left\{2,\frac{12\abs{\cP}}{\fp^k}\right\}$.
    Let $\cL_r$ be the set of $r$-rich lines for $\cP$.
    Then
    \[\abs{\cL_r}\lesssim \frac{\abs{\cP}^2}{r^3}+\frac{\fp^{k-1}\abs{\cP}}{r}.\]
\end{theorem}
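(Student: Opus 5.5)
We follow the proof of \cref{thm:mx-bound-rich-line} essentially verbatim, with the modifications already sketched above. First dispose of the case $r>2\abs{\cP}^{1/2}$: the combinatorial argument at the end of that proof gives $\abs{\cL_r}<2\abs{\cP}/r$, which is within the claimed bound since $\fp^{k-1}\geq 1$. So assume $r\leq 2\abs{\cP}^{1/2}$. Take the same parameters $e=\lceil 2\abs{\cP}/r^2\rceil$, $m=2e$, $d=rm-1$. \cref{lemma:mx-parameter-counting} and \cref{lemma:mx-vanishing-lemma} produce a nonzero square-free $f$ with $\deg_{x,y}f\leq d$ and $\deg_sf\leq e$ vanishing on every line of $\Tilde{\cL_r}$. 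As before, $r\leq 4\abs{\cP}/r$ and $r\geq 12\abs{\cP}/\fp^k$ give $d<12\abs{\cP}/r\leq \fp^k$. Hence every factor of $f$ has spatial degree below $\fp^k$, and the splitting index (the least $k'$ with $\deg_{x,y}<\fp^{k'}$) is at most $k$.

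Factor $f=f_{\textup{ex}}f_0\cdots f_{k-1}$ as described above. For $f_{\textup{ex}}$, apply \cref{lemma:modified-mx-descend-poly} to each irreducible factor $h$, noting that the hypothesis $D_0h=\cdots=D_{k-1}h\equiv 0$ covers the minimal index required there. This shows that the lines lifting into $f_{\textup{ex}}$ dualize into the zero set of a plane curve of degree $\deg_sf_{\textup{ex}}$. \cref{lemma:mx-exceptional-lines} then bounds their number by $\abs{\cP}\deg_sf_{\textup{ex}}/(r-1)$.

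For each $f_i$ we need two facts:
\begin{itemize}
\item $f_i$ and $D_if_i$ are coprime. This is the analogue of the key claim in \cref{lemma:modified-bound-sum-mult}, using \cref{prop:Hasse-product-rule}, Lucas's theorem, and $\deg_{x,y}D_ih<\deg_{x,y}h$ (note $D_i$ is the $\fp^i$-th Hasse derivative along the direction $(1,s,0)$).
\item $D_if_i$ still vanishes on each lifted line in the zero locus of $f_i$. This holds because $D_if_i$ restricted to $\Tilde\ell$ is the $\fp^i$-th Hasse derivative in $t$ of $f_i(t,at+b,a)\equiv 0$.
\end{itemize}
\cref{lemma:mx-improved-bezout} then gives at most $2\deg_{x,y}f_i\deg_sf_i+\fp^i\deg_{x,y}f_i\leq 2d\deg_sf_i+\fp^{k-1}d$ lines.

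Summing over the pieces,
\[\abs{\cL_r}\leq 2d\sum_i\deg_sf_i+k\fp^{k-1}d+\frac{\abs{\cP}\deg_sf_{\textup{ex}}}{r-1}\lesssim re^2+k\fp^{k-1}re.\]
The first term is $\lesssim \abs{\cP}^2/r^3+\abs{\cP}/r+r$, and $r\lesssim \abs{\cP}/r$ in this regime.

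The main obstacle is the factor $k$ in the second term. Using $\sum_{i<k}\fp^i\leq 2\fp^{k-1}$ removes it, but only if we avoid the crude estimate $\deg_{x,y}f_i\leq d$. The sharper accounting is $\sum_i\fp^i\deg_{x,y}f_i\leq \fp^{k-1}\sum_i\deg_{x,y}f_i\leq \fp^{k-1}d$, which holds because the degrees of the $f_i$ add up to at most $\deg_{x,y}f\leq d$. So the second term is in fact $\lesssim \fp^{k-1}re\lesssim \fp^{k-1}(\abs{\cP}/r+r)\lesssim \fp^{k-1}\abs{\cP}/r$, and $\abs{\cL_r}\lesssim \abs{\cP}^2/r^3+\fp^{k-1}\abs{\cP}/r$ as claimed.

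The remaining care is verifying the two degree bounds for $D_i$, namely $\deg_{x,y}D_if_i\leq \deg_{x,y}f_i-\fp^i$ and $\deg_sD_if_i\leq\deg_sf_i+\fp^i$. Both follow from expanding $h(x+t,y+st,s)$ and observing that in each monomial the power of $s$ is at most the power of $t$.
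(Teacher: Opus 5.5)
Your proposal is correct and follows the paper's sketched argument. You use the same parameters $e,m,d$ as in \cref{thm:mx-bound-rich-line} and the split $f=f_{\textup{ex}}f_0\cdots f_{k-1}$ by the first nonvanishing $D_i$, handle $f_{\textup{ex}}$ via \cref{lemma:modified-mx-descend-poly} and \cref{lemma:mx-exceptional-lines}, and apply \cref{lemma:mx-improved-bezout} to each coprime pair $(f_i,D_if_i)$. You also fill in details the paper leaves implicit: that $D_if_i$ vanishes on the lifted lines, and that $d<\fp^k$ lets the given $k$ serve for every factor.

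One small remark: the factor $k$ already disappears with the crude estimate $\deg_{x,y}f_i\le d$, since $\sum_{i<k}\fp^i\le 2\fp^{k-1}$. Your sharper accounting is valid, but the sentence claiming it is needed is inaccurate.
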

We remark that this bound on the number of rich lines matches what \cref{thm:main} (or more precisely \cref{eq:modified-lewko-st-with-param}) gives: indeed, we have
\[r\abs{\cL_r}\leq I(\cP,\cL_r)\leq \left(1+\cdots+\fp^{k-1}\right)\abs{\cP}+\abs{\cL_r}+3\abs{\cP}^{2/3}\abs{\cL_r}^{2/3}+\frac{2\abs{\cP}\abs{\cL_r}}{\fp^k},\]
showing that $\abs{\cL_r}\lesssim \frac{\abs{\cP}^2}{r^3}+\frac{\fp^{k-1}\abs{\cP}}{r}$ for every $r\geq \max\left\{2,\frac{4\abs{\cP}}{\fp^k}\right\}$.
Unfortunately, when we sum \cref{thm:modified-mx-bound-rich-line} up to get an incidence bound, the $\fp^{k-1}\abs{\cP}/r$ term may dominate in the entire range of $\frac{12{\abs{\cP}}}{\fp^k}\leq r< \frac{12\abs{\cP}}{\fp^{k-1}}$.
This term alone already sums up to $\left(\fp^{k-1}\log\fp \right)\abs{\cP}$ within that range.
Therefore summing \cref{thm:modified-mx-bound-rich-line} only gives a slightly weaker bound
\[I(\cP,\cL)\lesssim \left(\fp^{k-1}\log\fp \right)\abs{\cP}+\abs{\cL}+\abs{\cP}^{2/3}\abs{\cL}^{2/3}+\frac{\abs{\cP}\abs{\cL}}{\fp^k}.\]

\subsection{Tightness of the improved bound}\label{subsec:tightness}
In this subsection, we show that for any given $m,n\in \NN$ with $m\geq n$, if $\fp^{2(k-1)}\leq m<\fp^{2k}$, then we may find a set of at most $n$ points $\cP$ and at most $m$ lines $\cL$ in $\FF^2$ with
\[I(\cP,\cL)\gtrsim \fp^{k-1}n+m+m^{2/3}n^{2/3}+\frac{mn}{\fp^k}.\]

If $\fp^{k-1}n$ is the dominating term, then we know that $\fp<\infty$ and $k\geq 2$.
Note that $\FF_{\fp^{k-1}}^2$ has precisely $\fp^{2k-2}$ non-vertical lines.
Let $\cL$ be the union of $\lfloor m/\fp^{2k-2}\rfloor$ generic translations of the lines in $\FF_{\fp^{k-1}}^2$.
Let $\Tilde{\cP}$ be the union of the same $\lfloor m/\fp^{2k-2}\rfloor$ generic translations of the points in $\FF_{\fp^{k-1}}^2$.
Then $\abs{\Tilde{\cP}}=\abs{\cL}\leq m$ and each point in $\Tilde{\cP}$ is on precisely $\fp^{k-1}$ lines in $\cL$.
Let $\cP=\Tilde{\cP}$ if $n\geq \abs{\Tilde{\cP}}$, and otherwise take $\cP$ to be any $n$-subset of $\Tilde{\cP}$.
Since $\abs{\cL}\gtrsim m$, we know that $\abs{\cP}\gtrsim n$ and so $I(\cP,\cL)\gtrsim \fp^{k-1}n$, as desired.

If $m$ is the dominating term, we can simply take a point and $m$ lines through the point, showing that $I(\cP,\cL)=m$, as desired.

Now suppose that $m^{2/3}n^{2/3}$ is the dominating term.
More precisely, we assume $m^{2/3}n^{2/3}\geq 2m$, $m^{2/3}n^{2/3}\geq \fp^{k-1}n$ and $m^{2/3}n^{2/3}\geq \frac{2mn}{\fp^{k}}$.
We will follow the standard construction by Elekes \cite{Elekes02} (see \cite{SdZ17} for example), although we need to make a small modification to extend it to all regimes.
This modification was suggested by Dmitrii Zakharov and independently ChatGPT-6 Astra.

Pick any $\theta\in \FF\backslash \FF_{\fp^{k-1}}$.
For any positive integers $a,b$ with $ab< \fp/2$, we set
\[\cP = \left\{(x,y_1+y_2\theta): x\in [a], y_1\in \FF_{\fp^{k-1}},y_2\in[2ab]\right\}\]
and
\[\cL = \left\{\left\{(x,y)\in \FF^2:y=(c_1+c_2\theta)x+(d_1+d_2\theta)\right\}:c_1,d_1\in \FF_{\fp^{k-1}}, c_2\in [b], d_2\in [ab]\right\}.\]
We remark that if $k=1$, by $\FF_{\fp^0}$ we mean the set $\{0\}$.
Then we have $\abs{\cP} = 2\fp^{k-1}a^2b$ and $\abs{\cL} = \fp^{2k-2}ab^2$.
We will choose $a,b$ later so that $\abs{\cP},\abs{\cL}$ are bounded by $n,m$, respectively.
It is easy to see that each line in $\cL$ contains precisely $a$ points in $\cP$ as $x$ ranges through $[a]$.
Therefore
\[I(\cP,\cL)\geq \fp^{2(k-1)}a^2b^2\gtrsim \abs{\cP}^{2/3}\abs{\cL}^{2/3},\]
and it suffices to verify that $\abs{\cP}\gtrsim n$ and $\abs{\cL}\gtrsim m$.

Take $a=\left\lfloor\sqrt[3]{\frac{n^2}{4m}}\right\rfloor$ and $b=\left\lfloor\sqrt[3]{\frac{2m^2}{\fp^{3(k-1)}n}}\right\rfloor$.
First, we would like to verify that $a,b\geq 1$ and $ab<\fp/2$.
We have that $\frac{n^2}{m} = \frac{(m^{2/3}n^{2/3})^{3}}{m^{3}}\geq 8$, showing that $a\geq 1$.
We also have $\frac{m^2}{\fp^{3(k-1)}n} = \frac{(m^{2/3}n^{2/3})^3}{(\fp^{k-1}n)^3}\geq 1$, showing that $b\geq 1$.
Finally, if $\fp=\infty$ then we clearly have $ab<\fp/2$.
If $\fp<\infty$, we can compute $\frac{mn}{\fp^{3(k-1)}} = \frac{\fp^3\cdot (\fm n/\fp^k)^3}{(m^{2/3}n^{2/3})^{3}}\leq (\fp/2)^3$, showing that $ab<\fp/2$.
Therefore this choice of $a,b$ is valid.
Moreover, we have that 
\[\sqrt[3]{\frac{n^2}{4m}}\geq a\gtrsim \sqrt[3]{\frac{n^2}{m}}\textup{ and }\sqrt[3]{\frac{2m^2}{\fp^{3(k-1)}n}}\geq b\gtrsim \sqrt[3]{\frac{m^2}{\fp^{3(k-1)}n}}.\]
Therefore we can compute
\[n\geq 2\fp^{k-1}a^2b= \abs{\cP}\gtrsim n\textup{ and }m\geq \fp^{2k-2}ab^2=\abs{\cL}\gtrsim m,\]
as desired.

Finally, suppose that $\frac{mn}{\fp^k}$ is the dominating term.
In particular, we suppose that $\frac{mn}{\fp^k}\geq m$.
Then $\fp^k\leq n\leq m$ and so $m/\fp^k\geq 1$.
Let $a = \lfloor m/\fp^k\rfloor< \fp^k$ and let $A$ be any $a$-subset of $\FF_{\fp^k}$.
We can take 
\[\cL = \{\{(x,y)\in \FF^2:y=cx+d\}:c\in A, d\in \FF_{\fp^k}\},\]
which clearly satisfies $\abs{\cL}\leq m$.
Now take $\cP$ to be any $n$-subset of $\FF_{\fp^k}^2$, which is valid as $n\leq m\leq \fp^{2k}$.
Since any point in $\FF_{\fp^k}^2$ is contained in precisely $a$ lines in $\cL$, we have
\[I(\cP,\cL)\geq an \gtrsim \frac{mn}{\fp^k},\]
as desired.

\section{Concluding remarks}\label{sec:conclude}
\subsection{Point-line incidences over finite fields}
As shown in \cref{subsec:tightness}, \cref{thm:main} gives a tight upper bound on the number of incidences between points and lines in $\FF^2$ when $\FF$ contains the appropriate subfields.
This shows that when we fix $\FF = \FF_q$ for some $q=\fp^{\alpha}$ with $\alpha\geq 3$, the incidence bound might not be tight in all regimes.
Indeed, Vinh's bound \cite{Vinh11} shows that \cref{thm:main} is not always tight: it states that
\[\abs{I(\cP,\cL)-\frac{\abs{\cP}\abs{\cL}}{q}}\lesssim q^{1/2}\abs{\cP}^{1/2}\abs{\cL}^{1/2},\]
and so
\[I(\cP,\cL)\lesssim \frac{\abs{\cP}\abs{\cL}}{q}+q^{1/2}\abs{\cP}^{1/2}\abs{\cL}^{1/2}.\]
Now when $q=\fp^{\alpha}$ with $\alpha \geq 3$, let $k$ be a positive integer with $\alpha/2<k<\alpha$ and set $\abs{\cP}=\abs{\cL} = \fp^{2k}$.
Then Vinh's bound gives $I(\cP,\cL)\lesssim \fp^{4k-\alpha}+\fp^{2k+\frac{\alpha}{2}}$ while \cref{thm:main} gives $I(\cP,\cL)\lesssim \fp^{3k}$.
A simple computation shows that Vinh's bound is always better than \cref{thm:main} at least by a factor of $\fp^{1/2}$.
It would be interesting to see whether it is possible to get an incidence bound tight up to a constant even over $\FF_q$ with $q=\fp^{\alpha}$ and $\alpha \geq 3$.
\subsection{Replacing polynomial partitioning}
One of the proofs of the Szemer\'edi--Trotter theorem is to apply the Guth--Katz polynomial partitioning theorem (see \cite[Section 3]{KMS12}), and both proofs replace polynomial partitioning with the new argument that still uses polynomial methods but no longer uses topology of the underlying field.
It is natural to go through the list of problems where polynomial partitioning was used and ask if the new framework can be used to replace polynomial partitioning in their proofs.
The most natural candidate is the Guth--Katz point-line incidence theorem in $\RR^3$ \cite[Theorem 4.5]{GK15}, for which the polynomial partitioning technique was first devised and applied.

One can still lift the configuration from $\FF^3$ to $\FF^5$ by lifting any line of the form $(t,at+b,ct+d)$ to $(t,at+b,ct+d,a,c)$.
However, each point is now lifted to a $2$-plane instead of a line, and having many incidences is no longer sufficient to force vanishing on the entire $2$-plane.
This breaks \cref{lemma:incidence-to-mult}, which was crucial to Lewko's proof.
In general, it seems more difficult to generalize Lewko's proof to incidence problems where there is a $2$-parameter family of curves going through each point, whereas it is plausible that the method generalizes if there is only a $1$-parameter family of curves through each point.
On the other hand, it seems very plausible that Miao and Xie's proof could generalize to some extent.
\section*{Acknowledgment}
The author thanks Noah Kravitz, Olivine Silier, Jonathan Tidor, Dmitrii Zakharov and Daniel Zhu for conversations that eventually led to this note.
The author is particularly grateful to Jonathan Tidor for inspiring the diagrams in this note.

\bibliographystyle{plain}
\bibliography{bib.bib}
\end{document}